\newcommand{\wtxi}{%
\mspace{2mu}%
  \tilde{\mspace{-2mu}\rule{0pt}{1.48ex}\smash[t]{\xi}}%
}
\newcommand{\wteta}{%
\tilde{\eta}
}
\newcommand{\wtmu}{%
\mspace{2mu}%
  \tilde{\mspace{-2mu}\rule{0pt}{0.4ex}\smash[t]{\mu}}%
}
\newcommand{\wtu}{%
\mspace{2mu}%
  \tilde{\mspace{-1mu}\rule{0pt}{0.4ex}\smash[t]{u}}%
}
\newcommand{\dd}{\,\mathrm{d}}
\newcommand{\ddd}{\mathrm{d}}
\newcommand{\Id}{\mathrm{Id}}
\newcommand{\Tr}{\mathrm{Tr}}
\newcommand{\BM}{\mathrm{BM}} 
\newcommand{\one}{\mathbbm{1}}
\newcommand{\<}{\langle}
\newcommand{\?}{\rangle}
\newcommand{\into}{\hookrightarrow}
\newcommand{\law}{\mathtt{Law}}
\newcommand{\supp}{\mathrm{supp}}
\newcommand{\loc}{\mathrm{loc}}
\newcommand{\ceqq}{\coloneqq}
\newcommand{\eqqc}{\eqqcolon}
\newcommand{\col}{\colon}
\newcommand{\indep}{\perp\!\!\!\perp} 
\newcommand{\wien}{\mathrm{P}^{\infty}}
\newcommand{\as}{a.s.\ }
\newcommand{\aev}{a.e.\ }
\newcommand{\eg}{e.g.\ }
\newcommand{\ie}{i.e.\ }
\newcommand{\dotcup}{\,\dot{\cup}\,}
\newcommand{\eq}{({\scriptstyle\mathrm{SPDE}})}
\newcommand{\cond}{\mathrm{C}}
\theoremstyle{plain}
\newtheorem{theorem}{Theorem}[section]
\theoremstyle{remark}
\newtheorem{remark}[theorem]{Remark}
\newtheorem{example}[theorem]{Example}
\theoremstyle{plain}
\newtheorem{corollary}[theorem]{Corollary}
\newtheorem{lemma}[theorem]{Lemma}
\newtheorem{definition}[theorem]{Definition}
\newtheorem{assumption}[theorem]{Assumption}
\numberwithin{equation}{section}
\newcommand{\N}{\mathbb{N}}
\newcommand{\R}{\mathbb{R}}
\newcommand{\C}{\mathbb{C}}
\newcommand{\E}{\mathbb{E}}
\newcommand{\B}{\mathbb{B}}
\newcommand{\W}{\mathbb{W}}
\newcommand{\BB}{\mathcal{B}}
\newcommand{\LL}{\mathcal{L}} 
\newcommand{\F}{\mathcal{F}}
\newcommand{\G}{\mathcal{G}}
\renewcommand{\P}{\mathbb{P}}
\newcommand{\PP}{\mathcal{P}}
\newcommand{\A}{\mathcal{A}}
\renewcommand{\SS}{\mathcal{S}}
\newcommand{\CC}{\mathcal{C}}
\newcommand{\X}{X_{1/2}}
\newcommand{\com}{\angle_\CC}
\newcommand{\om}{\omega}
\newcommand{\Om}{\Omega}
\newcommand{\guy}{\gamma(U,Y)}
\newcommand{\guz}{\gamma(U,Z)}
\newcommand{\gluy}{\gamma(L^2(I_T;U),Y)}
\begin{document}

\author[E. S. Theewis]{Esm\'ee Theewis}
\address[E. S. Theewis]{Delft Institute of Applied Mathematics\\
Delft University of Technology \\ P.O. Box 5031\\ 2600 GA Delft\\The
Netherlands}
\email{e.s.theewis@tudelft.nl}

\thanks{The author is supported by the VICI subsidy VI.C.212.027 of the Netherlands Organisation for Scientific Research (NWO)}

\date{June 19, 2026}

\title[The Yamada--Watanabe--Engelbert theorem]{The Yamada--Watanabe--Engelbert theorem  \\ for SPDEs in Banach spaces} 

\keywords{Yamada--Watanabe theorem, Engelbert theorem, stochastic evolution equations, stochastic partial differential equations, cylindrical Brownian motion, stochastic integral}

\subjclass[2020]{Primary: 60H15, Secondary: 60H05}

\begin{abstract}
We give a unified proof of the Yamada--Watanabe--Engelbert theorem for various notions of solutions for SPDEs in Banach spaces with cylindrical Wiener noise. We use Kurtz' generalization of the theorems of Yamada, Watanabe and Engelbert. 
In addition, we deduce the classical Yamada--Watanabe theorem for SPDEs, with a slightly different notion of `unique strong solution' than that corresponding to the result of Kurtz. 

Our setting  includes analytically strong solutions, analytically weak solutions and mild solutions. Moreover, our approach offers flexibility with regard to the function spaces and integrability conditions that are chosen in the solution notion (and affect the meaning of existence and uniqueness). All results hold in Banach spaces which are either martingale type 2 or UMD. For analytically weak solutions, the results hold in arbitrary Banach spaces. 

In particular, our results extend the Yamada--Watanabe theorems of Ondrej\'at for mild solutions in 2-smooth Banach spaces, of R\"ockner et al.\ for the variational framework and  of Kunze for analytically weak solutions, and cover many new settings.   

As a tool, and of interest itself, we construct a measurable representation $I$ of the stochastic integral in a martingale type 2 or UMD Banach space, in the sense that for any stochastically integrable process $f$ and cylindrical Brownian motion $W$, we have $I(f(\om),W(\om),\law(f,W))=(\int_0^\cdot f\dd W)(\om)$ for almost every $\om$.  
\end{abstract}

\maketitle
\addtocontents{toc}{\protect\setcounter{tocdepth}{2}}

\tableofcontents

\section*{Introduction} 

The \emph{Yamada--Watanabe theorem} was originally proven by Yamada and Watanabe for stochastic differential equation (SDE) solutions in \cite[Cor.\ 1]{yamadawatanabe71}, 
and can be roughly summarized as:
\begin{align}\label{eq:YWclassical}
\text{weak existence $+$ pathwise uniqueness}\iff \text{ unique strong existence. }
\end{align}
A detailed proof can be found in  \cite[Th.\ 1.1 Chap.\ 4]{ikedawatanabe81}. 
In \eqref{eq:YWclassical}, `weak' is meant in a probabilistic sense, and `unique strong existence' is a comprehensive notion indicating that there exist probabilistically strong solutions, all uniquely determined by a single solution map $F$ with several measurability properties, mapping the initial data and the Brownian motion to the solution to the SDE. In particular,  `$\Leftarrow$' in the above is trivial and `$\Rightarrow$' is the useful  implication.  

Afterwards, it was observed that the proof would also extend  to infinite dimensional SPDE settings, first by Brze\'zniak and Gątarek in \cite[p.\ 212]{brzezniakgatarek99}. 
A first rigorous proof was provided by Ondrej\'at in his thesis \cite{ondrejat04},   
where the Yamada--Watanabe theorem was proved for mild solutions to SPDEs in 2-smooth Banach spaces. 
Other important contributions to SPDE settings were made by R\"ockner, Schmuland and Zhang \cite{rock08} (variational framework) and by Kunze \cite{kunze13} (analytically weak solutions), who all used the  proof strategy from SDEs as in \cite{ikedawatanabe81}.

Meanwhile, a reverse of the Yamada--Watanabe theorem was provided by Jacod \cite{jacod80} and by Engelbert \cite[Th.\ 2]{engelbert91}, leading to the equivalence result:
\begin{align}\label{eq:YWE}
\begin{split}
&\text{weak existence $+$ pathwise uniqueness}\\
&\iff \text{ existence of a strong solution $+$ joint weak uniqueness, } 
\end{split}
\end{align}
commonly referred to as the \emph{Yamada--Watanabe--Engelbert theorem}.
Here, the last line is much weaker than unique strong existence (see also \cite[Cor.\ p.\ 209]{engelbert91}).  Thus `$\Leftarrow$' provides new information compared to \eqref{eq:YWclassical} and can be used for proving pathwise uniqueness.  
A slight difference   is that in the classical Yamada--Watanabe result, the left-hand side of \eqref{eq:YWclassical} is assumed for all initial distributions, whereas in the Yamada--Watanabe--Engelbert result, the equivalence \eqref{eq:YWE} holds for a fixed initial distribution.  

A much more general  version of the  Yamada--Watanabe--Engelbert theorem \eqref{eq:YWE} was proved by Kurtz \cite[Prop.\ 3.14]{kurtz07}. The result applies to general stochastic equations with a convexity structure and a compatibility structure capturing adaptedness properties, and is suited for S(P)DEs. 
Subsequently, in \cite[p.\ 7, Th.\ 3.1, Th.\ 3.4]{kurtz14}, Kurtz extended this result by entirely dropping the convexity assumption and by treating more general compatibility structures, including larger classes of SDEs, \eg those with distribution-dependent coefficients, which were not included in the former version \cite[Ex.\ 2.14]{kurtz14}. 

In this paper, we use the results of \cite{kurtz07,kurtz14} to prove both the Yamada--Watanabe--Engelbert theorem (Theorem \ref{th:YW SPDE}) and the classical Yamada--Watanabe theorem (Corollary \ref{cor: single F}) for very general SPDE settings. 
To the best of our knowledge, with the exception of \cite{de_bouard19,hausenblas25}, all rigorous proofs of the Yamada--Watanabe theorem for SPDE settings have not been based on Kurtz' results, but have followed the classical approach of \cite{ikedawatanabe81}. Also the reverse, \ie `$\Leftarrow$'  of \eqref{eq:YWE}, has been proved mostly without the use of Kurtz' results (see \eg \cite[Th.\ 3.1]{rehmeier21}). Thus our approach is rather new. The advantage of using the results of Kurtz is that it reveals in a more direct way which structure is exactly needed of the equation and underlying spaces.

No examples of applications to SPDEs were given in \cite{kurtz07,kurtz14}. Still, Kurtz' result \cite[Prop.\ 3.14]{kurtz07} has been cited to deduce strong existence for various SPDEs \cite{rocknerzhuzhu14,rocknerzhuzhu17,flandoliluo20,flandoliluo21,flandoligaleatiluo21,tahraouicipriano24}. 
Our work in particular provides a detailed explanation of how Kurtz' result can be applied in such settings.  
When doing so, the main issues that need to be handled are the following:
\begin{itemize}
  \item The solution notion, including the actual stochastic equation that a solution needs to  satisfy,  
  has to be characterized by the joint distribution of the solution and the noise. 
  \item Suitable measurability conditions and (Polish) function spaces have to be chosen for the solution and the coefficients in the SPDE.
\end{itemize}
For SPDEs, these issues are more delicate than for SDEs. In fact, one of the crucial parts of Ondrej\'at's proof concerned the first issue 
\cite[Th.\ 6]{ondrejat04}. 
To solve the first issue in our setting, essentially without extra effort, we prove existence of a measurable representation for stochastic integrals in martingale type 2 or UMD Banach spaces, see Theorem \ref{th:stoch int rep type 2 or UMD}.  
That is, there exists a measurable map $I$ such that for any stochastically integrable (possibly discontinuous) process $f$ and cylindrical Brownian motion $W$, we have 
\[
I\big(f(\om),W(\om),\law(f,W)\big)=\Big(\int_0^\cdot f\dd W\Big)(\om) \quad\text{for almost every $\om$.}
\]   
Such representations are well known in finite dimensions, but as far as we know,  
the above has not been proved for infinite dimensions (in other proofs of Yamada--Watanabe results, one fixes $\law(f,W)$). The UMD case is especially interesting here, and a bit more involved.

The generality of the setting in this paper is such that our Yamada--Watanabe(--Engelbert) theorem can be applied to many different notions of solutions, in particular to analytically strong, analytically weak and mild solutions.   
Furthermore, our method is  flexible in several respects:  
\begin{itemize}
\item The solution and the coefficients in the equation take values in and are defined on arbitrary separable Banach spaces.      
\item The solutions may be defined on a bounded time interval $[0,T]$ or on $[0,\infty)$.    
\item The space $\B$ in which solutions are required to live (pointwise in $\om\in\Om$) may be chosen flexibly, as long as it is Polish and embeds continuously into some space $C([0,T];Z)$ or $C([0,\infty);Z)$, with $Z$ a separable Banach space.  In particular, (weighted) Sobolev spaces and more exotic spaces can be used in the choice of $\B$. Also, weak continuity and positivity conditions can be included (see Example \ref{ex:applic}). 
\item The coefficients in the SPDE are allowed to depend on time and on the whole path of the solution. No specific (\eg semilinear) structure is assumed.
\item Moment conditions (with respect to $\om$) can be added to the notion of solution.  
 \end{itemize} 
Some minor aspects are taken into account as well:   
 \begin{itemize}
  \item  Treating the cylindrical Brownian motion as $\R^\infty$-valued or as trace class noise in a larger Hilbert space through a Hilbert--Schmidt embedding (Remarks \ref{rem: mathbbW choice} and \ref{rem: mathbbW choice YW}).  
  \item Allowing for all filtered probability spaces or only those which are complete or satisfy the usual conditions (Remark \ref{rem:sol on completion}). 
  \end{itemize}

As a consequence, we cover the well-known Yamada--Watanabe results of Ondrej{\'a}t \cite{ondrejat04} (Example  \ref{ex:ondrejat setting}), R\"ockner, Schmuland and Zhang \cite{rock08} (Example \ref{ex:variational}) and Kunze \cite{kunze13} (Example \ref{ex:kunze setting}). 
More importantly, we obtain the Yamada--Watanabe--Engelbert theorem for various classes of solutions to SPDEs for which the  result was not available yet, such as:
\begin{enumerate}[label=\alph*),ref=\alph*)]
  \item\label{it:intro1} Analytically weak solutions in Banach spaces that are neither UMD nor of M-type 2, without structural assumptions on the SPDE.  
  \item\label{it:intro2} Mild solutions to SPDEs with time-dependent differential operators. 
    \item \label{it:intro3} 
    Quasilinear SPDEs that cannot be treated in a variational setting. Examples are those in    \cite{agrestisauerbrey24, bechtelveraar23}. The framework of the present paper was recently applied in \cite{BT} to extend the weak existence result of \cite[Th.\ 6.6]{bechtelveraar23} to strong existence. 
  \item\label{it:intro4} Semilinear SPDEs in critical spaces. Examples are the following equations in critical spaces with Gaussian transport noise, in an $L^p(L^q)$-framework ($p>2$):  
      Allen-Cahn \cite[\S 7.1, (7.12)]{AV22nonlinear1}, Navier-Stokes \cite{AV_NS24}, Cahn-Hilliard \cite[\S 7.3, (7.24)]{AV22nonlinear1}, reaction-diffusion equations \cite{AV23reacdiff}. 
\end{enumerate}
The above examples do not fit in \cite{ondrejat04,rock08,kunze13} for different reasons. For \ref{it:intro2}, one has to use evolution families instead of a semigroup, which had not been treated yet.  Concerning \ref{it:intro1} and \ref{it:intro3}: the settings of  \cite[p.\ 8, (0.2)]{ondrejat04} and \cite[Hyp.\ 4.1, Def.\ 3.3]{kunze13} are well-suited for semilinear equations, but not for quasilinear equations, due to the structural assumptions. For \ref{it:intro4}, note that  the variational setting of \cite{rock08} does not apply since $p>2$. 
Moreover, the reason that \cite{kunze13} cannot be applied with critical spaces is that the smallest space in which solutions are continuous is a real interpolation space $(X_0,X_1)_{1-\frac{1+\kappa}{p},p}$. Now, the coefficients in the equations are not bounded on bounded subsets of this interpolation space. Therefore, the space $E$ of \cite[Hyp.\  3.1, Def.\ 3.3]{kunze13}, in which solutions are continuous, cannot be chosen suitably. Similarly, the underlying spaces cannot be chosen in a way fitting  \cite{ondrejat04}, here due to the transport noise. 
The  restrictions are overcome in our setting by the flexibility of  Assumption \ref{ass}.

\subsection*{Overview of the paper}
Section \ref{sec:prelim} consists of preliminaries. In Subsection \ref{sub: YW Kurtz}, we revisit Kurtz' version of the Yamada--Watanabe--Engelbert theorem and provide some extra probabilistic details for the more analysis-oriented reader. In Subsection \ref{sub:stoch integration}, some relevant aspects of stochastic integration in Banach spaces are discussed.  In Section \ref{sec:setting}, we explain the settings and our solution notions for SPDEs (Definition \ref{def: SPDE C-sol}), and provide examples of settings that fit into the framework (Example \ref{ex:applic}).  In Section \ref{sec:YW SPDE}, we prove existence of a measurable representation of the stochastic integral (Theorem \ref{th:stoch int rep type 2 or UMD}). This will then be used to  prove the Yamada--Watanabe--Engelbert theorem (Theorem \ref{th:YW SPDE}) for SPDEs. Lastly, we prove the classical Yamada--Watanabe theorem with a stronger notion of `unique strong solution' (Corollary \ref{cor: single F}). 

\vspace{.4cm}
During the final phase of the completion of this manuscript, it came to our attention that related results have been obtained simultaneously and independently by Fahim, Hausenblas and Karlsen \cite{hausenblas25}. Their setting generalizes the Yamada--Watanabe--Engelbert theorem in a different direction: by including noise coming from a Poisson random measure, besides a Wiener process. On the other hand, the framework is only variational, with  coefficients defined on Hilbert and reflexive Banach spaces that are part of a Gelfand triple. 
In contrast, our work focuses on Gaussian noise and provides a Yamada--Watanabe(--Engelbert) theorem in Banach spaces,  for various solution notions. The variational framework is a special case here. The coefficients may be Markovian as in \cite{hausenblas25} (see Lemma  \ref{lem:coeffs markov}), but may also depend on the whole time paths of the solution.

\subsubsection*{Acknowledgements}  
The author is grateful to Mark Veraar for many helpful discussions during the research process, and  thanks Jan van Neerven, Markus Kunze and the anonymous referees for their valuable comments on the manuscript. 
Lastly, discussions with Max Sauerbrey, Klaas Pieter Hart and Joris van Winden on related topics  were much appreciated, as was a short exchange with Martin Ondrej\'at on stochastic integral representations. 

\subsection*{Notation}\label{sub:notation}
The following notation will be used throughout the paper. 

For topological spaces $S$, $S'$ and $A,B\subset S$, we write
\begin{itemize}\setlength\itemsep{.3em}
\item $A \dotcup B$: union of $A$ and $B$ with $A\cap B=\varnothing$,
\item $S\into S'$: $S$ embeds continuously into $S'$,
\item $\BB(S)$:  Borel $\sigma$-algebra on $S$, 
\item $\PP(S)$: set of Borel probability measures on $S$,
\item $\Id_S$: identity map $S\to S$.
\end{itemize} 
Let $(S_1,{\A}_1,\mu_1)$,  $(S_2,\A_2,\mu_2)$  be $\sigma$-finite measure spaces. Let $\phi\col S_1\to S_2$ be measurable and let ${\tilde{\A}_1} $ be another $\sigma$-algebra on $S_1$. We use the following notations:
\begin{itemize}\setlength\itemsep{.3em}
\item $(S_1\times S_2,\mathcal{A}_1\otimes \mathcal{A}_2,\mu_1\otimes\mu_2)$:  product measure space,
  \item $\PP(S_1,\A_1)$: probability measures on $(S_1,\A_1)$,
\item $\A_1\vee\tilde{\A}_1   \ceqq \sigma(\A_1\cup \tilde{\A}_1 )$,
\item $\phi\#\mu_1\in\PP(S_2,\A_2)$: pushforward measure of $\mu_1$ under $\phi$,
\item $\BM(S_1,\A_1)$: bounded, measurable, real-valued functions $(S_1,\A_1)\to(\R,\BB(\R))$.
\end{itemize}
For a probability space  $(\Om,\F,\P)$  and random variables $\xi\col\Om\to S_1$, $\eta\col \Om\to S_2$, we write  
\begin{itemize}\setlength\itemsep{.3em} 
\item $\xi\indep \eta$: independent random variables, 
\item $\law(\xi)\ceqq \xi\#\P$ if the underlying probability measure $\P$ is clear,
\item $(\Om,\overline{\F},\overline{\P})$: completion of $(\Om,\F,\P)$.
\end{itemize}
If  $(\F_t)$ is a filtration on $(\Om,\F,\P)$, we let
\begin{itemize}
  \item $\overline{\F_t}$:   $\sigma$-algebra generated by $\F_t$ and all $(\Om,\F,\P)$-null sets.
\end{itemize}
For $T\in(0,\infty]$, we write
\begin{itemize}\setlength\itemsep{.3em}
\item $I_T\ceqq (0,T)$,
\item $\bar{I}_T\ceqq\overline{I_T}$,
\item $\R_+\ceqq [0,\infty)$. 
\end{itemize}

For a Hilbert space $U$ and a Banach space $Y$, we write 
\begin{itemize}\setlength\itemsep{.3em}
\item $\<\cdot,\cdot\?_U$: inner product of $U$, 
\item $\<\cdot,\cdot\?$: duality pairing between $Y$ and $Y^*$, 
  \item $\LL(U,Y)$: bounded linear operators from   $U$ to $Y$,  $\LL(U)\ceqq \LL(U,U)$, 
  \item $\gamma(U,Y)\subset \LL(U,Y)$: $\gamma$-radonifying operators.  
\end{itemize}  
We reserve the letter $k$ for indices in $\N\ceqq\{1,2,\ldots\}$ and the letter $t$ for indices in $\R_+$:
\begin{itemize}\setlength\itemsep{.3em}
  \item $(x_k)\ceqq(x_k)_{k\in\N}$,
  \item $(\F_t)\ceqq (\F_t)_{t\in\R_+}$.
\end{itemize}

\section{Preliminaries}\label{sec:prelim}

\subsection{The Yamada--Watanabe--Engelbert theorem by Kurtz}\label{sub: YW Kurtz}

To begin, we will discuss the Yamada--Watanabe--Engelbert theorem for $\CC$-compatible solutions to  stochastic equations given in \cite[\S2]{kurtz14}. In Section \ref{sec:YW SPDE}, it will be used to obtain a Yamada--Watanabe--Engelbert theorem for solutions to SPDEs. 

Throughout this paper, we call a map from a measurable space to a metric space $S$ a \emph{random variable} if it is measurable, \ie every inverse image of a Borel set is a measurable set. 

Within this section, we assume that $S_1$ and $S_2$ are arbitrary but fixed complete metric spaces and unless specified otherwise, we let them be equipped with their Borel $\sigma$-algebra.  

The following abstract definitions are identical to those in \cite{kurtz14}. In our later application, $\xi$ will correspond to an SPDE solution $u$  and $\eta=(u(0),W)$ will consist of the initial value and the noise. 

\begin{definition}\label{def:compatible} 
Let $\A$ be any index set. Let $\BB_\alpha^{S_i}$ be a sub-$\sigma$-algebra of $\BB(S_i)$ for $i=1,2$ and $\alpha\in\A$. 
We call $ \CC\ceqq\{(\BB_\alpha^{S_1},\BB_\alpha^{S_2}):\alpha\in\A\}$ a \emph{compatibility structure}. 

Let $\xi,\xi_1,\xi_2\col \Om\to S_1$ and $\eta\col \Om\to S_2$ be random variables. We define as in  \cite[p.\ 5]{kurtz14}: 
\[
\F_\alpha^\xi\ceqq   \overline{\sigma(\xi^{-1}(B):B\in\BB_\alpha^{S_1})},  
 \quad\F_\alpha^\eta\ceqq  \overline{\sigma(\eta^{-1}(B):B\in\BB_\alpha^{S_2})}.  
\]
We say that $\xi$ is \emph{$\CC$-compatible with $\eta$}, which we denote by `$\xi\com \eta$',  
  if for any $h\in \BM(S_2,\BB(S_2))$ and $\alpha\in\A$:
  \begin{equation}\label{eq:compatible}
  \E[h(\eta)|\F_\alpha^\xi\vee \F_\alpha^\eta]=\E[h(\eta)| \F_\alpha^\eta].
  \end{equation}
Furthermore, we say that $(\xi_1,\xi_2)$ is \emph{jointly $\CC$-compatible with $\eta$}, which we denote by  `$(\xi_1,\xi_2)\com \eta$', if for any $h\in\BM(S_2,\BB(S_2))$ and  $\alpha\in\A$:
\begin{equation}\label{eq:compatiblejoint}
\E[h(\eta)|\F_\alpha^{\xi_1}\vee\F_\alpha^{\xi_2}\vee \F_\alpha^\eta]=\E[h(\eta)| \F_\alpha^\eta].
\end{equation}
  
Let $\nu\in\PP(S_2)$ and $\Gamma\subset \PP(S_1\times S_2)$. For $i=1,2$, let $\pi_i\col S_1\times S_2\to S_i$ be the canonical projection. We define
\begin{align}\label{eq: def Kurtz set}
&\PP_\nu(S_1\times S_2)\ceqq\{\mu\in\PP(S_1\times S_2):\pi_2\#\mu=\nu\}, \notag\\ 
\begin{split}
&\SS_{\Gamma,\CC,\nu}\ceqq \{\mu\in \PP_\nu(S_1\times S_2)\cap \Gamma:\, \exists \text{ probability space }(\Om,\F,\P),\, \exists\text{ measurable }\xi\col\Om\to S_1\\
&\hspace{5cm} \text{ and }\eta\col\Om\to S_2,   \text{ such that }\xi \com \eta \text{ and }\mu=\law(\xi,\eta)\}.
\end{split}
\end{align} 
\end{definition} 

The compatibility conditions \eqref{eq:compatible} and \eqref{eq:compatiblejoint} allow us to capture adaptedness and independence properties, to be made precise in Lemmas \ref{lem:SPDE sol is compatible} and \ref{lem:joint compatible indep incr}. Furthermore,  $\Gamma$ will capture the specific stochastic equation and moment conditions required for the solution and coefficients. Let us stress that $\Gamma$ is allowed to be any subset of  $\PP(S_1\times S_2)$. 
 
\begin{definition}\label{def: kurtz solution}
Let $\Gamma,\CC$ and $\nu$ be as in Definition \ref{def:compatible}.
Let $\xi\col\Om\to S_1$ and $\eta\col\Om \to S_2$ be random variables on a probability space $(\Om,\F,\P)$. 

We call $(\xi,\eta)$ a \emph{weak solution for $(\Gamma,\CC,\nu)$} if $\law(\xi,\eta)\in \SS_{\Gamma,\CC,\nu}$. 

We call    
$(\xi,\eta)$ a \emph{strong solution for $(\Gamma,\CC,\nu)$} if $\law(\xi,\eta)\in \SS_{\Gamma,\CC,\nu}$ and $\xi=F(\eta)$ a.s. for some Borel measurable map $F\col S_2\to S_1$. 

We say that \emph{pointwise uniqueness holds} if whenever $(\xi,\eta)$ and $(\wtxi,\eta)$ are weak solutions for $(\Gamma,\CC,\nu)$ and defined on the same probability space: $\xi=\wtxi$ a.s.

We say that \emph{joint uniqueness in law holds} if $\#S_{\Gamma,\CC,\nu}\leq 1$.  
   
We say that \emph{uniqueness in law holds} if 
for all $\mu,\wtmu\in\SS_{\Gamma,\CC,\nu}$: $\pi_1\#\mu=\pi_1\#\wtmu$.   
\end{definition}

In SPDE terminology, pointwise uniqueness corresponds to pathwise uniqueness, joint uniqueness in law corresponds to weak joint uniqueness and uniqueness in law corresponds to weak uniqueness. This will be made precise in Theorem \ref{th:YW SPDE}.

\begin{remark}
Recalling the definition of $\SS_{\Gamma,\CC,\nu}$ (Definition \ref{def:compatible}), it is a priori unclear whether all weak solutions for $(\Gamma,\CC,\nu)$ are $\CC$-compatible.  Fortunately, the latter holds, as observed in \cite[(2.3)]{kurtz14}. Indeed, \eqref{eq:compatible} holds if and only if 
\[
\inf_{f\in \BM(S_1\times S_2,\BB_\alpha^{S_1}\otimes\BB_\alpha^{S_2})}\E[(h(\eta)-f(\xi,\eta))^2]=\inf_{f\in\BM(S_2,\BB_\alpha^{S_2})}\E[(h(\eta)-f(\eta))^2], 
\]
using that $h$ is bounded, so the conditional expectation is an $L^2$-projection. 

Now if $\xi\col\Om\to S_1$, $\eta\col\Om\to S_2$, ${\wtxi}\col\tilde{\Om}\to S_1$ and ${\wteta}\col\tilde{\Om}\to S_2$ are random variables on probability spaces $(\Om,\F,\P)$ and $(\tilde{\Om},\tilde{\F},\tilde{\P})$ such that   $\law(\xi,\eta)=\law({\wtxi},{\wteta})$ and $\xi\com \eta$, then 
we may replace $\xi$ and $\eta$ by  $\wtxi$ and $\wteta$ in the identity above, proving that 
${\wtxi}\com {\wteta}$.  
 \end{remark}

The next lemma, which is similar to \cite[Lem.\ 1.3c)]{kurtz14}, will be used. Note that \eqref{it:a.s. equal}  is true  if $E_1$ and $E_2$ are separable metric spaces and $\A_i=\BB(E_i)$.

\begin{lemma}\label{lem:cond expec dist}
Let $(\Om,\F,\P)$   be a probability space and let $(E_1,\mathcal{A}_1)$, $(E_2,\mathcal{A}_2)$ be measurable spaces. Let $\xi\col\Om\to E_1$ and $\eta\col\Om\to E_2$ be measurable. 
Suppose that 
\begin{equation}\label{it:a.s. equal}
 D\ceqq \{(z,z):z\in E_1\times E_2\}\in (\A_1\otimes \A_2) \otimes (\A_1\otimes \A_2).
\end{equation}   
If $\law(\xi,\eta)=\law(F(\eta),\eta)$ for a measurable map $F\col (E_2,\A_2)\to (E_1,\A_1)$, then $\xi=F(\eta)$ a.s.  
\end{lemma}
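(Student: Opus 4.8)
The statement asserts that if the joint law of $(\xi,\eta)$ coincides with that of $(F(\eta),\eta)$, then $\xi = F(\eta)$ almost surely. The plan is to transport the question about equality of random variables into a question about the measure $\law(\xi,\eta)$ evaluated on the diagonal set $D$. First I would observe that the map $\Phi\col \Om \to (E_1\times E_2)\times(E_1\times E_2)$ defined by $\Phi(\om) \ceqq \big((\xi(\om),\eta(\om)),(F(\eta(\om)),\eta(\om))\big)$ is measurable, since $\xi$, $\eta$ and $F$ are measurable and products of measurable maps are measurable with respect to the product $\sigma$-algebra. By hypothesis \eqref{it:a.s. equal}, the diagonal $D$ lies in $(\A_1\otimes\A_2)\otimes(\A_1\otimes\A_2)$, so $\Phi^{-1}(D) = \{\om : \xi(\om) = F(\eta(\om))\} \in \F$ and the event $\{\xi = F(\eta)\}$ is measurable; it then suffices to show $\P(\Phi^{-1}(D)) = 1$, equivalently $\P\big((\xi,\eta,F(\eta),\eta) \in D\big) = 1$.

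The key step is to rewrite this probability purely in terms of $\law(\xi,\eta)$. Consider the measurable map $\Psi\col E_1\times E_2 \to (E_1\times E_2)\times(E_1\times E_2)$, $\Psi(x,y) \ceqq \big((x,y),(F(y),y)\big)$, which is measurable because $F$ is. Then $\Phi = \Psi\circ(\xi,\eta)$, so $\P(\Phi^{-1}(D)) = \law(\xi,\eta)\big(\Psi^{-1}(D)\big)$. Now $\Psi^{-1}(D) = \{(x,y) \in E_1\times E_2 : x = F(y)\}$, which I will call $\Delta_F$; it is measurable in $\A_1\otimes\A_2$ (again by measurability of $\Psi$ and \eqref{it:a.s. equal}). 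The same computation applied to the pair $(F(\eta),\eta)$ gives $\P\big((F(\eta),\eta,F(\eta),\eta)\in D\big) = \law(F(\eta),\eta)(\Delta_F)$, but this latter quantity is trivially $1$ since $F(\eta) = F(\eta)$ pointwise, so $(F(\eta),\eta) \in \Delta_F$ surely. Using the hypothesis $\law(\xi,\eta) = \law(F(\eta),\eta)$, we conclude $\law(\xi,\eta)(\Delta_F) = \law(F(\eta),\eta)(\Delta_F) = 1$, hence $\P(\xi = F(\eta)) = \law(\xi,\eta)(\Delta_F) = 1$.

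The only delicate point is the measurability bookkeeping: one must be careful that $\Delta_F = \Psi^{-1}(D)$ is genuinely in $\A_1\otimes\A_2$, which is exactly what the hypothesis \eqref{it:a.s. equal} is designed to guarantee after pulling back $D$ through the measurable map $\Psi$. (The remark in the statement that \eqref{it:a.s. equal} holds automatically when $E_1,E_2$ are separable metric with their Borel $\sigma$-algebras covers our later applications, since the diagonal of a separable metric space is closed, hence Borel, in the product.) No serious analytic obstacle arises; the whole argument is a change-of-variables identity for pushforward measures, and I expect the writeup to be short.
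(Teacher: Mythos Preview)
Your proof is correct and follows essentially the same route as the paper's: both show that the graph $\Delta_F=\{(x,y):x=F(y)\}$ (the paper calls it $C$) is measurable via the diagonal hypothesis, and then verify that $\law(\xi,\eta)$ assigns it full mass. The only cosmetic difference is that the paper writes out the disintegration $\law(\xi,\eta)=\delta_{F(y)}(\ddd x)\,\nu(\ddd y)$ and integrates directly, whereas you use the cleaner observation that $\law(\xi,\eta)(\Delta_F)=\law(F(\eta),\eta)(\Delta_F)=\P\big((F(\eta),\eta)\in\Delta_F\big)=1$; both arrive at the same conclusion with the same amount of work.
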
 
\begin{proof} 
Let $\nu\ceqq\law(\eta)$. We have $\law(\xi,\eta)=\law(F(\eta),\eta)=\delta_{F(y)}(\ddd x)\nu(\ddd y)$.  
Let $C\ceqq \{(F(y),y):y\in E_2\}$. Note that $\one_C(x,y)=\one_D((F(y),y),(x,y))$. Since $D\in (\A_1\otimes \A_2) \otimes (\A_1\otimes \A_2)$ and $(x,y)\mapsto ((F(y),y),(x,y))$ is $\A_1\otimes \A_2/(\A_1\otimes \A_2) \otimes (\A_1\otimes \A_2)$-measurable, it holds that $C\in \A_1\otimes \A_2$. Thus we may compute 
\begin{align*}
\P(\xi=F(\eta))
=\int_{E_2}\int_{E_1} \one_C(x,y)\delta_{F(y)}(\ddd x)\nu(\ddd y)&=\int_{E_2} \one_C(F(y),y)\nu(\ddd y)
=\int_{E_2}\nu(\ddd y)=1.
\end{align*}  
\end{proof}

The following result can be found in \cite[Lem.\ 2.10]{kurtz14}. For convenience, we provide some  details. 

\begin{corollary}\label{cor:compatible}   
Pointwise uniqueness for jointly $\CC$-compatible solutions is equivalent to pointwise uniqueness for $\CC$-compatible solutions.  
\end{corollary}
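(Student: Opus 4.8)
The plan is to prove the two implications separately, with the nontrivial direction being that pointwise uniqueness for $\CC$-compatible solutions implies pointwise uniqueness for jointly $\CC$-compatible solutions. The implication from jointly $\CC$-compatible to $\CC$-compatible should be routine: if $(\xi_1,\xi_2)\com\eta$, then each of $\xi_1$ and $\xi_2$ is individually $\CC$-compatible with $\eta$. Indeed, by the tower property applied to \eqref{eq:compatiblejoint}, for $h\in\BM(S_2,\BB(S_2))$ we have $\E[h(\eta)\mid\F_\alpha^{\xi_i}\vee\F_\alpha^\eta]=\E\big[\E[h(\eta)\mid\F_\alpha^{\xi_1}\vee\F_\alpha^{\xi_2}\vee\F_\alpha^\eta]\mid\F_\alpha^{\xi_i}\vee\F_\alpha^\eta\big]=\E[h(\eta)\mid\F_\alpha^\eta]$, since the inner conditional expectation equals $\E[h(\eta)\mid\F_\alpha^\eta]$, which is already $\F_\alpha^\eta$-measurable and hence $(\F_\alpha^{\xi_i}\vee\F_\alpha^\eta)$-measurable. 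So each weak solution $(\xi_i,\eta)$ arising from a jointly $\CC$-compatible pair is in particular a $\CC$-compatible weak solution, whence pointwise uniqueness for $\CC$-compatible solutions forces $\xi_1=\xi_2$ a.s.

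For the converse, suppose pointwise uniqueness holds for $\CC$-compatible solutions, and let $(\xi_1,\eta)$ and $(\xi_2,\eta)$ be weak solutions for $(\Gamma,\CC,\nu)$ on a common probability space $(\Om,\F,\P)$ with $(\xi_1,\xi_2)\com\eta$. I want to conclude $\xi_1=\xi_2$ a.s. The strategy is the standard Yamada--Watanabe-type diagonal argument: build a new probability space carrying two copies $\wtxi_1,\wtxi_2$ of the solutions that are \emph{conditionally independent given $\eta$} (in an appropriate $\sigma$-algebra sense), glued along their common $\eta$-marginal. Concretely, on $S_1\times S_1\times S_2$ one forms the measure obtained by disintegrating $\law(\xi_i,\eta)=\mu_i(\ddd x\mid y)\,\nu(\ddd y)$ and setting $\Theta(\ddd x_1,\ddd x_2,\ddd y)\ceqq\mu_1(\ddd x_1\mid y)\,\mu_2(\ddd x_2\mid y)\,\nu(\ddd y)$ (regular conditional distributions exist since the $S_i$ are complete metric, hence standard Borel). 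One then checks that under $\Theta$, with $\wtxi_i$ the coordinate projections to the $i$-th $S_1$-factor and $\wteta$ the projection to $S_2$, each pair $(\wtxi_i,\wteta)$ has the same law as $(\xi_i,\eta)$ — so it is again a weak solution for $(\Gamma,\CC,\nu)$ and, by the Remark following Definition \ref{def: kurtz solution}, still $\CC$-compatible — and moreover $(\wtxi_1,\wtxi_2)\com\wteta$. This last point is the crux: one must verify that the compatibility structure is preserved by the conditionally-independent gluing, i.e. that $\E[h(\wteta)\mid\F_\alpha^{\wtxi_1}\vee\F_\alpha^{\wtxi_2}\vee\F_\alpha^{\wteta}]=\E[h(\wteta)\mid\F_\alpha^{\wteta}]$. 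Applying pointwise uniqueness for $\CC$-compatible solutions to $\wtxi_1$ and $\wtxi_2$ gives $\wtxi_1=\wtxi_2$ $\Theta$-a.s., which means $\Theta$ is supported on the diagonal of $S_1\times S_1$; unwinding the disintegration, $\mu_1(\ddd x\mid y)=\mu_2(\ddd x\mid y)=\delta_{\phi(y)}$ for $\nu$-a.e.\ $y$ and a measurable $\phi\col S_2\to S_1$. Hence $\law(\xi_i,\eta)=\law(\phi(\eta),\eta)$ for both $i$, and Lemma \ref{lem:cond expec dist} (whose diagonal-measurability hypothesis \eqref{it:a.s. equal} holds because $S_1,S_2$ are separable metric with Borel $\sigma$-algebras) yields $\xi_1=\phi(\eta)=\xi_2$ a.s.\ on the original space.

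The main obstacle I anticipate is the verification that joint $\CC$-compatibility of $(\wtxi_1,\wtxi_2)$ with $\wteta$ is inherited from the individual $\CC$-compatibilities together with the conditional-independence-given-$\eta$ construction. This is exactly the kind of measure-theoretic bookkeeping with the $\sigma$-algebras $\F_\alpha^{S_i}$, completions, and the $L^2$-projection characterization of the compatibility condition from the Remark that is needed; it is the analogue, in Kurtz' abstract framework, of the step in the classical Yamada--Watanabe proof where one shows the glued filtration still makes the noise a martingale/Brownian motion. I would handle it by working with the $L^2$-projection reformulation: it suffices to show that for each $\alpha$, approximating $h(\wteta)$ by $\BB_\alpha^{S_2}$-measurable functions of $\wteta$ is no worse than approximating by $\BB_\alpha^{S_1}\otimes\BB_\alpha^{S_1}\otimes\BB_\alpha^{S_2}$-measurable functions of $(\wtxi_1,\wtxi_2,\wteta)$, and this follows from the product structure $\mu_1(\cdot\mid y)\otimes\mu_2(\cdot\mid y)$ together with the two individual compatibility relations. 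Everything else — existence of regular conditional distributions, the identification of marginals, the support-on-the-diagonal conclusion, and the final application of Lemma \ref{lem:cond expec dist} — is routine given the standing assumption that $S_1,S_2$ are complete metric spaces.
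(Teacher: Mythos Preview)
You have the two directions swapped. The implication ``pointwise uniqueness for $\CC$-compatible solutions $\Rightarrow$ pointwise uniqueness for jointly $\CC$-compatible solutions'' is the \emph{trivial} one, and your first-paragraph tower argument proves exactly this: joint compatibility $(\xi_1,\xi_2)\com\eta$ implies each $\xi_i\com\eta$, so uniqueness for individually compatible pairs immediately forces $\xi_1=\xi_2$. The nontrivial direction is the reverse: assuming only pointwise uniqueness for \emph{jointly} $\CC$-compatible solutions, one must deduce uniqueness for arbitrary weak solutions $(\xi_1,\eta),(\xi_2,\eta)$ with no a~priori joint-compatibility relation between $\xi_1$ and $\xi_2$.

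Your second paragraph, as written, reproves the trivial direction a second time: you assume pointwise uniqueness for $\CC$-compatible solutions, you take $(\xi_1,\xi_2)\com\eta$, and you invoke $\CC$-compatible uniqueness on the glued copies---so the elaborate disintegration construction is never needed, and the hard direction is never addressed. To repair this, the hypothesis in paragraph~2 must be changed to ``pointwise uniqueness for jointly $\CC$-compatible solutions'', the assumption $(\xi_1,\xi_2)\com\eta$ on the given pair must be dropped (you only know each $\xi_i\com\eta$ separately, via the Remark), and at the end you must invoke the \emph{joint} uniqueness hypothesis on the glued pair $(\wtxi_1,\wtxi_2)$. With those corrections your gluing-by-disintegration argument becomes exactly the right strategy, and the ``crux'' step---that the conditionally independent gluing produces a jointly $\CC$-compatible pair---is indeed the substantive point. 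The paper handles this step by citing \cite[Lem.~2.11]{kurtz14} after realising the disintegration via the Blackwell--Dubins representation $G(\zeta,\chi),\tilde G(\zeta,\tilde\chi)$ with jointly independent $\zeta,\chi,\tilde\chi$; your product-kernel description and the paper's randomisation description are two presentations of the same coupling, so once the direction is fixed the approaches coincide.
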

\begin{proof}   
Let $\xi,\wtxi\col\Om\to S_1$ and $\eta,\wteta\col\Om\to S_2$ be random variables. 
Note that $(\xi,{\wtxi})\com \eta$ implies $\xi\com \eta$ and ${\wtxi}\com \eta$. Thus pointwise uniqueness for $\CC$-compatible solutions implies pointwise uniqueness for jointly $\CC$-compatible solutions. For the other direction, assume that we have pointwise uniqueness for jointly $\CC$-compatible solutions and suppose that $\law(\xi,\eta),\law({\wtxi},\eta)\in \SS_{\Gamma,\CC,\nu}$. We have to show that $\xi={\wtxi}$ a.s. 

As in the proof of \cite[Th.\ 1.5 a)$\Rightarrow$b)]{kurtz14}: by \cite{blackwelldubins83}, there exist Borel measurable maps $G,\tilde{G}\col S_2\times[0,1]\to S_1$, such that for any random variables $\chi$, $\tilde{\chi}$ and $\zeta$ with $\chi$, $\tilde{\chi}$ uniformly distributed, $\law(\zeta)=\nu$,  $\zeta\indep \chi$ and $\zeta\indep \tilde{\chi}$, 
we have 
\begin{equation}\label{eq:G_i}
  \law(G(\zeta,\chi),\zeta)=\law(\xi,\eta)\in\SS_{\Gamma,\CC,\nu},\quad \law(\tilde{G}(\zeta,\tilde{\chi}),\zeta)=\law({\wtxi},\eta)\in\SS_{\Gamma,\CC,\nu}.
\end{equation}
In particular, the above holds for the random variables $\chi,\tilde{\chi}, \zeta$ on $(\Om',\F',\P')\ceqq(\Om\times [0,1]\times [0,1],\F\otimes\BB([0,1])\otimes \BB([0,1]),\P\otimes \lambda\otimes\lambda)$ (with $\lambda$ the Lebesgue measure on $[0,1]$) defined by 
\begin{align}\label{eq: zeta chi}
  \zeta(\om,x,\tilde{x})\ceqq\eta(\om), \quad\chi(\om,x,\tilde{x})\ceqq x, \quad \tilde{\chi}(\om,x,\tilde{x})\ceqq \tilde{x}. 
\end{align}
Furthermore, these $\zeta, \chi$ and $\tilde{\chi}$ are jointly independent, so application of 
  \cite[Lem.\ 2.11]{kurtz14} yields $(G(\zeta,\chi),\tilde{G}(\zeta,\tilde{\chi}))\com \zeta$. Combined with \eqref{eq:G_i}, we conclude that $(G(\zeta,\chi),\zeta)$ and $(\tilde{G}(\zeta,\tilde{\chi}),\zeta)$ are jointly $\CC$-compatible weak solutions for $(\Gamma,\CC,\nu)$.  
  Thus, the assumed pointwise uniqueness yields $G(\zeta,{\chi})=\tilde{G}(\zeta,\tilde{\chi})$ a.s. Recalling the independence of $\zeta$, $\chi$ and $\tilde{\chi}$,  \cite[Lem.\ A.2]{kurtz07} implies that $G(\zeta,\chi)=\tilde{G}(\zeta,\tilde{\chi})=F(\zeta)$ a.s. for a Borel measurable map $F\col S_2\to S_1$. Together with  \eqref{eq:G_i} and the definitions of $\zeta$ and $\P'$, we thus obtain 
  \[
  \law(\xi,\eta)=\law({\wtxi},\eta)=\law(F(\zeta),\zeta)
  =\law(F(\eta),\eta).
  \] 
At last, Lemma \ref{lem:cond expec dist}  gives $\xi=F(\eta)={\wtxi}$ a.s., completing the proof. 
\end{proof}

The next theorem is an extension of the Yamada--Watanabe--Engelbert theorem  \cite[Th.\ 1.5]{kurtz14} to  $\CC$-compatible solutions, as mentioned in \cite[p.\ 7]{kurtz14}. With the  preparations done, the proof is essentially the same as that of \cite[Th.\ 1.5]{kurtz14} (without $\CC$-compatibility) and \cite[Th.\ 3.14]{kurtz07} ($\Gamma$ convex), but we include it and add the equivalent statement \ref{it:YW3Kurtz} to the formulation.

\begin{theorem}\label{th:YW}
The following are equivalent:
\begin{enumerate}[label=\textup{(\alph*)},ref=\textup{(\alph*)}] 
\item \label{it:YW1Kurtz} $S_{\Gamma,\CC,\nu}\neq\varnothing$ and pointwise uniqueness holds.
\item \label{it:YW2Kurtz} There exists a strong solution for $(\Gamma,\CC,\nu)$ and joint uniqueness in law holds. 
\item \label{it:YW3Kurtz} $S_{\Gamma,\CC,\nu}\neq\varnothing$ and joint uniqueness in law holds, and there exists a Borel measurable map $F\col S_2\to S_1$ s.t. for any random variable $\eta\col\Om\to S_2$ with $\law(\eta)=\nu$,  $(F(\eta),\eta)$ is a strong solution for $(\Gamma,\CC,\nu)$. Lastly, any weak solution $(\xi,\eta)$ for $(\Gamma,\CC,\nu)$ satisfies $\xi=F(\eta)$ a.s. 
    \end{enumerate}
\end{theorem}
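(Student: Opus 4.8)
The plan is to prove the cycle \ref{it:YW3Kurtz}$\Rightarrow$\ref{it:YW2Kurtz}$\Rightarrow$\ref{it:YW1Kurtz}$\Rightarrow$\ref{it:YW3Kurtz}, following the scheme of \cite[Th.~1.5]{kurtz14} and \cite[Th.~3.14]{kurtz07}; the substance lies in \ref{it:YW1Kurtz}$\Rightarrow$\ref{it:YW3Kurtz}. For \ref{it:YW3Kurtz}$\Rightarrow$\ref{it:YW2Kurtz}: on $(S_2,\BB(S_2),\nu)$ with $\eta=\Id_{S_2}$, the pair $(F(\eta),\eta)$ is by hypothesis a strong solution, and joint uniqueness in law is part of \ref{it:YW3Kurtz}. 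For \ref{it:YW2Kurtz}$\Rightarrow$\ref{it:YW1Kurtz}: a strong solution gives $\SS_{\Gamma,\CC,\nu}\neq\varnothing$; writing $(\xi_0,\eta_0)$ for such a solution, with $\xi_0=F(\eta_0)$ a.s.\ for a Borel $F$, joint uniqueness in law forces $\SS_{\Gamma,\CC,\nu}=\{\mu_0\}$, $\mu_0\ceqq\law(F(\eta_0),\eta_0)$. If now $(\xi,\eta)$ and $(\wtxi,\eta)$ are weak solutions on one probability space, then $\law(\xi,\eta)=\law(\wtxi,\eta)=\mu_0$; since $\law(F(\eta),\eta)$ depends only on $\law(\eta)$, and here $\law(\eta)=\nu=\law(\eta_0)$, we get $\law(\xi,\eta)=\law(F(\eta),\eta)$ and likewise for $\wtxi$, so Lemma \ref{lem:cond expec dist} yields $\xi=F(\eta)=\wtxi$ a.s.

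For \ref{it:YW1Kurtz}$\Rightarrow$\ref{it:YW3Kurtz} I would argue as in the proof of Corollary \ref{cor:compatible}. By that corollary, pointwise uniqueness holds also for jointly $\CC$-compatible solutions. Fix $\mu\in\SS_{\Gamma,\CC,\nu}$, represented by some $(\xi,\eta)$ with $\xi\com\eta$. By Blackwell--Dubins \cite{blackwelldubins83} there is a Borel map $G\col S_2\times[0,1]\to S_1$ with $\law(G(\zeta,\chi),\zeta)=\law(\xi,\eta)$ whenever $\chi$ is uniformly distributed on $[0,1]$, $\law(\zeta)=\nu$ and $\zeta\indep\chi$. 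Take $\zeta,\chi,\tilde\chi$ to be the coordinate projections on $(S_2\times[0,1]\times[0,1],\,\BB(S_2)\otimes\BB([0,1])\otimes\BB([0,1]),\,\nu\otimes\lambda\otimes\lambda)$; these are jointly independent and $\law(G(\zeta,\chi),\zeta)=\law(G(\zeta,\tilde\chi),\zeta)=\mu$. Then \cite[Lem.~2.11]{kurtz14} gives $(G(\zeta,\chi),G(\zeta,\tilde\chi))\com\zeta$, so these form a pair of jointly $\CC$-compatible weak solutions for $(\Gamma,\CC,\nu)$; pointwise uniqueness forces $G(\zeta,\chi)=G(\zeta,\tilde\chi)$ a.s., and, $\zeta,\chi,\tilde\chi$ being independent, \cite[Lem.~A.2]{kurtz07} produces a Borel $F\col S_2\to S_1$ with $G(\zeta,\chi)=F(\zeta)$ a.s.

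It then remains to check the three assertions of \ref{it:YW3Kurtz} for this $F$. Since $\law(F(\eta),\eta)$ depends only on $\law(\eta)$, for every $\eta$ with $\law(\eta)=\nu$ we have $\law(F(\eta),\eta)=\law(F(\zeta),\zeta)=\mu\in\SS_{\Gamma,\CC,\nu}$, so $(F(\eta),\eta)$ is a strong solution. For joint uniqueness in law, given a second $\mu'\in\SS_{\Gamma,\CC,\nu}$ the same construction gives a Borel $F'$ with $\law(F'(\eta),\eta)=\mu'$ for $\eta$ with $\law(\eta)=\nu$; on $(S_2,\BB(S_2),\nu)$ with $\eta=\Id_{S_2}$, both $(F(\eta),\eta)$ and $(F'(\eta),\eta)$ are weak solutions (their laws lie in $\SS_{\Gamma,\CC,\nu}$, hence they are automatically $\CC$-compatible by the Remark following Definition \ref{def: kurtz solution}), so pointwise uniqueness gives $F(\eta)=F'(\eta)$ a.s.\ and therefore $\mu=\mu'$. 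Finally, if $(\xi,\eta)$ is any weak solution on some $(\Om,\F,\P)$, then $\law(\eta)=\nu$, so $(F(\eta),\eta)$ is a weak solution on the same space, and pointwise uniqueness yields $\xi=F(\eta)$ a.s.

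I expect the main obstacle to be the measure-theoretic bookkeeping in \ref{it:YW1Kurtz}$\Rightarrow$\ref{it:YW3Kurtz}: arranging one probability space carrying $\zeta$ together with two independent uniform randomisations $\chi,\tilde\chi$; verifying via \cite[Lem.~2.11]{kurtz14} that the resulting pair $(G(\zeta,\chi),G(\zeta,\tilde\chi))$ is genuinely jointly $\CC$-compatible, which is the step where the compatibility structure interacts with the product construction; and applying \cite[Lem.~A.2]{kurtz07} correctly to collapse $G(\zeta,\cdot)$ to a deterministic function of $\zeta$ alone. Everything else reduces to Lemma \ref{lem:cond expec dist}, Corollary \ref{cor:compatible}, and the elementary observation that $\law(F(\eta),\eta)$ depends only on $F$ and $\law(\eta)$.
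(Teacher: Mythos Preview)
Your proposal is correct and follows essentially the same approach as the paper: the same cycle of implications, the same use of Blackwell--Dubins, \cite[Lem.~2.11]{kurtz14}, \cite[Lem.~A.2]{kurtz07}, and Lemma~\ref{lem:cond expec dist}. The only organizational difference is in \ref{it:YW1Kurtz}$\Rightarrow$\ref{it:YW3Kurtz}: the paper takes two arbitrary $\mu,\wtmu\in\SS_{\Gamma,\CC,\nu}$ at once, builds $G$ and $\tilde G$ for them, and obtains both the map $F$ and the equality $\mu=\wtmu$ in a single stroke, whereas you first construct $F$ from one fixed $\mu$ (using the same $G$ twice with two independent uniforms) and then argue joint uniqueness in law separately via pointwise uniqueness applied to $(F(\eta),\eta)$ and $(F'(\eta),\eta)$; both routes are valid and use the same ingredients.
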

\begin{proof}
\ref{it:YW1Kurtz}$\Rightarrow$\ref{it:YW3Kurtz}: Let  ${\mu},{\wtmu}\in \SS_{\Gamma,\CC,\nu}$ be arbitrary, not necessarily distinct.  
By \cite{blackwelldubins83}, there exist Borel measurable maps $G,\tilde{G}\col S_2\times[0,1]\to S_1$ such that for any random variable $\zeta\col\Om'\to S_2$ with $\law(\zeta)=\nu$ and for any uniformly distributed $\chi,\tilde{\chi}\col\Om'\to[0,1]$ with $\chi\indep \zeta$, $\tilde{\chi}\indep \zeta$ and ${\chi}\indep \tilde{\chi}$: 
\begin{equation}\label{eq:G_i2}
 \law(G(\zeta,\chi),\zeta)=\mu\in\SS_{\Gamma,\CC,\nu}, \quad 
 \law(\tilde{G}(\zeta,\tilde{\chi}),\zeta)=\wtmu\in\SS_{\Gamma,\CC,\nu}. 
\end{equation} 
Let $\eta\col \Om\to S_2$ be measurable with $\law(\eta)=\nu$. We apply the above with $(\Om',\F',\P')$, $\zeta$, $\chi$ and $\tilde{\chi}$ defined by \eqref{eq: zeta chi}.   
Pointwise uniqueness yields $(G(\zeta,\chi),\zeta)=\tilde{G}(\zeta,\tilde{\chi}),\zeta)$ $\P'$-a.s. Combining the latter with independence of ${\chi}$ and $\tilde{\chi}$ and \cite[Lem.\ A.2]{kurtz07}, we obtain  
\begin{equation}\label{eq: F(eta)}
F(\zeta)=G(\zeta,{\chi})=\tilde{G}(\zeta,\tilde{\chi})\quad\P'\text{-a.s. } 
\end{equation}
for some Borel measurable map $F\col S_2\to S_1$. The construction of $F$ in \cite[Lem.\ A.2]{kurtz07} does not depend on $\zeta$ (or $\eta$), only on $\tilde{G}$ and $\law(\tilde{\chi})$. Recalling the construction of $\P'$ and $\zeta$ and combining \eqref{eq:G_i2} and \eqref{eq: F(eta)}, we find $
\law(F(\eta),\eta)=\law(F(\zeta),\zeta)=\mu=\wtmu\in S_{\Gamma,\CC,\nu}$, \ie $(F(\eta),\eta)$ is a strong solution. Also, ${\mu}={\wtmu}$ yields $\#S_{\Gamma,\CC,\nu}=1$, proving joint uniqueness in law. 

For the last part of \ref{it:YW3Kurtz}, let $(\xi,\eta)$ be any weak solution for $(\Gamma,\CC,\nu)$. Then $\law(\eta)=\nu$, so by the above, $(F(\eta),\eta)$ is a weak solution for $(\Gamma,\CC,\nu)$, living on the same probability space as $(\xi,\eta)$. Pointwise uniqueness yields  $\xi=F(\eta)$ a.s.   

\ref{it:YW3Kurtz}$\Rightarrow$\ref{it:YW2Kurtz}: Trivial.

\ref{it:YW2Kurtz}$\Rightarrow$\ref{it:YW1Kurtz}: Clearly, $S_{\Gamma,\CC,\nu}\neq\varnothing$. It remains to prove pointwise uniqueness. Let  $(\xi_*,\eta_*)$ be a strong solution for $(\Gamma,\CC,\nu)$ with  measurable  $F\col S_2\to S_1$ such that $\xi_*=F(\eta_*)$ a.s. Put $\mu_*\ceqq \law(\xi_*,\eta_*)\in S_{\Gamma,\CC,\nu}$.   
We have $\mu_*(\ddd x,\ddd y)=\delta_{F(y)}(\ddd x)\nu(\ddd y)$. 
Now let $({\xi},\eta)$ and $({\wtxi},\eta)$ be solutions for $(\Gamma,\CC,\nu)$ on the same probability space $(\Om,\F,\P)$. By \ref{it:YW2Kurtz}, $\#S_{\Gamma,\CC,\nu}\leq 1$, so $\law({\xi},\eta)=\law({\wtxi},\eta)=\mu_*$. 
Moreover, $\mu_*=\law(F(\eta),\eta)$, since for all $A\in \BB(S_1)$, $B\in \BB(S_2)$, we have  
\begin{align*}
\P((F(\eta),\eta)\in A\times B)=\!\int_{S_2}\one_{F^{-1}(A)\cap B}(y)\nu(\ddd y)&=\!\int_{S_2}\int_{S_1}\one_{A\times B}(x,y)\delta_{F(y)}(\ddd x)\nu(\ddd y)=\mu_*(A\times B).
\end{align*}
Therefore, $\law({\xi},\eta)=\law({\wtxi},\eta)=\mu_*=\law(F(\eta),\eta)$  and Lemma \ref{lem:cond expec dist} yields $ {\xi}=F(\eta)={\wtxi} $ a.s., proving pointwise uniqueness. 
\end{proof}

\subsection{Stochastic integration in Banach spaces}\label{sub:stoch integration}

Here, we discuss some relevant aspects of stochastic integration. 
Throughout this paper, we will use the following function space notations. 
If $X$ is a normed space and  $(S_1,{\A}_1,\mu_1)$ is a measure space, then $L^0(S_1;X)$ and $L^0_{\A_1}(S_1;X)$ denote the linear space of strongly $\mu_1$-measurable  
functions $f\col (S_1,\A_1)\to X$, with identification of $\mu_1$-\aev equal functions, and equipped with the topology of convergence in measure. 
For $p\in (0,\infty)$:
$$
L^p(S_1;X)\ceqq\{f\in L^0(S_1;X): \|f\|_{L^p(S_1;X)}<\infty\} , \quad \|f\|_{L^p(S_1;X)}\ceqq
({\textstyle\int_{S_1} \|f\|_X^p \dd \mu_1)^{\frac{1}{p}}}$$ 
and we put $L^p(S_1)\ceqq L^p(S_1;\R)$.  
For a metric space $M$ and $T\in(0,\infty]$,  $C(\bar{I}_T;M)$ denotes the space of continuous functions $\bar{I}_T\to M$ equipped with the topology of uniform convergence on compact subsets of $\bar{I}_T$. We put $C(\bar{I}_T)\ceqq C(\bar{I}_T;\R)$. 
We will use the following notation for point evaluation maps:
\begin{equation}\label{eq: point evaluation}
  \pi_M^t\col C(\bar{I}_T;M)\to M\col f\mapsto f(t),\qquad t\in\bar{I}_T.
\end{equation} 

Now, for stochastic integration, let us begin by defining a $U$-cylindrical Brownian motion.  
\begin{definition}\label{def: cylindrical BM} Let $(\Om,\F,\P,(\F_t))$ be a filtered probability space, let $U$ be a real separable Hilbert space and let $W\in\mathcal{L}(L^2(\R_+;U),L^2(\Om))$. Then $W$ is called a \emph{$U$-cylindrical Brownian motion} with respect to $(\Om,\F,\P,(\F_t))$ if for all $f,g\in L^2(\R_+;U)$ and $t\in\R_+$:
  \begin{enumerate}[label=\emph{(\roman*)},ref=\textup{(\roman*)}]
    \item \label{it:bm1} $Wf$ is normally distributed with mean zero and $\E[Wf Wg]=\<f,g\?_{L^2(\R_+;U)}$,
    \item \label{it:bm2} if $\supp(f)\subset [0,t]$, then $Wf$ is $\F_t$-measurable,
    \item \label{it:bm3} if $\supp(f)\subset [t,\infty)$, then $Wf$ is independent of $\F_t$.
  \end{enumerate}
\end{definition} 

Recall that we call a map from a measurable space to a metric space $S$ a random variable if it is measurable. 
For the Yamada--Watanabe(--Engelbert) theorem, we will need to view a $U$-cylindrical Brownian motion  $W$ as a random variable $\Om\to\W$ for some Polish space $\W$, but $W$ in Definition \ref{def: cylindrical BM} is not a random variable. For this reason, we will use an associated $\R^\infty$-Brownian motion.   

\begin{definition}\label{def: sequence independent BM}
Let $(\Om,\F,\P,(\F_t))$ be a filtered probability space. 
An \emph{$\R^\infty$-Brownian motion} with respect to $(\Om,\F,\P,(\F_t))$ is a sequence $(\beta_k)$ consisting of independent standard real-valued $(\F_t)$-Brownian motions. 
\end{definition}

Suppose now that an orthonormal basis $(e_k)$ for $U$ is given. Then the above definitions can be related as follows.   
Any $U$-cylindrical Brownian motion $W$  with respect to $(\Om,\F,\P,(\F_t))$ induces an everywhere continuous  $\R^\infty$-Brownian motion $(W^k)$ with respect to  $(\Om,\overline{\F},\overline{\P},(\overline{\F_t}))$ (the completion defined in \nameref{sub:notation}), through: 
\begin{equation}\label{eq: def Rinfty BM}
W^k(t)=W(\one_{(0,t]}\otimes e_k),\quad k\in\N, t\in\R_+. 
\end{equation} 
Here, for each $k\in\N$, we choose a continuous modification $W^k$ of $W(\one_{(0,\cdot]}\otimes e_k)$  using the Kolmogorov continuity theorem and \ref{it:bm1}. Since $W(\one_{(0,t]}\otimes e_k)=W^k(t)$ $\P$-\as for all $t\in\R_+$,  the claimed properties of $(W^k)$ follow from \ref{it:bm1}-\ref{it:bm3}.   
 
Conversely, for any $\R^\infty$-Brownian motion $(W^k)$ with respect to $(\Om,\F,\P,(\F_t))$, one can construct a $U$-cylindrical Brownian motion $W$ with respect to $(\Om,{\F},{\P},({\F_t}))$ satisfying \eqref{eq: def Rinfty BM}. 
Since linear combinations of functions $\one_{(0,t]}\otimes e_k$ are dense in $L^2(\R_+;U)$, and since we have $W\in \mathcal{L}(L^2(\R_+;U);L^2(\Om))$, we see that for any $f\in L^2(\R_+;U)$, $Wf$ (as an element of $L^2(\Om)$) is uniquely determined by $(W^k)$. That is, the $U$-cylindrical Brownian motion $W$ satisfying \eqref{eq: def Rinfty BM} is unique.  

Therefore, in contexts where an orthonormal basis $(e_k)$ is fixed, we identify 
\[
W=(W^k). 
\]
With this identification, and restricting to $\bar{I}_T$ ($T\in(0,\infty]$), $W$ is an actual random variable taking values in 
\[
\W\ceqq C(\bar{I}_T;\R^\infty),
\]  
where $\R^\infty$($=\R^{\N}$)  is equipped with 
$d_\infty(x,y)\ceqq \sum_{j=1}^\infty 2^{-j}(|x_j-y_j|\wedge 1)$, metrizing the product topology. 
We equip $\W$ with the topology of uniform convergence on compact subsets, so that it becomes a Polish space which is completely metrized by 
\[
\textstyle{
d(w,z)\ceqq \sum_{k=1}^\infty 2^{-k}\big[\big(\sup_{t\in \bar{I}_{k\wedge T}}d_\infty(w(t),z(t))\big)\wedge 1 \big].}
\]    
Now indeed, $W\col\Om\to\W$ is a random variable: by Definition \ref{def: cylindrical BM} and \eqref{eq: def Rinfty BM}, $W^k(t)\in L^2(\Om)$ for each $k$ and $t$, hence $W^k(t)\col \Om\to\R$ is measurable. Measurability of $W$ thus follows from Lemma \ref{lem: generating maps}, noting that $\R^\infty$ is completely Hausdorff  and the continuous maps $\W\to \R\col w\mapsto w_k(t)$, for $k\in\N$ and $t\in\bar{I}_T$ separate the points in $\W$. 
Alternatively, one may also use a $Q$-Wiener process induced by $W$, see Remark \ref{rem: Q-wiener}. 

In Section \ref{sec:YW SPDE}, we will write ${\wien}\in\PP(\W)$ for the law of a continuous $\R^\infty$-Brownian motion.  
\begin{remark}\label{rem:wien measure} $\wien$ is unique: let $(\beta_k)$ be any sequence of continuous independent standard real-valued Brownian motions with respect to any filtration.  
For   $n\in\N$, $(\beta_1,\ldots,\beta_n)$ is an $n$-dimensional standard Brownian motion, as can be seen from the L\'evy characterization \cite[Th.\ 3.16]{karatzas98}. Also, $
\BB(\W)=\sigma(\Pi_n:n\in\N)$ with $\Pi_n\col \W\to \W\col w\mapsto (w_1,\ldots,w_n,0,0,\ldots)$. As $\{\Pi_n^{-1}(A):A\in\BB(\W),n\in\N\}$ is a $\pi$-system generating $\BB(\W)$, we find that  $\law((\beta_k))$ is entirely determined by 
$\{\law(\beta_1,\ldots,\beta_n):n\in\N\}$, \ie the law of $n$-dimensional Brownian motions, which is unique. 
\end{remark} 
 
We will consider stochastic integration in M-type 2  spaces and in UMD spaces. For the definitions of these classes of Banach spaces, see \cite[Def.\ 4.4, Def.\ 5.2]{NVW15}. We recall that every Hilbert space is both UMD and of M-type 2. Moreover, $L^p(S_1)$   is UMD for $p\in(1,\infty)$ and has M-type 2 for $p\in[2,\infty)$. There also exist M-type 2 spaces which are not UMD. 

Let us mention that a Banach space has M-type 2 if and only if it has an equivalent 2-smooth norm \cite{pisier75}. Also, if a Banach space is UMD and has type 2, then it has M-type 2 \cite[Prop.\ 5.3]{NVW15}

Now let $Y$ be of M-type 2 or UMD.  Given a $U$-cylindrical Brownian motion with respect to $(\Om,\F,\P,(\F_t))$, one can define the stochastic integral $\int_0^\cdot f\dd W\in L^0(\Om;C(\bar{I}_T;Y))$ for  certain processes $f\col \bar{I}_T\times \Om \to \LL(U,Y)$. 
Conditions for stochastic integrability of $f$ will be given below.  For more details on stochastic integration in M-type 2 spaces and UMD spaces, we refer to   \cite[\S 4.2, \S 5.4]{NVW15}.  
For elementary adapted rank one processes, the stochastic integral is defined by  
\begin{equation}\label{eq: def stoch int elementary}
 \int_0^t\one_{(t_1,t_2]\times A}\otimes(u\otimes y)\dd W\ceqq \one_Ay\big(W(\one_{(0,t_2\wedge t]}\otimes u)-W(\one_{(0,t_1\wedge t]}\otimes u)\big), 
\end{equation}
for $0\leq t_1<t_2$, $A\in \F_{t_1}$, $u\in U$ and $y\in Y$, where $u\otimes y\in\LL(U,Y)$ is given by $v\mapsto \<u,v\?_Uy$. 

Definition \eqref{eq: def stoch int elementary} is extended linearly to \emph{elementary adapted processes}, \ie linear combinations of elementary adapted rank one processes.   
It is then further extended by continuity (from Burkholder's inequality for the M-type 2 case, and from the It\^o isomorphism for the UMD case), by localization and by existence of a  progressively measurable modification \cite[Th.\ 0.1]{ondrejatseidler13}. These procedures lead to a class of \emph{stochastically integrable processes}. 

Before we describe the latter, let us recall  that a  map $f\col S\to\LL(U,Y)$ is called $U$-strongly measurable if $f(\cdot)x\col S\to Y$ is strongly measurable for all $x\in U$. Likewise, $f\col \bar{I}_T\times \Om\to\LL(U,Y)$ is called $U$-strongly $(\F_t)$-adapted if $f(t,\cdot)x$ is strongly $\F_t$-measurable for all $t\in \bar{I}_T$ and $x\in U$. We now define stochastic integrability as in \cite[p.\ 2]{NVW07conditions}

\begin{definition}\label{def: UMD process} 
Let $T\in(0,\infty)$, let $(\Om,\F,\P,(\F_t))$ be a filtered probability space, let $U$ be a separable real Hilbert space and let $Y$ be a real Banach space. Let $W$ be a $U$-cylindrical Brownian motion on $(\Om,\F,\P,(\F_t))$. Let $\phi\col \bar{I}_T\times\Om\to \LL(U,Y)$ be $U$-strongly measurable. 

We say that $\phi$ is \emph{stochastically integrable} with respect to $W$ if there exist elementary adapted processes $\phi_k\col \bar{I}_T\times\Om\to \LL(U,Y)$ ($k\in\N$) and a $\zeta\col \Om\to C(\bar{I}_T;Y)$, such that 
\begin{itemize}
  \item $\<\phi_k(\cdot)u,y^*\?\to \<\phi(\cdot) u,y^*\?$ in measure for all $u\in U$ and $y^*\in Y^*$,  
  \item $\int_0^\cdot \phi_k \dd W\to \zeta$ in probability in $C(\bar{I}_T;Y)$.
\end{itemize} 
\end{definition} 

Let us discuss some   alternative conditions for stochastic integrability. To this end,   $\gamma$-radonifying operators are useful.  
For a Hilbert space $U$ and a Banach space  $Y$, $U\otimes Y\subset \LL(U,Y)$ denotes the linear  space of finite rank operators. Let  $(\gamma_k)$ be a sequence of independent real-valued standard Gaussian random variables. Then $\guy$, the Banach space of $\gamma$-radonifying operators, is defined as the completion of $U\otimes Y$ with respect to the norm 
\[
\textstyle{\|\sum_{n=1}^N u_n\otimes y_n\|_{\guy}\ceqq \big(\E\big[\|\sum_{n=1}^N \gamma_ny_n\|_Y^2\big]\big)^{1/2}, }
\] 
where $u_1,\ldots, u_N$ are orthonormal in $U$. This norm is well-defined, as the chosen decomposition on the left-hand side does not affect the value on the right-hand side. We refer to \cite[\S 9.1]{HNVWvolume2} for further details and useful properties. Here, let us only mention some simple  examples:
\begin{itemize}
  \item $\guy=\LL_2(U,Y)$ (the Hilbert--Schmidt operators) if $Y$ is Hilbert \cite[Prop.\ 9.1.9]{HNVWvolume2},
  \item $\gamma(U,L^p(S_1))\cong L^p(S_1;U)$ if $U$ is real \cite[p.\ 285]{HNVWvolume2}. Similar identifications can be made for Sobolev spaces. 
\end{itemize} 

If $Y$ has M-type 2, then a sufficient condition for stochastic integrability of  $\phi\col \bar{I}_T\times\Om\to \LL(U,Y)$ is that it is  $U$-strongly measurable, $U$-strongly $(\F_t)$-adapted and $f\in L^2(\bar{I}_T;\guy)$ a.s., see  \cite[p.\ 306]{NVW15}.  
If $Y$ is UMD, then we have different conditions, for which we need one more definition. 

\begin{definition}\label{def: UMD process2} 
Let the assumptions of Definition \ref{def: UMD process} hold and let $\psi\col \bar{I}_T\to \LL(U,Y)$ and $\phi\col \bar{I}_T\times\Om\to \LL(U,Y)$ be $U$-strongly measurable. 

We say that $\psi$ \emph{belongs scalarly to}  $L^2(I_T;U)$ if for all $y\in Y^*$, $\psi(\cdot)^*y^*\in L^2(I_T;U)$. We say that such $\psi$ \emph{represents} $R\in \gamma(L^2(I_T;U),Y)$ if 
\begin{equation}\label{eq: def represent determ}
\<R h,y^*\?=\int_0^T \<h(t),\psi(t)^*y^*\?_U\dd t\qquad \text{ for all }h\in L^2(I_T;U),y^*\in Y^*.
\end{equation} 

We say that $\phi$ \emph{belongs scalarly \as to} $L^2(I_T;U)$ if for all $y\in Y^*$, $\phi(\cdot,\om)^*y^*\in L^2(I_T;U)$ a.s.  
If $\phi$ belongs scalarly \as to $L^2(I_T;U)$ and $\xi\col \Om\to \gluy$ is strongly measurable, we say that $\phi$ \emph{represents} $\xi$ if for all $h\in L^2(I_T;U)$ and $y^*\in Y^*$:
\begin{equation}\label{eq: def represent}
\<\xi(\om) h,y^*\?=\int_0^T \<h(t),\phi(t,\om)^*y^*\?_U\dd t\qquad\text{for \aev }\om\in\Om. 
\end{equation}  
\end{definition} 

If  $Y$ is UMD, stochastic integrability   can now be characterized  as follows:  $\phi\col \bar{I}_T\times\Om\to \LL(U,Y)$ is stochastically integrable  if and only if it is $U$-strongly measurable, $U$-strongly $(\F_t)$-adapted, $\phi$ belongs scalarly \as to $L^2(I_T;U)$ and it represents an element $\xi\in L^0(\Om,\gamma(L^2(I_T;U),Y))$. 
More equivalent conditions for stochastic integrability  can be found in \cite[Th.\ 5.9]{NVW07}.

Lastly, let us  mention that  if $Y$ is UMD and has type 2, hence M-type 2, then we have  embeddings $L^2(I_T;\guy)\into \gamma(L^2(I_T;U),Y)$ for  $T\in(0,\infty)$.  

\begin{remark}\label{rem: represent}
Let $\psi$ and $\phi$ be as in Definition \ref{def: UMD process2}. If $\psi$ represents $R\in \gamma(L^2(\bar{I}_T;U),Y)$, then  $R$ is trivially unique by \eqref{eq: def represent determ}. Conversely, $\psi$ is $\mathrm{d}t$-\aev determined by $R$ through \eqref{eq: def represent determ}, 
so we may identify $\psi$ (as an element of $L^0({I}_T;\LL(U,Y))$) with $R$ and simply write
\[
\psi\in\gamma(L^2(I_T;U),Y). 
\]
This notation will be used to indicate that $\psi$ belongs scalarly to $L^2(I_T;U)$. 

Moreover, by \cite[Lem.\ 2.7, Rem.\ 2.8]{NVW07}, $\phi$ represents a strongly measurable $\xi\col \Om\to \gluy$ if and only if for \aev $\om\in\Om$ there exists a $\xi_{\om}\in \gamma(L^2(I_T;U),Y)$ such that $\phi(\cdot,\om)$ represents $\xi_{\om}$. 
\end{remark}

\section{Assumptions and solution notion}\label{sec:setting}

For our Yamada--Watanabe--Engelbert result, we will consider the following stochastic evolution equation in Banach spaces:
\begin{equation}\label{SPDE}
\dd u(t)=b(t,u)\dd t+\sigma(t,u)\dd W(t),\quad t\in \bar{I}_T. 
\end{equation}  
Moreover, for mild solutions,  we will study  
\begin{equation}\label{SPDE mild}
u(t)=S(t,0)u(0)+\int_0^t S(t,s){b}(s,u)\dd s+\int_0^t S(t,s){\sigma}(s,u)\dd W(s),\quad t\in \bar{I}_T,  
\end{equation}
for evolution families $(S(t,s))_{s,t\in \bar{I}_T, s\leq t}$ on Banach spaces.  In both equations, 
$W$ will be a $U$-cylindrical Brownian motion as defined in Definition \ref{def: cylindrical BM}.

In this section, we specify the underlying spaces and assumptions belonging to \eqref{SPDE} and \eqref{SPDE mild}. Furthermore, we will define our notion of solution in Definition \ref{def: SPDE C-sol}. This solution notion and the assumptions are formulated in such a way, that many settings are covered at once.
 
\begin{assumption}\label{ass}
The following hold:
  \begin{itemize}\setlength\itemsep{.1em}
  \item $T\in(0,\infty]$ 
  \item $Y$ and $Z$ are real separable Banach spaces and $Y\into Z$, 
  \item $\B$ is a Polish space such that $\B\into C({\bar{I}_T};Z)$,  
  \item $U$ is a real separable Hilbert space,  
  \item $(e_k)$ is a fixed orthonormal basis for $U$,  
  \item 
    \begin{align}
       &\begin{cases}
     b\col I_T\times\B\to Z \text{ is }\BB(I_T)\otimes\BB(\B)/\BB(Z)\text{-measurable},\\
     \sigma\col I_T\times\B\to \guy \text{ is }\BB(I_T)\otimes\BB(\B)/\BB(\guy)\text{-measurable},
    \end{cases}\label{eq:b sigma mble}\\[.2cm]
       &\begin{cases} 
     b(t,\cdot) \text{ is }\BB_t(\B)/\BB(Z)\text{-measurable for all }t\in I_T,\\
    \sigma(t,\cdot) \text{ is }\BB_t(\B)/\BB(\guy)\text{-measurable for all }t\in I_T,
    \end{cases}\label{eq:b sigma adapted}
          \end{align}
 \end{itemize}
 where
\begin{equation}\label{eq:def B_t(B^T)}
  \BB_t(\B)\ceqq \sigma(\pi_1^s:s\in \bar{I}_t), \qquad \pi^s_1\col \B\to Z\col v\mapsto v(s),
\end{equation}
identifying $v\in\B$ with its image in $C(\bar{I}_T;Z)$ under $\B\into C(\bar{I}_T;Z)$.  
\end{assumption}

The codomains of $b$ and $\sigma$ are separable, thus `measurable' in \eqref{eq:b sigma mble} and \eqref{eq:b sigma adapted} is equivalent to `strongly measurable', due to Pettis' lemma \cite[Prop.\ I.1.9]{vakhania87}. For $\sigma$, the measurability conditions in \eqref{eq:b sigma mble} and \eqref{eq:b sigma adapted} are also equivalent to having $\BB(I_T)\otimes\BB(\B)/\BB(Y)$-measurability of $\sigma(\cdot)u$ and $\BB_t(\B)/\BB(Y)$-measurability of $\sigma(t,\cdot)u$ for all $u\in U$, due to \cite[Lem.\ 2.5]{NVW07}.  

The time paths of solutions to the SPDE will lie \as in the space $\B$ (see Definition \ref{def: SPDE C-sol}). Typically, but not necessarily, $\B$ could be $C(\bar{I}_T;Z_1)\cap L^p_\loc(\bar{I}_T;Z_2)$ with $Z_1,Z_2\into Z$. Note that $b$ and $\sigma$ are defined on the whole space $\B$. However, in most applications, $b$ and $\sigma$ are of the \emph{Markovian form} of the next lemma. Here, the space $\tilde{X}$ is introduced to allow the coefficients to be only defined on a subspace of $Z$, provided that  $\B\subset L^0(I_T;\tilde{X})$ (\ie  solutions are \as $\tilde{X}$-valued).

\begin{lemma}\label{lem:coeffs markov} 
Let $T$, $Y$, $Z$, $\B$ and $U$ be as in Assumption \ref{ass}. Let $\tilde{X}$ be a Banach space such that $\tilde{X}\into Z$ and $\B\subset L^0(I_T;\tilde{X})$. 
Let 
\begin{equation}\label{eq: b sigma markov}
\begin{split}
 & b\col I_T\times \B\to Z,\, b(t,u)\ceqq \bar{b}\big(t,u(t)\one_{\tilde{X}}(u(t))\big),\\ 
 & \sigma\col I_T\times \B\to \guy,\, \sigma(t,u)\ceqq \bar{\sigma}\big(t,u(t)\one_{\tilde{X}}(u(t))\big), 
 \end{split}
\end{equation}
where
\begin{equation}\label{eq: bar b bar sigma}
\begin{split}
&\bar{b}\col I_T\times \tilde{X} \to Z\text{ is }\BB(I_T)\otimes \BB(\tilde{X})/\BB(Z)\text{-measurable},\\
&\bar{\sigma}\col I_T\times \tilde{X} \to \guy\text{ is }\BB(I_T)\otimes \BB(\tilde{X})/\BB(\guy)\text{-measurable}.
\end{split}
\end{equation}
Then  $b$ and $\sigma$ satisfy \eqref{eq:b sigma mble} and \eqref{eq:b sigma adapted}. 
\end{lemma}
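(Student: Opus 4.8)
The plan is to factor $b$ and $\sigma$ through a single measurable map and then read off both required measurability statements. Concretely, I would set
\[
\kappa\col Z\to\tilde X,\ \kappa(z)\ceqq z\,\one_{\tilde X}(z),\qquad \Phi\col I_T\times\B\to I_T\times\tilde X,\ \Phi(t,u)\ceqq\big(t,\kappa(u(t))\big),
\]
so that $b=\bar b\circ\Phi$ and $\sigma=\bar\sigma\circ\Phi$. Since $I_T$ and $\tilde X$ are separable metric, $\BB(I_T\times\tilde X)=\BB(I_T)\otimes\BB(\tilde X)$, and since a coordinate section of a jointly measurable map is measurable, it then suffices to show: (i) $\Phi$ is $\BB(I_T)\otimes\BB(\B)/\BB(I_T)\otimes\BB(\tilde X)$-measurable, which upon composing with $\bar b$ and $\bar\sigma$ yields \eqref{eq:b sigma mble}; and (ii) for each fixed $t\in I_T$ the map $u\mapsto\kappa(u(t))$ is $\BB_t(\B)/\BB(\tilde X)$-measurable, which upon composing with the sections $\bar b(t,\cdot)\col\tilde X\to Z$ and $\bar\sigma(t,\cdot)\col\tilde X\to\guy$ (themselves Borel, being sections of jointly measurable maps) yields \eqref{eq:b sigma adapted}.

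The key point — and the one I expect to be the only real obstacle — is the measurability of $\kappa\col Z\to\tilde X$, because $\tilde X$ carries its own Borel $\sigma$-algebra, which a priori need not be the trace of $\BB(Z)$. I would handle this via the Lusin--Souslin theorem: the continuous injection $\tilde X\into Z$ is a Borel isomorphism of $\tilde X$ onto a Borel subset of $Z$, so (identifying $\tilde X$ with its image) $\tilde X\in\BB(Z)$ and $\BB(\tilde X)=\{B\cap\tilde X:B\in\BB(Z)\}$. Consequently $\one_{\tilde X}\col Z\to\R$ is Borel, hence $z\mapsto z\,\one_{\tilde X}(z)$ is a Borel map $Z\to Z$ with range in $\tilde X$, and therefore $\kappa$ is $\BB(Z)/\BB(\tilde X)$-measurable.

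It then remains to assemble (i) and (ii). For (i): the evaluation $\bar{I}_T\times C(\bar{I}_T;Z)\to Z$, $(t,f)\mapsto f(t)$, is jointly continuous for the topology of uniform convergence on compacts — if $(t_n,f_n)\to(t,f)$ then $\|f_n(t_n)-f(t)\|_Z\le\sup_{s\in K}\|f_n(s)-f(s)\|_Z+\|f(t_n)-f(t)\|_Z\to0$ with $K\ceqq\{t\}\cup\{t_n:n\in\N\}$ compact — so composing with the continuous embedding $\B\into C(\bar{I}_T;Z)$ makes $(t,u)\mapsto u(t)$ continuous, hence Borel, from $I_T\times\B$ to $Z$; composing further with $\kappa$ and $\Id_{I_T}$ gives that $\Phi$ is $\BB(I_T)\otimes\BB(\B)/\BB(I_T)\otimes\BB(\tilde X)$-measurable. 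For (ii): fix $t\in I_T$; since $t\in\bar{I}_t$, the evaluation $\pi_1^t\col\B\to Z$ is one of the generators of $\BB_t(\B)$ and is thus $\BB_t(\B)/\BB(Z)$-measurable, whence $u\mapsto\kappa(u(t))=\kappa\circ\pi_1^t(u)$ is $\BB_t(\B)/\BB(\tilde X)$-measurable. Composing as described with $\bar b(t,\cdot)$ and $\bar\sigma(t,\cdot)$ then gives \eqref{eq:b sigma adapted}, which together with (i) completes the argument.
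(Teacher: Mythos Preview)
Your proof is correct and follows essentially the same route as the paper: factor $b=\bar b\circ\Phi$ with $\Phi(t,u)=(t,u(t)\one_{\tilde X}(u(t)))$, use Kuratowski/Lusin--Souslin to see that $z\mapsto z\one_{\tilde X}(z)$ is $\BB(Z)/\BB(\tilde X)$-measurable, and combine with the (joint) continuity of $(t,u)\mapsto u(t)$ for \eqref{eq:b sigma mble} and with $\pi_1^t$ being a generator of $\BB_t(\B)$ for \eqref{eq:b sigma adapted}. Your presentation is slightly cleaner in isolating the map $\kappa$ once and proving its $\BB(Z)/\BB(\tilde X)$-measurability explicitly, whereas the paper argues the two parts separately; but the ideas are identical.
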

\begin{proof}
We prove the claim only for $b$, as the proof for $\sigma$ goes the same. 

\eqref{eq:b sigma adapted}: Note that $b(t,u)=\bar{b}(t,\cdot\one_{\tilde{X}}(\cdot))\circ \pi_t^1(u)$. By Fubini and the joint measurability of $\bar{b}$, $\bar{b}(t,\cdot)$ is $\BB(\tilde{X})/\BB(Z)$-measurable. Since $\tilde{X}\into Z$, Kuratowski's theorem \cite[Th.\ 15.1]{kechris95} gives that $\bar{b}(t,\cdot\one_{\tilde{X}}(\cdot))$ is $\BB(Z)/\BB(Z)$-measurable. 
Also, $\pi^1_t$ is $\BB_t(\B)/\BB(Z)$-measurable, since $\BB_t(\B)$ is generated by $\pi^1_s$ for $s\in\bar{I}_t$. 
Hence $u\mapsto b(t,u(t)\one_{\tilde{X}}(u(t)))=\big(\bar{b}(t,\cdot\one_{\tilde{X}}(\cdot))\circ \pi_t^1(u)$ is $\BB_t(\B)/\BB(Z)$-measurable. 

\eqref{eq:b sigma mble}: Note that $b=\bar{b}\circ \phi$, where $\phi(t,u)\ceqq (t,u(t)\one_{\tilde{X}}(u(t)))$. Thus, by \eqref{eq: bar b bar sigma}, it suffices to prove that $\phi$ is $\BB(I_T)\otimes\BB(\B)/\BB(I_T)\otimes\BB(\tilde{X})$-measurable. 
Now, $\phi_1\col(t,u)\mapsto(t,u(t))$ is $\BB(I_T)\otimes\BB(\B)/\BB(I_T)\otimes\BB(Z)$-measurable (continuous even, using that $\B\into C({\bar{I}_T};Z)$).  
Furthermore, since $\tilde{X}\into Z$, Kuratowski's theorem gives that $\phi_2\col(t,x)\mapsto(t,x\one_{\tilde{X}}(x))$ is $\BB(I_T)\otimes\BB(Z)/\BB(I_T)\otimes\BB(\tilde{X})$-measurable.  
We conclude that $\phi=\phi_2\circ\phi_1$ is indeed $\BB(I_T)\otimes\BB(\B)/\BB(I_T)\otimes\BB(\tilde{X})$-measurable.
\end{proof}

We will define our notion of solution to \eqref{SPDE} and \eqref{SPDE mild} in Definition \ref{def: SPDE C-sol}. It permits the flexibility to choose a collection of conditions $\cond$ for the solution. These conditions will encode analytically strong, analytically weak, mild or weakly mild solutions, possibly with extra conditions regarding integrability in time or $\om$. 

Recall that $L^p_{\loc}(\bar{I}_T;X)=\{u\in L^0(\bar{I}_T; X) : u|_{{I}_t}\in L^p({I}_t;X) \,\forall\, t\in \bar{I}_T\}$, for $T\in(0,\infty]$. Of course, if $T<\infty$, then $L^p_{\loc}(\bar{I}_T;X)=L^p(\bar{I}_T;X)$.  

Let $D$ be any fixed subset of $Z^*$. The conditions that one may choose to require for solutions to   \eqref{SPDE} are as follows. 
\begin{enumerate}[label=(\arabic*),ref=(\arabic*)]  
 \item\label{it:sol integrability} $Y$ has M-type 2 and $\P$-\as $$ 
   b(\cdot,u)\in L^1_\loc({\bar{I}_T};Z), \quad \sigma(\cdot,u)\in L^2_\loc({\bar{I}_T};\guy). $$ 
 \item\label{it:sol integrability UMD} $Y$ is UMD and $\P$-a.s.: for all $t\in\bar{I}_T$,
 $$
  b(\cdot,u)\in L^1_\loc({\bar{I}_T};Z), \quad \sigma(\cdot,u)\in \gamma(L^2(I_t;U),Y)$$
   \item\label{it:sol integrability ana weak} For all $z^*\in D\subset Z^*$: $\P$-a.s.
 \begin{equation*} 
   \<b(\cdot,u),z^*\?\in L^1_\loc({\bar{I}_T}), \quad z^*\circ\sigma(\cdot,u)\in L^2_\loc({\bar{I}_T};U^*).
   \end{equation*}
   \item\label{it:integrated eq} \ref{it:sol integrability} or \ref{it:sol integrability UMD} holds and $\P$-a.s. 
 \begin{equation*} 
     u(\cdot) =u(0)+\int_0^\cdot b(s,u)\dd s+\int_0^\cdot \sigma(s,u)\dd {W}(s)\,\text{ in $C({\bar{I}_T};Z)$.}
 \end{equation*} 
 \item\label{it:integrated eq ana weak} \ref{it:sol integrability} or \ref{it:sol integrability UMD} or \ref{it:sol integrability ana weak} holds and for all $z^*\in D\subset Z^*$, $\P$-a.s. 
\begin{equation*} 
  \<u(\cdot),z^*\? =\<u(0),z^*\?+\int_0^\cdot \<b(s,u),z^*\?\dd s+\int_0^\cdot z^*\circ \sigma(s,u)\dd {W}(s)\,\text{ in $C({\bar{I}_T})$.}
  \end{equation*}
   \end{enumerate}  

Besides equation \eqref{SPDE}, we will study \eqref{SPDE mild} for mild solutions. 
For either equation, one may include moment conditions and \as conditions for solutions. Let $f\col\B\to \R$ be  measurable and let $p\in(0,\infty)$. Let $E\in\BB(\B)$. 
\begin{enumerate}[label=(\arabic*),ref=(\arabic*),resume]
\item\label{it:moment cond} $f(u)\in L^p(\Om)$.  
\item\label{it:as cond} $\P(u\in E)=1$.
\end{enumerate}   
A typical example of \ref{it:moment cond} is the condition `$u\in L^p(\Om;\tilde{\B})$', for a normed space $\tilde{\B}$ with $\B\into \tilde{\B}$, choosing $f(u)=\|u\|_{\tilde{\B}}$. See Example \ref{ex:schrodinger}. 

For $S$ appearing in \eqref{SPDE mild}, we assume the following. 
\begin{definition}\label{def: evol fam}  
Let $Z$ be a Banach space and let $T\in(0,\infty]$. We call $S=(S(t,s))_{s,t\in \bar{I}_T, s\leq t}$ a \emph{strongly measurable evolution family} on $Z$ if $S\col \Lambda\to \LL(Z)$ and
\begin{align}
 &S(\cdot,\cdot)x\col \Lambda\to Z\text{ is  Borel measurable for all }x\in Z, \qquad \Lambda\ceqq\{(t,s)\in \bar{I}_T\times \bar{I}_T:s\leq t\}, \label{eq: mble evol fam} \\
&S(s,s)=\Id_Z,\, S(t,r)S(r,s)=S(t,s)  \text{ for all }s\leq r\leq t. \notag
\end{align} 
\end{definition}
If $S$ is a strongly measurable evolution family on $Z$ and if Assumption \ref{ass} holds, then we understand $S(t,s)\sigma(s,u)$ as the composition $S(t,s)|_Y\circ \sigma(s,u)\in \gamma(U,Z)$. 

Let $S$ be a strongly measurable evolution family and let $D$ be any fixed subset of $Z^*$. 
The following conditions can be used to work with mild solutions. 
\begin{enumerate}[label=(\arabic*),ref=(\arabic*),resume]
\item\label{it:sol integrability mild} $Y$ has M-type 2 and $\P$-a.s.: for all $t\in \bar{I}_T$,
\begin{align*} 
S(t,\cdot){b}(\cdot,u) \in L^1(I_t;Z), \quad S(t,\cdot){\sigma}(\cdot,u) \in  L^2(I_t;\guz).
\end{align*} 
   \item\label{it:sol integrability mild UMD} $Y$ is UMD and $\P$-a.s.: for all $t\in \bar{I}_T$,
\begin{align*} 
S(t,\cdot){b}(\cdot,u) \in L^1(I_t;Z), \quad S(t,\cdot){\sigma}(\cdot,u) \in \gamma(L^2(I_t;U),Z).
\end{align*} 
 \item\label{it:sol integrability mild weak} For all $z^*\in D\subset Z^*$, $\P$-a.s.: for all $t\in \bar{I}_T$,
\begin{align*} 
\<S(t,\cdot){b}(\cdot,u),z^*\? \in L^1({I_t}), \quad z^*\circ S(t,\cdot){\sigma}(\cdot,u) \in L^2(I_t;U^*).
\end{align*} 
\item\label{it:integrated eq mild} \ref{it:sol integrability mild} or \ref{it:sol integrability mild UMD} holds and $\P$-a.s.: for all $t\in \bar{I}_T$, 
    \begin{equation*} 
    u(t)=S(t,0)u(0)+\int_0^t S(t,s){b}(s,u)\dd s+\int_0^t S(t,s){\sigma}(s,u)\dd W(s) \text{ in $Z$}.  
    \end{equation*}
\item\label{it:integrated eq mild weak} \ref{it:sol integrability mild} or \ref{it:sol integrability mild UMD} or \ref{it:sol integrability mild weak}  holds and for all $z^*\in D\subset Z^*$, $\P$-a.s.: for all $t\in \bar{I}_T$, 
    \begin{equation*} 
    \<u(t),z^*\?=\<S(t,0)u(0),z^*\?+\int_0^t \<S(t,s){b}(s,u),z^*\?\dd s+\int_0^t \<S(t,s){\sigma}(s,u),z^*\?\dd W(s). 
    \end{equation*}
\end{enumerate}  

\begin{remark}
Of course, there are several implications amongst the above conditions. For example, strong integrability implies weak integrability: \ref{it:sol integrability}$\vee$\ref{it:sol integrability UMD}$\Rightarrow$\ref{it:sol integrability ana weak}. Moreover, strong integrability and satisfaction of the weak equations with $D=Z^*$ implies satisfaction of the strong equation:  [\ref{it:sol integrability}$\vee$\ref{it:sol integrability UMD}]$\wedge$\ref{it:integrated eq ana weak}$\Rightarrow$\ref{it:integrated eq}, by taking a sequence in $Z^*$ that separates the points, as we will do in the proof of Theorem \ref{th:stoch int rep type 2 or UMD}. 
If $Y$ is a Hilbert space, then \ref{it:sol integrability} and \ref{it:sol integrability UMD} are equivalent. Lastly,  if $Y$ is both UMD and of type 2 (hence of M-type 2), then we have the implication \ref{it:sol integrability}$\Rightarrow$\ref{it:sol integrability UMD}. 
Analogous implications hold amongst \ref{it:sol integrability mild}--\ref{it:integrated eq mild weak}.
\end{remark} 

The reason for separating the conditions is that our (variable) notion of solution will directly correspond to standard notions of solutions, while taking hardly any extra effort in the proofs. These standard notions are not always equivalent (see Remark \ref{rem:not equiv}).

Let us now introduce our variable notion of solution to \eqref{SPDE} and \eqref{SPDE mild}. Here,  $\cond$ is in theory allowed to be any of the indicated subcollections. However, to make sense as a `solution',   $\cond$ should contain at least one of the equations \ref{it:integrated eq}, \ref{it:integrated eq ana weak}, \ref{it:integrated eq mild} or \ref{it:integrated eq mild weak}. 
 
\begin{definition}\label{def: SPDE C-sol}
 Let Assumption \ref{ass} hold. Let $\cond$ be a subcollection of conditions \ref{it:sol integrability}-\ref{it:as cond} or of \ref{it:moment cond}-\ref{it:integrated eq mild weak}. 
 We call $(u,W,(\Om,\F,\P,(\F_t))$ a \emph{$\cond$-weak solution} to \eqref{SPDE}, respectively \eqref{SPDE mild}, if:  
 \begin{itemize}
   \item $(\Om,\F,\P,(\F_t))$ is a filtered probability space,
   \item $W$ is a $U$-cylindrical Brownian motion on $(\Om,\F,\P,(\F_t))$,
   \item $u\col {\bar{I}_T}\times \Om\to Z$ is $\BB({\bar{I}_T})\otimes\F/\BB(Z)$-measurable and $(\F_t)$-adapted,  
   \item $\P$-\as $u\in\B$,
   \item $\cond$ holds. 
 \end{itemize}

We call $(u,W,(\Om,{\F},{\P},({\F_t})))$ a \emph{$\cond$-strong solution} if it is a $\cond$-weak solution and if $u=F(u(0),W)$ $\P$-\as for some measurable map $F\col Z\times\W\to \B$, where $W=(W^k)$ through \eqref{eq: def Rinfty BM} and $\W\ceqq C(\bar{I}_T;\R^\infty)$. Moreover, we say that:
\begin{itemize}
\item \emph{pathwise uniqueness} holds if for every two 
$\cond$-weak solutions 
$(u,W,(\Om,\F,\P, (\F_t)))$ and $(v,W,(\Om,\F,\P, (\F_t)))$ with $u(0)=v(0)$ $\P$-a.s.: $\P$-a.s. $u=v$ in $\B$,
\item \emph{joint weak uniqueness} holds if for every two $\cond$-weak solutions 
$(u,W,(\Om,\F,\P,(\F_t)))$ and $(v,W',(\Om',\F',\P', (\F'_t)))$ with $\law(u(0))=\law(v(0))$: $\law(u,W)=\law(v,W')$,
\item\emph{weak uniqueness} holds if for every two $\cond$-weak solutions $(u,W,(\Om,\F,\P,(\F_t)))$ and

\noindent
 $(v,W',(\Om',\F',\P', (\F'_t)))$ with $\law(u(0))=\law(v(0))$: $\law(u)=\law(v)$.  
\end{itemize} 
\end{definition}

Many different definitions of `strong solution' appear in the literature. 
The above notion of `$\cond$-strong solution' is analogous to the notion of `strong solution' used by Kurtz. The analogy makes the proof of the Yamada--Watanabe--Engelbert theorem (Theorem \ref{th:YW SPDE}) more transparant. Still, we will obtain `strong existence' in a much stronger sense in Theorem \ref{th:YW SPDE} and Corollary \ref{cor: single F}.  

\begin{example}\label{ex:choices C}
The following choices of $\cond$ correspond to standard notions of solutions. In the lines with two options, the first is suited if $Y$ has M-type 2 and the second is suited if $Y$ is UMD.
\begin{itemize}
  \item $\cond=\{\ref{it:sol integrability},\ref{it:integrated eq}\}$ or $\cond=\{\ref{it:sol integrability UMD},\ref{it:integrated eq}\}$: analytically strong solution.
  \item $\cond=\{\ref{it:sol integrability ana weak},\ref{it:integrated eq ana weak}\}$, $D=Z^*$: analytically weak solution.
  \item $\cond=\{\ref{it:sol integrability},\ref{it:integrated eq ana weak}\}$ or $\cond=\{\ref{it:sol integrability UMD},\ref{it:integrated eq ana weak}\}$, $D=Z^*$: analytically weak solution, stronger integrability.   
  \item $\cond=\{\ref{it:sol integrability mild}, \ref{it:integrated eq mild}\}$ or $\cond=\{\ref{it:sol integrability mild UMD}, \ref{it:integrated eq mild}\}$: mild solution.
  \item $\cond=\{\ref{it:sol integrability mild weak},\ref{it:integrated eq mild weak}\}$, $D=Z^*$: weakly mild solution.
  \item $\cond=\{\ref{it:sol integrability mild},\ref{it:integrated eq mild weak}\}$ or $\cond=\{\ref{it:sol integrability mild UMD},\ref{it:integrated eq mild weak}\}$, $D=Z^*$: weakly mild solution, stronger integrability.
  \item $\cond=\{\ref{it:sol integrability},\ref{it:integrated eq}\}$  
  with $Z=V^*$, $\B\into L^0(I_T;V)\cap C(\bar{I}_T;H)$, $H$ a Hilbert space and $(V,H,V^*)$ a Gelfand triple: variational framework solution. 
\end{itemize}  
\end{example} 

\begin{remark}\label{rem:not equiv}
We included all these different solution notions because they cannot be used interchangeably in general. 
For example, the analytically weak and weakly mild notions can be used in a Banach space $Y$ without additional geometrical structure, but for the analytically strong and mild solution notions one does need structure of $Y$ so that the stochastic integrals can be defined. 
Secondly, the notions of mild and weakly mild solutions only make sense if part of the coefficient $b$ has  some linear structure, while this is not needed for analytically strong and analytically weak solutions.  
Lastly, even for semilinear equations in an M-type 2 or UMD space $Y$, to get equivalence between analytically weak and analytically strong or mild solutions, one may need to assume extra structure of the evolution family/semigroup or generator thereof, see \eg \cite[Prop.\ 6.9,  Hyp.\ 6.5]{kunze13} and  \cite[Prop.\ 7.5.12, (C2)]{veraarphdthesis}.
\end{remark}

Let us emphasize that many `\as conditions' can be modeled by the choice of the Polish space $\B$.  Recall that a $\cond$-weak solution has to satisfy $u\in \B$ a.s. The space $\B$ need not be linear.  Time integrability conditions can be put into $\B$, but also positivity conditions, by taking an intersection with a space of  positive valued functions (Example \ref{ex:positive}). In general, it may be useful to recall that an open or closed subset of a Polish space is again Polish.  Additional `\as conditions' can be added through \ref{it:as cond}, such as weak continuity conditions (Example \ref{ex:schrodinger}). 

Note that colorings of the noise can be included through $\sigma$ (see \eg Example \ref{ex:ondrejat setting}).

A few technical comments on the notion of $\cond$-weak solution are in order. 

\begin{remark}
The notations `$\in\gamma(L^2(I_t;U),Y)$' and `$\in\gamma(L^2(I_t;U),Z)$' in conditions \ref{it:sol integrability UMD} and \ref{it:sol integrability mild UMD} are meant in the sense of Remark \ref{rem: represent}.  The second part of Remark \ref{rem: represent} gives that if \ref{it:sol integrability UMD} or  \ref{it:sol integrability mild UMD} holds, then the stochastic process $\sigma(\cdot,u)$,  respectively  $S(t,\cdot)\sigma(\cdot,u)$,  represents a strongly measurable $\xi\col \Om\to \gluy$. 

The UMD requirement for $Y$ in \ref{it:sol integrability UMD} and \ref{it:sol integrability mild UMD}  can be weakened a bit,   requiring that $Y$ is one of the slightly more general spaces studied in \cite{coxveraar11,coxgeiss21} (including $L^1$, which is not UMD \cite[Ex.\ 4.7]{coxveraar11}). 
The stochastic integration aspects needed for the results in this paper carry over to such spaces. 
\end{remark}

\begin{remark}
In conditions \ref{it:integrated eq}, \ref{it:integrated eq ana weak}, \ref{it:integrated eq mild} and \ref{it:integrated eq mild weak}, the stochastic integrals denote those constructed on $(\Om,\overline{\F},\overline{\P},(\overline{\F_t}))$. Note that $\sigma(s,u)$ is not everywhere defined on $\Om$, but it is indistinguishable from $\sigma(s,u\one_{\B}(u))$, which is  joint measurable and $(\overline{\F_t})$-adapted by the upcoming Lemma \ref{lem: mble L^p subsets}. By Lemma \ref{lem:sol on completion}, $W$ is also a $U$-cylindrical Brownian motion on the completed filtered probability space. Combined with the assumed integrability of the coefficients and using Corollary \ref{cor: mble L^p subsets}, we conclude that all stochastic integrals are well-defined on $(\Om,\overline{\F},\overline{\P},(\overline{\F_t}))$.  
\end{remark}

\begin{remark}
Sometimes, solutions $u$ are defined to be progressively measurable, so that $b(\cdot,u)$ and $\sigma(\cdot,u)$ are progressively measurable. However, the weaker condition in Definition \ref{def: SPDE C-sol} of $u$ being only measurable and adapted is equivalent.  
Again, one can turn to $u\one_\B(u)$, which is continuous everywhere and $(\overline{\F_t})$-adapted and thus  $(\overline{\F_t})$-progressively measurable (Lemma \ref{lem: mble L^p subsets}).
\end{remark}

\begin{remark}\label{rem: L(U,Y) valued}
  In the case of analytically weak solutions and mild weak solutions with only weak integrability conditions for the coefficients (\ie \ref{it:sol integrability ana weak} or \ref{it:sol integrability mild weak}), Assumption \ref{ass} can be weakened a bit: $\sigma$ is allowed to take values in $\LL(U,Y)$ instead of $\gamma(U,Y)$ and the measurability for $\sigma\col I_T\times\B\to\LL(U,Y)$ in \eqref{eq:b sigma mble} and \eqref{eq:b sigma adapted} needs to hold $U$-strongly, since then, $z^*\circ \sigma\col I_T\times \B\to U^*$ has the relevant measurability properties. 
\end{remark}

\begin{remark}\label{rem: integrability coeffs flex} 
One can add more integrability conditions to $\cond$ besides \ref{it:sol integrability}--\ref{it:sol integrability ana weak} and \ref{it:sol integrability mild}--\ref{it:sol integrability mild weak}. For example, if $S_b$ and $S_\sigma$ are Polish spaces such that $S_b\into L^0({\bar{I}_T};Z)$ and $S_\sigma\into L^0({\bar{I}_T};\guy)$, one can add the condition that $\P$-a.s.: $b(\cdot,u)\in S_b$ and $\sigma(\cdot,u)\in S_\sigma$. Analogously, one can require that $\P$-a.s.:  $S(t,\cdot)b(\cdot,u)\in S_b^t\into L^0({\bar{I}_t};Z)$ and $S(t,\cdot)\sigma(\cdot,u)\in S_\sigma^t\into L^0({\bar{I}_t};\guy)$ for $t\in \bar{I}_T$ and Polish spaces $S_b^t$, $S_\sigma^t$.  
The proofs of Lemma \ref{lem: mble L^p subsets} and Corollary \ref{cor: mble L^p subsets}, hence their application in the proof of Theorem \ref{th:YW SPDE}, remain valid.  
\end{remark} 

Now let us give some examples of settings that fit in our framework.  
Of course, many more settings are possible. New applications were more generally described in  \ref{it:intro1}--\ref{it:intro4} in the introduction. 

\begin{example}\label{ex:applic}  
The following settings fit Assumption \ref{ass}  and Definition \ref{def: SPDE C-sol}. Hence, the Yamada--Watanabe--Engelbert theorem (Theorem \ref{th:YW SPDE}) and the classical Yamada--Watanabe theorem (Corollary \ref{cor: single F}) apply. 
\begin{enumerate}[label=\alph*),ref=\ref{ex:applic}\alph*)]
\item\label{ex:variational} The variational setting of the Yamada--Watanabe theorem \cite{rock08}. Assumption \ref{ass} holds with   
$T\ceqq \infty$,  $\B\ceqq C(\bar{I}_\infty;H)\cap L_{\loc}^1(\bar{I}_\infty;V)$, $Y\ceqq H$, $Z\ceqq E$, and we choose $\cond\ceqq\{\ref{it:sol integrability},\ref{it:integrated eq}\}$. 
  \item\label{ex:ana strong}   The strong analytic settings of \cite[\S3, \S4]{AV22nonlinear2} for nonlinear parabolic stochastic evolution equations in Banach spaces. Therein, $X_0$ and $X_1$ are UMD Banach spaces with type 2 such that $X_1\into X_0$ and $T\in(0,\infty]$. In practice, one can additionally assume that $X_0$ and $X_1$ are separable, since solutions are defined to be strongly measurable. 
Now, for \cite[Ass.\ 3.1, Ass.\ 3.2, Def.\ 3.3]{AV22nonlinear2}, we put $\cond\ceqq\{\ref{it:sol integrability},\ref{it:integrated eq}\}$,  $\X\ceqq (X_0,X_1)_{1/2,2}$ and
\begin{align*}
&\B\ceqq C(\bar{I}_T;X_0)\cap L_{\loc}^2(\bar{I}_T;X_1),\quad  Y\ceqq \X, \quad Z\ceqq X_0 
\end{align*} 
Similarly, for \cite[\S4 Hyp.\ (H), Def.\ 4.3]{AV22nonlinear2}, we put $X^{\Tr}_{\kappa,p}\ceqq(X_0,X_1)_{1-\frac{1+\kappa}{p},p}$ and
\begin{align}\label{eq: choices weighted Lp}
&\B\ceqq C(\bar{I}_T;X^{\Tr}_{\kappa,p})\cap L_{\loc}^p(\bar{I}_T,w_\kappa;X_1),\quad Y\ceqq \X, \quad Z\ceqq X_0,
\end{align}  
where  $p\in(1,\infty)$, $\kappa\in[0,p-1)$ and $w_\kappa(s)\ceqq s^\kappa$ is a weight. In the setting here, more choices of $p,\kappa$ and type of weights are allowed. 
We refer to \cite[\S2.2, \S3]{AV22nonlinear1} for details on the real interpolation spaces  $\X$ and $X^{\Tr}_{\kappa,p}$.  

The Banach spaces $Y$ and $Z$ are separable by the separability of $X_1$ and $X_0$. 
Note that $\B$ is a Polish space, in the second case completely metrized by  
\[
\textstyle{\rho(u,v)\ceqq \sum_{k=1}^\infty 2^{-k}\big[ \big(\|u-v\|_{C(\bar{I}_{k\wedge T};X^{\Tr}_{\kappa,p})}+\|u-v\|_{L^p(I_{k\wedge T},w_{\kappa};X_1)} \big)\wedge 1 \big], }
\]
and similarly for the first case. The coefficients in \cite[\S4 Hyp.\ (H)]{AV22nonlinear2} are of the Markovian form of Lemma \ref{lem:coeffs markov} (with $\tilde{X}\ceqq X_1$), thus fit in our framework.  
 
In \cite[Def.\ 4.3]{AV22nonlinear2}, the integrability conditions in the solution notion  are formulated slightly differently, \eg one requires $F(\cdot,u)\in L_\loc^p(\bar{I}_T;w_\kappa;Z)$ and $G(\cdot,u)\in L_\loc^p(\bar{I}_T;w_\kappa; \guy)$. Such conditions are covered by Remark \ref{rem: integrability coeffs flex}, and can be added to $\cond=\{\ref{it:sol integrability},\ref{it:integrated eq}\}$.

\item\label{ex:schrodinger} SPDEs with less classical solution spaces, such as wave equations, Schr\"odinger equations and other equations for which Strichartz estimates are used.  
When the solution space is not standard, there is often no directly applicable reference for the Yamada--Watanabe theorem.

For example, for the Schr\"odinger equation in \cite[(2.3), Def.\ 2.2]{brzezniak22}, solutions  are  required to be weakly continuous. Our setting and results apply directly. We let  $\B\ceqq C(\bar{I}_T;Z)\cap L^2(\bar{I}_T;Y)$, with $T<\infty$ and $Z\ceqq H^{-1}(M;\C)$ and $Y\ceqq H^1(M;\C)$ real Banach spaces (as in Assumption \ref{ass}), see also \cite[\S 2.5, \S 7]{cazenaveharaux}. 
As before,  $b$ and $\sigma$ are of the form \eqref{eq: b sigma markov}. 

Since   solutions in \cite{brzezniak22} are required to be in $L^2(\Om;L^2(\bar{I}_T;Y))$, we use condition \ref{it:moment cond} with $p=2$ and $f(u)=\|u\|_{\tilde{\B}}$, where $\B\into\tilde{\B}\ceqq L^2(\bar{I}_T;Y)$ . 

To encode weak continuity, we use condition \ref{it:as cond} with $E\ceqq C(\bar{I}_T;Y_{\mathrm{w}})\cap \B$, where $Y_{\mathrm{w}}$ denotes $Y$ equipped with the weak topology. We need to argue why $E\in \BB(\B)$. 
Note that $C(\bar{I}_T;Y_{\mathrm{w}})$ is not metrizable with respect to the compact-open topology, but for each $a>0$, 
\[
\textstyle{K_a\ceqq \{f\in C(\bar{I}_T;Y_{\mathrm{w}}):\sup_{t\in[0,T]}\|f(t)\|_Y\leq a\}}
\]
is Polish with the subspace topology, since the closed unit ball in $Y$ with the weak topology is Polish, see also \cite[Rem.\ 4.2]{brzezniakondrejatseidler}. 
Moreover, using Rellich's theorem ($M$ is a compact smooth manifold), one can show that 
\[
K_a\into L^0(\bar{I}_T;Z) \; \text{ and }\; C(\bar{I}_T;Y_{\mathrm{w}})=\cup_{a\in\N}K_a.
\]
The above implies $C(\bar{I}_T;Y_{\mathrm{w}})\in \BB(L^0(\bar{I}_T;Z))$, applying Kuratowski's theorem to each Polish space $K_a$ and using Remark \ref{rem:L0}. 
Also, $\B\into L^0(\bar{I}_T;Z)$, which gives $E=C(\bar{I}_T;Y_{\mathrm{w}})\cap \B\in\BB(\B)$ (as a preimage under the embedding), as required. 
Put $\cond=\{\ref{it:sol integrability},\ref{it:integrated eq},\ref{it:moment cond},\ref{it:as cond}\}$. 
  
\item\label{ex:positive}
SPDEs with positivity conditions.  Condition \ref{it:as cond} can be used to encode positivity conditions, or the Polish space $\B$ can be chosen as an intersection with a space of positive functions. 
For the second strategy, recall that an open or closed subset of a Polish space is again Polish.  
One may \eg consider the positive solutions to the stochastic thin-film equations in  \cite{agrestisauerbrey24}. The setting fits by choosing spaces as in \eqref{eq: choices weighted Lp}, but now we intersect $\B$ with $C([0,T];C(\mathbb{T}^d;(0,\infty))$, using that the latter is an  open subset of the Polish space $C([0,T];C(\mathbb{T}^d))$,   and  $\B\into C([0,T];C(\mathbb{T}^d))$ in this setting.

More generally, for  $T\in(0,\infty)$, $K\subset \R^d$ compact and $A\subset \R$ open, the set $C(\bar{I}_T;C(K;A))$ is open in in the Polish space $C(\bar{I}_T;C(K))$. Now, if ${\B}$ is a Polish space such that  ${\B}\into C([0,T];C(K))$, then one can use the new Polish space ${\B}\cap C(\bar{I}_T;C(K;A))$ (as an open subset of ${\B}$). 
 
Similarly, for  $T\in(0,\infty]$, $\mathcal{O}\subset \R^d$ any subset and $A\subset \R$ closed, $C(\bar{I}_T;C(\mathcal{O};A))$ is closed in $C(\bar{I}_T;C(\mathcal{O}))$ with the compact-open topology. 
Indeed, convergence of $(f_n)\subset C(\bar{I}_T;C(\mathcal{O};A))$ in $C(\bar{I}_T;C(\mathcal{O}))$ implies convergence of $(f_n(t)(x))$ for all $t\in \bar{I}_T, x\in \mathcal{O}$, thus limits are $A$-valued since $A$ is closed. 
If ${\B}$ is a Polish space such that ${\B}\into C(\bar{I}_T;C(\mathcal{O}))$, then one may use the Polish space ${\B}\cap  C(\bar{I}_T;C(\mathcal{O};A))$ (as a closed subset of $\B$). If $\mathcal{O}\subset \R^d$ is locally compact (e.g.\ closed or open), then $C(\bar{I}_T;C(\mathcal{O};A))$ is itself Polish (see \cite[Ex.\ A.10]{kerrli}).  
  
\item Analytically strong solutions to SPDEs with space-time white noise. The spaces $\B$, $Y$ and $Z$ in Assumption \ref{ass} also allow us to work with fractional Sobolev spaces: we may use $X_0\ceqq H^{-1-s,q}(\mathbb{T})$, $X_1 \ceqq H^{1-s,q}(\mathbb{T})$ with $s\in(0,1)$ and $q\in[2,\infty)$,  
      and define $\B,Y,Z$  as in \eqref{eq: choices weighted Lp} with $p\in(2,\infty)$ and $\kappa\in[0,\frac{p}{2}-1)$. We let  $\cond\ceqq\{\ref{it:sol integrability},\ref{it:integrated eq}\}$ and may add additional integrability conditions for the coefficients by Remark \ref{rem: integrability coeffs flex}. This covers \eg the equations with space-time white noise  in  \cite[\S 5.5]{AV22nonlinear1}. 
\item Analytically weak solutions such as those of Kunze's work \cite{kunze13}. See Example  \ref{ex:kunze setting}.  
\item Mild solutions to SPDEs in 2-smooth Banach spaces as in Ondrej\'at's work \cite{ondrejat04}. See  Example \ref{ex:ondrejat setting}.   
\item Mild solutions to SPDEs in Banach spaces with space-time white noise. For examples in M-type 2 spaces, see \cite[\S6.2]{brzezniak97}. Examples in UMD spaces are the parabolic equations in \cite[p.\ 983]{NVW08} and the general setting therein \cite[p.\ 969, p.\ 957]{NVW08}.  
  Let $E$ be a separable UMD Banach space and put $\cond\ceqq\{\ref{it:sol integrability mild UMD}, \ref{it:integrated eq mild}\}$ and 
  \begin{align*}
  \B\ceqq C(\bar{I}_{T_0};E)\cap L_{\loc}^1(\bar{I}_{T_0};E_\eta),\quad  Y\ceqq E_{-\theta_B}, \quad Z\ceqq E_{-(\theta_F\vee\theta_B)}. 
\end{align*}
Here, $Z$ is chosen such that the setting fits Assumption \ref{ass}. In \cite{NVW08} the coefficient $b$ is originally measurable as an $E_{-\theta_F}$-valued map, but since  $E_{-\theta_F}\into Z$ is Borel measurable, it is also measurable as a $Z$-valued map.  
\end{enumerate} 
\end{example}

We conclude this section with some words on the choice of path space $\W$ for the $U$-cylindrical Brownian motion. First of all, one may use $\W_0\ceqq \{w\in\W:w(0)=0\}$ instead of $\W$ in the notion of $\cond$-strong solution. This is unimportant: $W\in\W_0$ \as and  $\W_0\into \W$. Continuity of the embedding implies that $F|_{ Z\times \W_0}$ is measurable whenever $F\col Z\times\W\to\B$ is measurable. Conversely, if $F_0\col Z\times \W_0\to\B$ is measurable, then so is the trivial extension $F_0\one_{Z\times\W_0}\col Z\times\W\to \B$, by Kuratowski's theorem. A slightly more important choice is the following.

\begin{remark}\label{rem: Q-wiener}
Alternatively, one can view $W$ as a random variable in $\W_1\ceqq C(\bar{I}_T;U_1)$, for some larger separable Hilbert space $U_1\supset U$, such that $U\into U_1$ through a Hilbert--Schmidt embedding $J\in\mathcal{L}_2(U,U_1)$. 
If we fix an orthonormal basis  $(e_k)$ for  $U$, and we let $W^k$ be a continuous modification of $W(\one_{(0,\cdot]}\otimes e_k)$ (see \eqref{eq: def Rinfty BM} and below), then 
\begin{equation}\label{eq: def W_1}
  {W_1}(t)\ceqq \sum_{k=1}^{\infty}W^k(t)Je_k
\end{equation}
defines a $Q_1$-Wiener process in $U_1$ with $Q_1=JJ^*\in\LL(U_1)$ nonnegative definite, symmetric 
and of trace class, see \cite[Prop.\ 2.5.2]{liurockner15}.  
Using Doob's maximal inequality, one can show that for all $T_0\in(0,\infty)$, the series in \eqref{eq: def W_1} converges in $L^2(\Om;C([0,T_0];U_1))$ (see \cite[p.\ 51]{liurockner15}). This ensures that $W_1$ is a.s.\ continuous and that the series converges in probability in $C([0,T_0];U_1)$. Since each $W^k(\cdot)Je_k$ is a symmetric $C([0,T_0];U_1)$-valued random variable,  the series in \eqref{eq: def W_1} even converges a.s.\ in $C([0,T_0];U_1)$ by the It\^o--Nisio theorem \cite[Cor.\ 6.4.2]{HNVWvolume2}, and hence a.s.\ in $\W_1=C(\bar{I}_T;U_1)$ (taking $T_0\in\N$ if $T=\infty$). 

Equipping $\W_1$ with the topology of uniform convergence on compact sets, it becomes a Polish space, completely metrized by $
d(w,z)\ceqq \sum_{k=1}^\infty 2^{-k}\left[\|w-z\|_{C(\bar{I}_{k\wedge T};U_1)} \wedge 1 \right]$. 
Now, $W_1\col\Om\to \W_1$ is a random variable by Lemma \ref{lem: generating maps}: $W_1(t)\col \Om\to U_1$ is a random variable for each $t\in\bar{I}_T$ 
and the continuous maps $\pi_{U_1}^t$ (see \eqref{eq: point evaluation}) separate the points in $\W_1$. 

The results in this paper also hold for $W,\W$ replaced by $W_1,\W_1$ respectively, as we will explain in Remark \ref{rem: mathbbW choice}.  
\end{remark}

\section{The Yamada--Watanabe--Engelbert theorem for SPDEs}\label{sec:YW SPDE}

We now turn to the main goal of this paper: to  prove the Yamada--Watanabe--Engelbert theorem for SPDE solutions as defined in Definition \ref{def: SPDE C-sol}. Our result is stated below.  Here, in part  \ref{it:YW_Fmu}, $\wien$ denotes the unique law of a continuous $\R^\infty$-Brownian motion (Remark \ref{rem:wien measure}) and the indicated $\sigma$-algebras will be defined in \eqref{eq:temporal}.  

As before, $\cond$ is allowed to be any  subcollection of  \ref{it:sol integrability}-\ref{it:as cond} or of \ref{it:moment cond}-\ref{it:integrated eq mild weak}. However, the theorem is only useful if $\cond$ contains at least one equation, \ie condition \ref{it:integrated eq}, \ref{it:integrated eq ana weak}, \ref{it:integrated eq mild} or \ref{it:integrated eq mild weak}. 
Applications are explained in Examples \ref{ex:choices C} and \ref{ex:applic}.

\begin{theorem}\label{th:YW SPDE}
Suppose that Assumption \ref{ass} holds. Let $\cond$ be a subcollection of conditions \ref{it:sol integrability}-\ref{it:as cond}. Let $\mu\in\PP(Z)$. 
The following are equivalent:  
\begin{enumerate}[label=\textup{(\alph*)},ref=\textup{(\alph*)}] 
\item \label{it:YW1SPDE} 
    There exists a $\cond$-weak solution to $\eqref{SPDE}$ with $\law(u(0))=\mu$ and pathwise uniqueness holds for $\cond$-weak solutions with inital law $\mu$.  
\item \label{it:YW2SPDE} There exists a $\cond$-strong solution to $\eqref{SPDE}$ with $\law(u(0))=\mu$ and joint weak uniqueness holds for $\cond$-weak solutions with initial law $\mu$.
\item \label{it:YW3SPDE} Joint weak uniqueness holds for  $\cond$-weak solutions with initial law $\mu$ and `strong existence holds': for any filtered probability space $(\Om,\F,\P, (\F_t))$ and any $U$-cylindrical Brownian motion $W$ on $(\Om,\F,\P, (\F_t))$, there exists $u$ such that $(u,W,(\Om,\overline{\F},\overline{\P}, (\overline{\F_t}))$ is a $\cond$-weak solution with initial law $\mu$.   
    
    Even more: there exists a single measurable map $F_\mu\col Z\times \W\to\B$ such that for any filtered probability space $(\Om,\F,\P, (\F_t))$, for any $U$-cylindrical Brownian motion $W$ on $(\Om,\F,\P, (\F_t))$ and for any $u_0\in  L^0_{\F_0}(\Om;Z)$ with $\law(u_0)=\mu$, $(F_{\mu}(u_0,W),W,(\Om,\overline{\F},\overline{\P}, (\overline{\F_t})))$ is a $\cond$-strong  solution to $\eqref{SPDE}$ with $F_{\mu}(u_0,W)(0)=u_0$ a.s.  
\end{enumerate}  
Moreover, 
\vspace{-.2cm}
\begin{enumerate}[label=\textup{($\ast$)},ref=\textup{($\ast$)}] 
\item\label{it:YW_Fmu}
 {the map $F_\mu$ in \ref{it:YW3SPDE} is $\overline{\BB_t^{S_2}}^{\mu\otimes\wien}/\BB_t^{S_1}$-measurable for all $t\in\bar{I}_T$.}
\end{enumerate}
The same statements hold if $\cond$ is a subcollection of conditions \ref{it:moment cond}-\ref{it:integrated eq mild weak} and equation \eqref{SPDE} is replaced by \eqref{SPDE mild}.  
\end{theorem}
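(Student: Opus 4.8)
The plan is to deduce Theorem~\ref{th:YW SPDE} from the abstract Yamada--Watanabe--Engelbert theorem (Theorem~\ref{th:YW}) by correctly translating the SPDE notion of solution into Kurtz' language. The first and most substantial task is to identify the right data $(S_1,S_2,\Gamma,\CC,\nu)$: we set $S_1\ceqq\B$ (the path space of the solution), $S_2\ceqq Z\times\W$ (initial datum together with the $\R^\infty$-Brownian motion), $\nu\ceqq\mu\otimes\wien\in\PP(S_2)$, and take $\xi\ceqq u$, $\eta\ceqq(u(0),W)$. The compatibility structure $\CC$ is the temporal one, given by the $\sigma$-algebras $\BB_t^{S_1}\ceqq\BB_t(\B)$ from \eqref{eq:def B_t(B^T)} and $\BB_t^{S_2}$ generated by $\pi_Z^0$ on $Z$ and by $w\mapsto w_k(s)$, $s\le t$, on $\W$ (this is the indexing promised in \eqref{eq:temporal}). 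The crucial conceptual step is to define $\Gamma\subset\PP(\B\times Z\times\W)$ as exactly the set of laws $\law(u,u(0),W)$ for which the chosen conditions $\cond$ hold: this requires showing that each of \ref{it:sol integrability}--\ref{it:as cond} can be read off from the joint law $\law(u,W)$ alone, rather than from the particular probability space. For the pathwise (integrated) equations \ref{it:integrated eq}, \ref{it:integrated eq ana weak}, \ref{it:integrated eq mild}, \ref{it:integrated eq mild weak}, this is precisely where the measurable representation $I$ of the stochastic integral (Theorem~\ref{th:stoch int rep type 2 or UMD}) enters: it lets us express the stochastic integral $\int_0^\cdot\sigma(s,u)\dd W$ as a fixed measurable function of $(\sigma(\cdot,u(\om)),W(\om),\law(\sigma(\cdot,u),W))$, so that ``$u$ solves the equation'' becomes a Borel condition on $\law(u,W)\in\PP(\B\times\W)$, and hence defines a subset $\Gamma$ of $\PP(S_1\times S_2)$.

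The second task is to check the two structural hypotheses of Theorem~\ref{th:YW}: that $\CC$-compatibility of $\xi=u$ with $\eta=(u(0),W)$ is equivalent to $u$ being adapted and $W$ being a cylindrical Brownian motion with respect to a filtration making $(u(0),W)$ have independent increments after time $t$; and that $D\ceqq\{(z,z)\}$ lies in the product $\sigma$-algebra (automatic since $\B$, $Z$, $\W$ are Polish, so $S_1\times S_2$ is separable metric and Lemma~\ref{lem:cond expec dist} applies). The adaptedness/compatibility dictionary is handled by Lemmas~\ref{lem:SPDE sol is compatible} and \ref{lem:joint compatible indep incr} (referenced in the excerpt), giving the precise correspondence: $(u,W,(\Om,\F,\P,(\F_t)))$ is a $\cond$-weak solution with $\law(u(0))=\mu$ if and only if $(u,(u(0),W))$ is a weak solution for $(\Gamma,\CC,\nu)$ in Kurtz' sense, after passing to the completion and the augmented filtration generated by $u$ and $(u(0),W)$. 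Under this dictionary, pathwise uniqueness of $\cond$-weak solutions corresponds to pointwise uniqueness, joint weak uniqueness to joint uniqueness in law, and Theorem~\ref{th:YW}\ref{it:YW1Kurtz}$\Leftrightarrow$\ref{it:YW2Kurtz}$\Leftrightarrow$\ref{it:YW3Kurtz} then yields the equivalence of \ref{it:YW1SPDE}, \ref{it:YW2SPDE}, \ref{it:YW3SPDE}, with the single map $F_\mu$ of \ref{it:YW3SPDE} being the map $F\col S_2=Z\times\W\to S_1=\B$ produced by Theorem~\ref{th:YW}\ref{it:YW3Kurtz}. The ``even more'' clause --- that $F_\mu(u_0,W)$ is a $\cond$-strong solution on \emph{every} filtered space carrying a cylindrical Brownian motion and an $\F_0$-measurable $u_0$ with law $\mu$ --- follows because $(F_\mu(u_0,W),(u_0,W))$ is a weak solution for $(\Gamma,\CC,\nu)$ for any such data (the laws match $\mu\otimes\wien$ and compatibility holds), and then membership in $\SS_{\Gamma,\CC,\nu}$ forces $\cond$ by the definition of $\Gamma$; the equality $F_\mu(u_0,W)(0)=u_0$ a.s.\ comes from $\Gamma\subset\PP_\mu$-type marginal constraints encoded into $\Gamma$ (the condition that $u(0)$ equals the $Z$-coordinate of $\eta$).

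The measurability refinement \ref{it:YW_Fmu} --- that $F_\mu$ is $\overline{\BB_t^{S_2}}^{\,\mu\otimes\wien}/\BB_t^{S_1}$-measurable for each $t$ --- requires tracking the construction of $F$ through Blackwell--Dubins and \cite[Lem.~A.2]{kurtz07} used in the proof of Theorem~\ref{th:YW}. The idea is that, since the compatibility structure is temporal and the equation for $u$ on $[0,t]$ depends only on $W$ restricted to $[0,t]$ (the stochastic integral up to time $t$ is $\BB_t^\W$-measurable in $W$ by the representation $I$) and on $u(0)$, one can run the abstract construction ``time-slice by time-slice'': the regular conditional distribution of $u|_{[0,t]}$ given $(u(0),W)$ only charges $\overline{\BB_t^{S_2}}$, and hence the resulting deterministic selector inherits that measurability. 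Concretely, I would invoke the part of Kurtz' framework dealing with compatible strong solutions (the $\CC$-version referenced on \cite[p.~7]{kurtz14}), which already delivers a map with the adapted measurability, and note that \ref{it:YW_Fmu} is exactly the statement that $F_\mu$ respects the filtration $\CC$. The main obstacle I anticipate is the first task: verifying in full rigour that each condition in $\cond$ --- especially the UMD integrability conditions \ref{it:sol integrability UMD}, \ref{it:sol integrability mild UMD} phrased via $\gamma$-radonifying representations, and the pathwise equations involving the stochastic integral on the completed space --- is genuinely a Borel property of $\law(u,W)$; this is where the delicate interplay between the representation theorem, the scalar-representation characterisation of stochastic integrability (Remark~\ref{rem: represent}), and measurability of $\om\mapsto\sigma(\cdot,u(\om))$ into the relevant Polish space (the upcoming Lemma~\ref{lem: mble L^p subsets}) must all be combined. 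The mild case \eqref{SPDE mild} is handled identically, replacing $\sigma(\cdot,u)$ by $S(t,\cdot)\sigma(\cdot,u)$ and using that $S$ is a strongly measurable evolution family so that these composed integrands are still jointly measurable and adapted, hence the same representation $I$ and the same $\Gamma$-construction go through.
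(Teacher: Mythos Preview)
Your proposal is correct and follows essentially the same approach as the paper: the same choice of $(S_1,S_2,\CC,\nu)$, the same construction of $\Gamma$ from the integrability sets of Lemma~\ref{lem: mble L^p subsets}/Corollary~\ref{cor: mble L^p subsets} together with the law-level description of the equation via Corollary~\ref{cor:SPDE distributional unified} (plus the $\Gamma_0$-constraint $u(0)=u_0$), and the same dictionary between $\cond$-weak/strong solutions and Kurtz' notions through Lemmas~\ref{lem:SPDE sol is compatible} and \ref{lem:joint compatible indep incr}. For \ref{it:YW_Fmu}, the paper takes the second of your two suggested routes: rather than tracking Blackwell--Dubins, it invokes \cite[Prop.~2.13]{kurtz14} to obtain $\F_t^u\subset\F_t^\eta$ and then performs a Borel factorization (via \cite[Lem.~1.14]{kallenberg21} and Lemma~\ref{lem: mble version of map}) to produce a $\BB_t^{S_2}$-measurable $\nu$-version of $\Pi_t\circ F_\mu$.
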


The above theorem contains the analog of Theorem \ref{th:YW}. Furthermore,  \ref{it:YW_Fmu} states    additional measurability of the map $F_\mu$. In words, it tells us that if $u=F_\mu(u_0,W)$, then the paths of $u$ up till time $t$ are determined by the paths of $(u_0,W)$ up till time $t$. 

More properties of $F_\mu$ will be stated and proved  in Corollary \ref{cor: single F}. The latter   contains an analog of the classical Yamada--Watanabe theorem,  which is partly stronger and concerns all initial measures together,  instead of one fixed measure $\mu$ as in   Theorem \ref{th:YW SPDE}.  

In our notion of $\cond$-weak solution and in Theorem \ref{th:YW SPDE}, we work with arbitrary filtered probability spaces. 
Sometimes, it is preferable to consider only complete filtered probability spaces with complete, right-continuous filtrations, hereafter called \emph{usual filtered probability spaces}.  
Fortunately, the Yamada--Watanabe--Engelbert theorem remains true, by the next remark.

\begin{remark}\label{rem:sol on completion}  
Any $\cond$-weak solution induces a $\cond$-weak solution on a usual filtered probability space: if $(u,W,(\Om,\F,\P,(\F_t)))$ is a $\cond$-weak solution to \eqref{SPDE} or \eqref{SPDE mild}, then so is $(u,W,(\Om,\overline{\F},\overline{\P},(\bar{\F}_{t^+})))$, where $\bar{\F}_{t^+}\ceqq \cap_{h>0} \sigma(\overline{\F_{t+h}})$.

Indeed,  $u$ is  $(\bar{\F}_{t^+})$-adapted since $\F_t\subset \bar{\F}_{t^+}$, and by Lemma \ref{lem:sol on completion}, $W$ is a $U$-cylindrical Brownian motion with respect to $(\Om,\overline{\F},\overline{\P}, (\bar{\F}_{t^+}))$. Also, the conditions in $\cond$ do not depend on the filtration. 

Consequently, conditions \ref{it:YW1SPDE} and  \ref{it:YW2SPDE} of 
Theorem \ref{th:YW SPDE}  are equivalent to respectively:
\begin{enumerate}[label=\textup{(\alph*')},ref=\textup{(\alph*')}] 
  \item  \label{it:YW1usualSPDE}  
    On some usual probability space, there exists a $\cond$-weak solution to \eqref{SPDE}, respectively \eqref{SPDE mild}, with $\law(u(0))=\mu$ and pathwise uniqueness holds for $\cond$-weak solutions with inital law $\mu$ on usual filtered probability spaces.  
  \item  \label{it:YW2usualSPDE}  
    On some usual probability space, there exists a $\cond$-strong solution to $\eqref{SPDE}$ with $\law(u(0))=\mu$ and joint weak uniqueness holds for $\cond$-weak solutions with initial law $\mu$ on usual filtered probability spaces.
\end{enumerate} 
Combined with Theorem \ref{th:YW SPDE}, we obtain the useful implications of the Yamada--Watanabe--Engelbert theorem: \ref{it:YW1usualSPDE}$\Rightarrow$\ref{it:YW3SPDE} and   \ref{it:YW2usualSPDE}$\Rightarrow$\ref{it:YW1SPDE}. 

Of course, one may also replace `usual' by `complete' in the above, interpolating between \ref{it:YW1SPDE} and \ref{it:YW1usualSPDE} and between \ref{it:YW2SPDE} and \ref{it:YW2usualSPDE}.  
\end{remark}

Our main result, Theorem \ref{th:YW SPDE}, will be derived from Theorem \ref{th:YW}. To apply the latter, the crucial step is to define $\Gamma\subset \PP(S_1\times S_2)$ suitably, so that it captures  the set of conditions $\cond$ for the $\cond$-weak solutions to the SPDE. 
To this end, we need to prove that all possible conditions in $\cond$, \ie \ref{it:sol integrability}-\ref{it:integrated eq mild weak}, can be expressed in terms of the joint distribution of $u$ and $W$.  

In Subsection \ref{sub:integrability}, we make the necessary preparations for the integrability conditions \ref{it:sol integrability}-\ref{it:sol integrability ana weak} and \ref{it:sol integrability mild}-\ref{it:sol integrability mild weak}. In Subsection \ref{sub:equation}, we will cover the stochastic equations of conditions \ref{it:integrated eq},\ref{it:integrated eq ana weak},\ref{it:integrated eq mild} and \ref{it:integrated eq mild weak}. Then, in Subsection \ref{sub:main results}, we prove the Yamada--Watanabe--Engelbert theorem (Theorem \ref{th:YW SPDE}) as well as the Yamada--Watanabe theorem with a more extensive notion of `unique strong solution' (Corollary \ref{cor: single F}). In two final remarks, we relate our results to the Yamada--Watanabe results of Ondrej\'at and of Kunze. 

\subsection{Integrability conditions}\label{sub:integrability}

To include the integrability conditions amongst \ref{it:sol integrability}-\ref{it:integrated eq mild weak} in $\Gamma$, we need to prove Borel measurability of certain subsets of $\B$. We will prove this in the following lemma and corollary.  

\begin{lemma}\label{lem: mble L^p subsets} Let $T\in(0,\infty]$, let $\B$ be a Polish space, let $U$ be a separable Hilbert space, let $Y$ and $Z$ be separable Banach spaces with  $Y\into Z$ and let $b$ and $\sigma$ satisfy \eqref{eq:b sigma mble} and \eqref{eq:b sigma adapted}. 

Let $u\col \bar{I}_T\times \Om\to Z$ be $\BB(\bar{I}_T)\otimes\F/\BB(Z)$-measurable. 
Then $\{\om\in\Om:u(\cdot,\om)\in \B\}\in \F$.  Moreover
\begin{align*}
&B\ceqq\{v\in \B:b(\cdot, v)\in L^1_\loc(\bar{I}_T;Z)\}\in \BB(\B),\\ 
&D\ceqq\{v\in \B:\sigma(\cdot, v)\in L^2_\loc(\bar{I}_T;\gamma(U,Y))\}\in \BB(\B),\\
&D^\gamma_t\ceqq\{v\in \B:\sigma(\cdot, v)\in \gamma(L^2(I_t;U),Y)\}\in\BB(\B)\; \text{ for all }  t\in\bar{I}_T.
\end{align*} 
Suppose in addition that $u$ is $({\F_t})$-adapted and $u\in \B$ a.s. 

Then $u$ is indistinguishable from $u\one_{\B}(u)\col \bar{I}_T\times \Om\to Z$, which is everywhere in $\B$, $\F/\BB(\B)$-measurable  and  $(\overline{\F_t})$-progressively measurable. In particular, $u\one_{\B}(u)$ is $\BB(\bar{I}_T)\otimes\overline{\F}/\BB(Z)$-measurable and $(\overline{\F_t})$-adapted. 
Also, $b(\cdot,u\one_{\B}(u))$ and $\sigma(\cdot,u\one_{\B}(u))$ are $\BB({I}_T)\otimes {\F}$-measurable and $(\overline{\F_t})$-adapted. 
\end{lemma}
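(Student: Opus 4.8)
The statement splits into several assertions of increasing complexity, and I would prove them in the following order. \emph{Step 1: the set $\{u(\cdot,\om)\in\B\}$ is measurable.} Since $\B\into C(\bar I_T;Z)$ is a continuous injection between Polish spaces, Lusin--Souslin (Kuratowski's theorem, \cite[Th. 15.1]{kechris95}) gives that the image of $\B$ in $C(\bar I_T;Z)$ is Borel and the embedding is a Borel isomorphism onto its image. The map $\om\mapsto u(\cdot,\om)\in C(\bar I_T;Z)$ is $\F/\BB(C(\bar I_T;Z))$-measurable: by the joint measurability of $u$ in $(t,\om)$ and continuity in $t$ (which holds $\P$-a.s.\ once we know $u\in\B$ a.s., but here we only need measurability; one argues via the countable family of point evaluations $\pi_Z^s$ for $s$ in a countable dense set together with Lemma~\ref{lem: generating maps}, using that $u$ being jointly measurable forces each $u(s,\cdot)$ to be $\F$-measurable, and continuity is part of the target space structure — alternatively one replaces $C(\bar I_T;Z)$ by $L^0(\bar I_T;Z)$ and uses Remark~\ref{rem:L0}). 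Pulling back the Borel set $\B\subset C(\bar I_T;Z)$ then yields $\{u(\cdot,\om)\in\B\}\in\F$.

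\emph{Step 2: measurability of $B$, $D$, $D^\gamma_t$.} For $B$: the map $(s,v)\mapsto b(s,v)\in Z$ is $\BB(I_T)\otimes\BB(\B)/\BB(Z)$-measurable by \eqref{eq:b sigma mble}, hence $(s,v)\mapsto \|b(s,v)\|_Z$ is jointly measurable, so by Tonelli $v\mapsto \int_0^t\|b(s,v)\|_Z\dd s$ is $\BB(\B)$-measurable for each $t$. Then
\[
B=\bigcap_{t\in\bar I_T\cap\Q}\Big\{v\in\B:\int_0^t\|b(s,v)\|_Z\dd s<\infty\Big\}\in\BB(\B),
\]
using that $L^1_\loc$ on $\bar I_T$ is detected by rational times (if $T<\infty$ one intersection suffices). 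For $D$ one argues identically with $\|\sigma(s,v)\|_{\gamma(U,Y)}^2$, which is jointly measurable by \eqref{eq:b sigma mble} and the separability of $\gamma(U,Y)$. For $D^\gamma_t$ the situation is more delicate because $\gamma(L^2(I_t;U),Y)$ is \emph{not} simply an $L^p$-space of $\sigma$; here I would use the characterization via the $\gamma$-norm of partial sums: by \cite[Th. 5.9]{NVW07} (or \cite[Lem. 2.7, Rem. 2.8]{NVW07}) $\sigma(\cdot,v)$ represents an element of $\gamma(L^2(I_t;U),Y)$ iff $\sup_N \|\sigma(\cdot,v) \text{ restricted to finite partitions/finite rank approximants}\|_{\gamma}$ is finite, and each such finite-stage quantity is a Borel function of $v$ because $\sigma(\cdot,v)$ depends Borel-measurably on $v$ into $L^0(I_t;\gamma(U,Y))$ and the $\gamma$-norm of a finite sum is a continuous function of its (finitely many) coefficients. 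Taking a countable supremum over a suitable countable indexing set (partitions into rational intervals, rational-coefficient vectors in a countable dense subset of the relevant Gaussian sums) expresses $D^\gamma_t$ as $\{v:\text{this countable sup}<\infty\}\in\BB(\B)$. I expect this to be the main obstacle: writing the abstract condition ``$\sigma(\cdot,v)\in\gamma(L^2(I_t;U),Y)$'' as a countable Borel condition requires invoking the right approximation/representation result from \cite{NVW07} and being careful that the approximants are genuinely Borel in $v$.

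\emph{Step 3: the modification $u\one_\B(u)$.} Once Step~1 gives $N\ceqq\{u(\cdot,\om)\notin\B\}\in\F$ with $\P(N)=0$, the process $u\one_\B(u)$ agrees with $u$ off $N$, hence is indistinguishable from $u$; and it takes values in $\B$ everywhere. For the measurability claims: the map $\om\mapsto (u\one_\B(u))(\cdot,\om)$ equals $u(\cdot,\om)$ on $\Om\setminus N$ and the constant value (say $0$, viewed in $\B$ — note $0\in\B$ since $\B$ is a linear-ish space; if not, pick any fixed element of $\B$) on $N$, so it is $\overline{\F}/\BB(\B)$-measurable as a modification of an $\F$-measurable map on a full set patched on a null set, hence $\F$-measurable is not quite right — it is $\overline\F$-measurable, matching the statement. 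Adaptedness: since $u$ is $(\F_t)$-adapted, for each $t$ the map $\om\mapsto u(t,\om)$ is $\F_t$-measurable, and $\om\mapsto (u\one_\B(u))(t,\om)$ differs from it on $N$, so it is $\overline{\F_t}$-measurable; combined with everywhere-continuity in $t$ this gives that $u\one_\B(u)$ is $(\overline{\F_t})$-progressively measurable by the standard fact that an adapted process with right-continuous (here continuous) paths is progressive. The remaining $\BB(\bar I_T)\otimes\overline\F/\BB(Z)$-measurability and $(\overline{\F_t})$-adaptedness follow immediately from progressive measurability. Finally, $b(\cdot,u\one_\B(u))$ and $\sigma(\cdot,u\one_\B(u))$: these are compositions of the jointly measurable maps $b,\sigma$ (by \eqref{eq:b sigma mble}) with $(s,\om)\mapsto (s,(u\one_\B(u))(\cdot,\om))\in I_T\times\B$, which is $\BB(I_T)\otimes\overline\F/\BB(I_T)\otimes\BB(\B)$-measurable by the above, giving joint $\BB(I_T)\otimes\overline\F$-measurability; and $(\overline{\F_t})$-adaptedness of $b(\cdot,u\one_\B(u))$, $\sigma(\cdot,u\one_\B(u))$ follows from \eqref{eq:b sigma adapted} together with the fact, established in Lemma~\ref{lem:coeffs markov}'s style of argument, that $\BB_t(\B)$-measurability of $b(t,\cdot)$ composed with the $\overline{\F_t}$-measurable, $\BB_t(\B)$-valued (because it only depends on path values up to time $t$) random element $u\one_\B(u)$ produces an $\overline{\F_t}$-measurable random variable.
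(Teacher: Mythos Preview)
Your overall structure is right, but two places diverge from the paper and one place falls short of what is actually claimed.

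\textbf{Step 1.} The $C(\bar I_T;Z)$ route does not work: $u$ is only assumed jointly measurable, not pathwise continuous, so $\om\mapsto u(\cdot,\om)$ need not land in $C(\bar I_T;Z)$ at all. The alternative you mention, through $L^0(I_T;Z)$ via Lemma~\ref{lem:L^0} and Remark~\ref{rem:L0}, is exactly what the paper does and is not optional here.

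\textbf{Step 2, $B$ and $D$.} Your Tonelli argument is a valid and more elementary alternative. The paper instead shows (again by Lemma~\ref{lem:L^0}) that $\hat b\col\B\to L^0(I_T;Z)$ and $\hat\sigma\col\B\to L^0(I_T;\gamma(U,Y))$ are measurable, and then pulls back $L^1_\loc$ and $L^2_\loc$, which are Borel in $L^0$ by Kuratowski. Your version avoids the abstract embedding; the paper's version reuses one lemma uniformly.

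\textbf{Step 2, $D^\gamma_t$.} The paper bypasses your ``countable sup of finite-rank $\gamma$-norms'' entirely. It equips
\[
L^{0,\gamma}\ceqq\{\psi\in L^0(I_t;\gamma(U,Y)):\psi\text{ represents some }R_\psi\in\gamma(L^2(I_t;U),Y)\}
\]
with the metric $d(\psi,\varphi)=d_0(\psi,\varphi)+\|R_\psi-R_\varphi\|_\gamma$ (well-defined by Remark~\ref{rem: represent}), observes that $(L^{0,\gamma},d)$ is Polish and embeds continuously into $L^0(I_t;\gamma(U,Y))$, and applies Kuratowski once: $L^{0,\gamma}\in\BB(L^0)$, hence $D^\gamma_t=\hat\sigma^{-1}(L^{0,\gamma})\in\BB(\B)$. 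No approximation bookkeeping is needed.

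\textbf{Step 3.} You misread the statement: it asserts $\F/\BB(\B)$-measurability of $\bar u$ and $\BB(I_T)\otimes\F$-measurability of $b(\cdot,\bar u)$, $\sigma(\cdot,\bar u)$ --- \emph{not} the completed versions. Your ``patching on a null set'' phrasing leads you to $\overline\F$, which is too weak. The paper obtains genuine $\F$-measurability because $\hat u\col\Om\to L^0(I_T;Z)$ is $\F$-measurable (Lemma~\ref{lem:L^0}) and, by Kuratowski, $\BB(\B)\subset\BB(L^0(I_T;Z))$; hence $\bar u^{-1}(A)\in\F$ for every $A\in\BB(\B)$ without any completion. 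This then propagates to $b(\cdot,\bar u)=b\circ(\Id_{I_T},\bar u)$ and $\sigma(\cdot,\bar u)$ via \eqref{eq:b sigma mble}. In short, the $L^0$ route from Step~1 is not just a technical alternative --- it is the unifying device that delivers the sharper $\F$-measurability throughout.
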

\begin{proof}
Lemma \ref{lem:L^0} yields strong measurability of $\hat{u}\col\Om\to L^0({I}_T;Z)\col\om\mapsto u(\cdot,\om)$. Note that $\B\into C(\bar{I}_T;Z)\into L^0(I_T;Z)$, so by Kuratowski's theorem  (and Remark \ref{rem:L0}), $\B\in\BB(L^0(I_T;Z))$. Hence $\{\om\in\Om:u(\cdot,\om)\in \B\}=\hat{u}^{-1}(\B)\in \F$.

Similarly, by  \eqref{eq:b sigma mble}, $b$ and $\sigma$ are  $\BB(I_T)\otimes \BB(\B)/\BB(Z)$- and $\BB(I_T)\otimes \BB(\B)/\BB(\gamma(U,Y))$-measurable respectively, hence strongly measurable, as $Z$ and $\gamma(U,Y)$ are separable. Thus Lemma \ref{lem:L^0} yields   
\begin{equation}\label{eq:tilde sigma}
\hat{\sigma}\col \B\to L^0(I_T;\gamma(U,Y))\col v\mapsto  \sigma(\cdot,v) \text{ is strongly measurable}
\end{equation}
and similarly for $\hat{b}\col \B\to L^0(I_T;Z)\col v\mapsto b(\cdot,v)$.  
Next, note that $L^1_\loc(\bar{I}_T;Z)\into L^0(I_T;Z)$ and $L^2_\loc(\bar{I}_T;\gamma(U,Y))\into L^0(I_T;\gamma(U,Y))$, so by Kuratowski's theorem: 
\begin{equation}\label{eq:Lp kuratowski}
    L^1_\loc(\bar{I}_T;Z)\in \BB(L^0(I_T;Z)), \quad L^2_\loc(\bar{I}_T;\gamma(U,Y))\in \BB(L^0(I_T;\gamma(U,Y))).
\end{equation}
Combined with the measurability of $\hat{b},\hat{\sigma}$, this gives $B=\hat{b}^{-1}(L^1_\loc(\bar{I}_T;Z))\in\BB(\B)$ and $D = \hat{\sigma}^{-1}(L^2_\loc(\bar{I}_T;\gamma(U,Y)))\in \BB(\B)$.  
   
Now let $t\in\bar{I}_T$ be arbitrary. Define    
\begin{align*}
&L^{0,\gamma}\ceqq \{\psi\in L^0(I_t;\guz) : \psi^*y^*\in L^2(I_t;U) \text{  $\forall y\in Y^*$, } \psi \text{ represents an }R_\psi\in \gamma(L^2({I}_t;U),Z)\}, \\
&d(\psi,\varphi)\ceqq d_0(\psi,\varphi)+\|R_\psi-R_\varphi\|_{\gamma(L^2({I}_t;U),Z)},
\end{align*} 
where $d_0$ is any metric metrizing the topology of $L^0(I_t;\guz)$. By Remark \ref{rem: represent}, $d$ is well-defined. Observe that $(L^{0,\gamma},d)$ is Polish, $(L^{0,\gamma},d)\into L^0(I_t;\guz)$ and $L^1(I_t;Z)\into L^0(I_t;Z)$. Hence Kuratowski's theorem gives   
$L^{0,\gamma}\in \BB(L^0(I_t;\guz))$.  
Thus $D_t^\gamma=\hat{\sigma}^{-1}(L^{0,\gamma})\in\BB(\B)$. 
     
For the next part, recall that $u$ is $\F/\BB(L^0(I_T;Z))$-measurable by Lemma \ref{lem:L^0} and $\BB(\B)\subset \BB(L^0(I_T;Z))$. 
Therefore, $\bar{u}\ceqq u\one_{\B}(u)$ is $\F/\BB(\B)$-measurable. 
Moreover,  $\bar{u}(t,\cdot)=u(t,\cdot)$  a.s. in $Z$ and $u$ is $(\F_t)$-adapted, so $\bar{u}
$ is trivially $(\overline{\F_t})$-adapted. 
Also, $\bar{u}\in \B\into C({\bar{I}_T};Z)$ everywhere on $\Om$, so it has continuous paths in $Z$. In particular, $\bar{u}$ is $(\overline{\F_t})$-progressively measurable.  
 
For the last part, we only give the proof for $\sigma$, since the same arguments apply  to $b$. Again, let $\bar{u}=u\one_{\B}(u)$. We start with adaptedness of $\sigma(\cdot,\bar{u})$. Let $t\in I_T$. By \eqref{eq:b sigma adapted}, $\sigma(t,\cdot)\col\B\to \gamma(U,Y)$ is $\BB_t(\B)/\BB(\gamma(U,Y))$-measurable. So it suffices to show that $\bar{u}\col \Om\to \B$ is $\overline{\F_t}/\BB_t(\B)$-measurable. By the definition \eqref{eq:def B_t(B^T)}, this is equivalent to showing that $\pi^1_s\circ \bar{u}=\bar{u}(s,\cdot)$ is $\overline{\F_t}/\BB(Z)$-measurable for all $s\in\bar{I}_t$. The latter holds by the $(\overline{\F_t})$-adaptedness of $\bar{u}$, thus $\sigma(\cdot,\bar{u})$ is $(\overline{\F_t})$-adapted. 
For the joint measurability, recall that $\bar{u}$ is $\F/\BB(\B)$-measurable and note that $\Id_T\col I_T \to I_T\col t\mapsto t$ is Borel measurable.  
Hence $(\Id_T,\bar{u})$ is $\BB(I_T)\otimes\F/\BB(I_T)\otimes \BB(\B)$-measurable. Now \eqref{eq:b sigma mble} gives $\BB(I_T)\otimes\F$-measurability of $\sigma\circ(\Id_T,\bar{u})=\sigma(\cdot,\bar{u})$, as desired. 
\end{proof}

\begin{corollary}\label{cor: mble L^p subsets}
  Let Assumption \ref{ass} hold. Then for all $z^*\in Z^*$,
\begin{align}\label{eq: ana weak mble set}
\begin{split}
&B_{z^*}\ceqq\{v\in \B:z^*\circ b(\cdot, v)\in L^1_\loc(\bar{I}_T)\}\in \BB(\B),\\  
&D_{z^*}\ceqq\{v\in \B:z^*\circ \sigma(\cdot, v)\in L^2_\loc(\bar{I}_T;U^*)\}\in \BB(\B).
\end{split}
\end{align}
Let $(S(t,s))_{s,t\in \bar{I}_T, s\leq t}$ be a $Z$-strongly measurable evolution family on $Z$ in the sense of Definition \ref{def: evol fam}. 
Then for all $t\in\bar{I}_T$ and $z^*\in Z^*$,
\begin{align} 
\begin{split}\label{eq: mble sets mild}
&B_t^S\ceqq\{v\in \B:S(t,\cdot)b(\cdot, v)\in L^1({I}_t;Z)\}\in \BB(\B),\\ 
&D_t^S\ceqq\{v\in \B:S(t,\cdot)\sigma(\cdot, v)\in L^2({I}_t;\guz)\}\in \BB(\B), \\
&D_{t}^{\gamma,S}\ceqq\{v\in \B:S(t,\cdot)\sigma(\cdot, v)\in \gamma(L^2({I}_t;U),Z)\}\in \BB(\B),
\end{split}\\
\begin{split}\label{eq: mble sets weak mild}
&B_{z^*,t}^S\ceqq\{v\in \B:z^*\circ S(t,\cdot)b(\cdot, v)\in L^1({I}_t)\}\in \BB(\B),\\ 
&D_{z^*,t}^S\ceqq\{v\in \B:z^*\circ S(t,\cdot)\sigma(\cdot, v)\in L^2({I}_t;U^*)\}\in \BB(\B). 
\end{split}
\end{align}
Moreover, if $u$ is as in Lemma \ref{lem: mble L^p subsets}, then for all $t\in\bar{I}_T$,   $S(t,\cdot)b(\cdot,u\one_{\B}(u))\col {I}_t\times\Om\to Z$ and $S(t,\cdot)\sigma(\cdot,u\one_{\B}(u))\col {I}_t\times\Om\to \guz$ are $\BB({I}_t)\times\F$-measurable and $(\overline{\F_s})$-adapted.   
\end{corollary}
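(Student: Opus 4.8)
The plan is to mirror the structure of Lemma \ref{lem: mble L^p subsets} as closely as possible, since Corollary \ref{cor: mble L^p subsets} is essentially the same statement with $b,\sigma$ replaced by the ``evolution-family-weighted'' coefficients $S(t,\cdot)b(\cdot,\cdot)$ and $S(t,\cdot)\sigma(\cdot,\cdot)$, and with the extra wrinkle that one also tests against functionals $z^*\in Z^*$. So first I would record the basic measurability facts for the modified coefficients. Fix $t\in\bar I_T$ and consider $b^S_t(s,v)\ceqq S(t,s)b(s,v)$ for $(s,v)\in I_t\times\B$. Since $S(\cdot,\cdot)x$ is Borel measurable on $\Lambda$ for each $x\in Z$ by \eqref{eq: mble evol fam}, the map $(s,x)\mapsto S(t,s)x$ is jointly Borel measurable $I_t\times Z\to Z$ (use that $Z$ is separable, so this is a Carathéodory-type map: Borel in $s$, continuous—linear—in $x$; invoke the standard fact, as in \cite[Lem. 4.51]{aliprantisborder06}, or approximate $x$ by simple functions). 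Composing with the jointly measurable $b\col I_t\times\B\to Z$ from \eqref{eq:b sigma mble} and with the diagonal-type map $(s,v)\mapsto (s,b(s,v))$, we get that $b^S_t\col I_t\times\B\to Z$ is $\BB(I_t)\otimes\BB(\B)/\BB(Z)$-measurable; the same argument gives $\sigma^S_t\col I_t\times\B\to \guz$ measurable (here one uses that $S(t,s)\in\LL(Y,Z)$ composes with $\gamma(U,Y)$ into $\gamma(U,Z)$ via the ideal property of $\gamma$-radonifying operators, \cite[Th. 9.1.10]{HNVWvolume2}, continuously in the $\gamma(U,Y)$-argument). Likewise $b^S_t(s,\cdot)$ and $\sigma^S_t(s,\cdot)$ are $\BB_s(\B)$-measurable for each $s$, because they factor through $b(s,\cdot),\sigma(s,\cdot)$ which are $\BB_s(\B)$-measurable by \eqref{eq:b sigma adapted} post-composed with the fixed bounded operator $S(t,s)$.

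Next I would deduce the Borel measurability of all the subsets of $\B$ listed in \eqref{eq: ana weak mble set}, \eqref{eq: mble sets mild} and \eqref{eq: mble sets weak mild} by exactly the argument used in Lemma \ref{lem: mble L^p subsets}: apply Lemma \ref{lem:L^0} to obtain that $v\mapsto b^S_t(\cdot,v)$ is strongly measurable $\B\to L^0(I_t;Z)$ and $v\mapsto \sigma^S_t(\cdot,v)$ is strongly measurable $\B\to L^0(I_t;\guz)$; then use the continuous embeddings $L^1(I_t;Z)\into L^0(I_t;Z)$, $L^2(I_t;\guz)\into L^0(I_t;\guz)$ together with Kuratowski's theorem (and Remark \ref{rem:L0}) to see that $L^1(I_t;Z)\in\BB(L^0(I_t;Z))$, $L^2(I_t;\guz)\in\BB(L^0(I_t;\guz))$, so that $B^S_t$, $D^S_t$ are preimages of Borel sets under strongly measurable maps. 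For $D^{\gamma,S}_t$ I would copy verbatim the $L^{0,\gamma}$-construction from the proof of Lemma \ref{lem: mble L^p subsets}: equip the set of $\psi\in L^0(I_t;\guz)$ that belong scalarly to $L^2(I_t;U)$ and represent some $R_\psi\in\gamma(L^2(I_t;U),Z)$ with the metric $d(\psi,\varphi)=d_0(\psi,\varphi)+\|R_\psi-R_\varphi\|_{\gamma(L^2(I_t;U),Z)}$, note it is Polish and embeds continuously into $L^0(I_t;\guz)$, apply Kuratowski, and pull back. For the $z^*$-indexed sets $B_{z^*}$, $D_{z^*}$, $B^S_{z^*,t}$, $D^S_{z^*,t}$ the same scheme applies: $z^*\circ b$ is $\R$-valued jointly measurable (resp. $z^*\circ\sigma$ is $U^*$-valued, using the $U$-strong measurability of $\sigma$ and \cite[Lem. 2.5]{NVW07} as already invoked after Assumption \ref{ass}), $v\mapsto z^*\circ b^S_t(\cdot,v)$ is strongly measurable into $L^0(I_t)$, and $L^1_\loc(I_T)\into L^0(I_T)$, $L^2_\loc(I_T;U^*)\into L^0(I_T;U^*)$ are Borel by Kuratowski.

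Finally, for the joint-measurability and adaptedness claim about $S(t,\cdot)b(\cdot,u\one_\B(u))$ and $S(t,\cdot)\sigma(\cdot,u\one_\B(u))$ on $\bar I_t\times\Om$, I would simply combine what Lemma \ref{lem: mble L^p subsets} already gives—namely that $\bar u\ceqq u\one_\B(u)\col\Om\to\B$ is $\F/\BB(\B)$-measurable, $(\overline{\F_s})$-adapted, and that $b(\cdot,\bar u),\sigma(\cdot,\bar u)$ are $\BB(I_T)\otimes\F$-measurable and $(\overline{\F_s})$-adapted—with the first paragraph. Joint measurability: $(s,\om)\mapsto (s,\bar u(\om))$ is $\BB(\bar I_t)\otimes\F/\BB(\bar I_t)\otimes\BB(\B)$-measurable, and composing with the jointly measurable $b^S_t$ (resp. $\sigma^S_t$) gives $\BB(\bar I_t)\otimes\F$-measurability. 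Adaptedness at time $s\le t$: $\bar u$ is $\overline{\F_s}/\BB_s(\B)$-measurable (as shown in Lemma \ref{lem: mble L^p subsets}), and $b^S_t(s,\cdot)$ is $\BB_s(\B)/\BB(Z)$-measurable by the first paragraph, so the composition $b^S_t(s,\bar u)=S(t,s)b(s,\bar u)$ is $\overline{\F_s}/\BB(Z)$-measurable; same for $\sigma^S_t$.

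\textbf{Main obstacle.} The only genuinely new point beyond a mechanical transfer of Lemma \ref{lem: mble L^p subsets} is establishing the \emph{joint} (in $(s,v)$, resp. $(s,\om)$) measurability of $S(t,s)b(s,v)$ and $S(t,s)\sigma(s,v)$, since $S$ is only assumed \emph{strongly} measurable in Definition \ref{def: evol fam} (pointwise-in-$x$ Borel, no continuity in $s$). The resolution—that a map which is Borel in $s$ and bounded-linear in $x$ is jointly Borel when $Z$ is separable—is standard, but it should be stated carefully, and one must also check that it survives post-composition with the already-jointly-measurable $b$ (via the diagonal map $(s,v)\mapsto(s,b(s,v))$) and, for $\sigma$, that left-composition with $S(t,s)\in\LL(Y,Z)$ maps $\gamma(U,Y)$ continuously into $\gamma(U,Z)$ so that the $\guz$-valued map stays strongly measurable. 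Everything downstream is then a verbatim repetition of the Kuratowski-plus-$L^0$ argument already carried out in Lemma \ref{lem: mble L^p subsets}.
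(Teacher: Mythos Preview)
Your proposal is correct and follows essentially the same route as the paper's proof. The only organisational difference is that the paper is slightly more economical: rather than re-running the Kuratowski-plus-$L^0$ argument for each modified coefficient, it observes that the new coefficients $b_{z^*}\ceqq z^*\circ b$, $\sigma_{z^*}\ceqq z^*|_Y\circ\sigma$, $b^t\ceqq S(t,\cdot)b$, $\sigma^t\ceqq S(t,\cdot)|_Y\circ\sigma$, and their $z^*$-composed versions, each satisfy the hypotheses \eqref{eq:b sigma mble} and \eqref{eq:b sigma adapted} of Lemma~\ref{lem: mble L^p subsets} (with suitable choices of $T$, $Y$, $Z$), and then invokes that lemma as a black box. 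Your ``main obstacle''---joint measurability of $(s,v)\mapsto S(t,s)b(s,v)$ from only strong measurability of $S$---is exactly what the paper isolates too, and it handles it via its own Lemma~\ref{lem: mble composition}\ref{it: mble comp 3} (the in-house Carath\'eodory lemma) rather than citing an external source; the $\gamma$-ideal-property step for $\sigma^t$ is identical in both. The final adaptedness/joint-measurability claim for $S(t,\cdot)b(\cdot,\bar u)$, $S(t,\cdot)\sigma(\cdot,\bar u)$ is in the paper simply the last statement of Lemma~\ref{lem: mble L^p subsets} applied with the new coefficients, which is what your last paragraph does explicitly.
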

\begin{proof}
Note that $z^*|_Y\in Y^*$ and $S(t,s)|_Y\in\LL(Y,Z)$ since $Y\into Z$. 
Recall that by Assumption \ref{ass}, $b$ and $\sigma$ satisfy \eqref{eq:b sigma mble} and \eqref{eq:b sigma adapted}. 

For \eqref{eq: ana weak mble set}, apply  Lemma \ref{lem: mble L^p subsets} with new coefficients $b_{z^*}\ceqq z^*\circ b$, $\sigma_{z^*}\ceqq z^*|_Y\circ \sigma$, which satisfy \eqref{eq:b sigma mble} and \eqref{eq:b sigma adapted}  with $Y=Z=\R$. 

For \eqref{eq: mble sets mild} and the very last statement, we apply Lemma \ref{lem: mble L^p subsets} with $T=t$,  $Y=Z$, 
${b}^t(\cdot,v)=S(t,\cdot)b(\cdot,v)$, ${\sigma}^t(\cdot,v)=S(t,\cdot)|_Y\circ\sigma(\cdot,v)$. Let us prove that these coefficients satisfy \eqref{eq:b sigma mble} and \eqref{eq:b sigma adapted}. For \eqref{eq:b sigma mble}, we apply Lemma \ref{lem: mble composition}\ref{it: mble comp 3}   with  $(\tilde S,\tilde\A)=({I}_t\times \B,\BB({I}_t)\otimes\BB(\B))$,   
$\Phi^1\col \tilde S\times Z\to Z$, $\Phi^1(s,v,z)\ceqq S(t,s)z$, $\phi^1\ceqq b\col \tilde S\to Z$ and $\Phi^2\col \tilde S\times \guy\to \guz$,  $\Phi^2(s,v,L)=S(t,s)|_Y\circ L$, $\phi^2\ceqq\sigma\col \tilde S\to \guy$ respectively. Note that $b^t=\Phi^1(\cdot,\phi^1(\cdot))$ and $\sigma^t=\Phi^2(\cdot,\phi^2(\cdot))$.
By \eqref{eq:b sigma mble}, $\phi^1$ and $\phi^2$ are measurable. Moreover, $\Phi^1(s,v,\cdot)$ is continuous since $S(t,s)\in\LL(Z)$ and $\Phi^1(\cdot,z)$ is measurable by \eqref{eq: mble evol fam}. 
Since $S(t,s)|_Y\in \LL(Y,Z)$, the ideal property  \cite[Th.\ 9.1.10]{HNVWvolume2} gives that $\Phi^2$ is well-defined and $\Phi^2(s,v,\cdot)$ is continuous. Lastly,  for fixed $x\in U$, $\Phi^2(\cdot,L)x\col \tilde S\to Z$ is measurable  by \eqref{eq: mble evol fam}, hence $\Phi^2(\cdot,L)\col \tilde S\to \guz$ is measurable \cite[Lem.\ 2.5]{NVW07}. Thus indeed, $b^t$ and $\sigma^t$ satisfy \eqref{eq:b sigma mble} by Lemma \ref{lem: mble composition}. 
For \eqref{eq:b sigma adapted}, fix $0\leq s\leq t$. Then $S(t,s)\col Z\to Z$ and $S(t,s)|_Y\circ \col \gamma(U,Y)\to \gamma(U,Z)$ are fixed, continuous maps, hence Borel measurable. Thus $b^t(s,\cdot)=S(t,s)b(s,\cdot)$ and $\sigma^t(s,\cdot)=S(t,s)|_Y\circ\sigma(s,\cdot)$ are $\F_s$-measurable as $b$ and $\sigma$ satisfy \eqref{eq:b sigma adapted}, concluding the proof.  

Lastly, for \eqref{eq: mble sets weak mild}, apply Lemma \ref{lem: mble L^p subsets} with 
${b}_{z^*}^t=z^*\circ b^t$, ${\sigma}_{z^*}^t=z^*\circ \sigma^t$:  $Y=Z=\R$, $T=t$. Since ${b}^t$ and ${\sigma}^t$ satisfy \eqref{eq:b sigma adapted} and \eqref{eq: mble sets mild}, the same holds for ${b}_{z^*}^t$ and ${\sigma}_{z^*}^t$. 
\end{proof}

\subsection{Representation of stochastic equations}\label{sub:equation}
 
We now turn to the conditions amongst \ref{it:sol integrability}-\ref{it:integrated eq mild weak} concerning the actual equations \eqref{SPDE} and \eqref{SPDE mild}. To apply Theorem \ref{th:YW}, we need that satisfaction of these equations is a property of $\law(u,u(0),W)$. To this end, we will prove an even stronger result, namely a functional representation of the stochastic integral in  M-type 2  or UMD Banach spaces, given by Theorem \ref{th:stoch int rep type 2 or UMD}. First, we make some preparations.

For topological spaces $S$, we will henceforth equip the set $\PP(S)$ with the following $\sigma$-algebra:  
\begin{align}\label{eq: def Sigma_S}
\Sigma_S\ceqq \sigma({\Pi}_S^B:B\in\BB(S)),\qquad {\Pi}_S^B\col \PP(S)\to[0,1]\col\mu\mapsto\mu(B).  
\end{align}
\begin{lemma}\label{lem: Sigma_S}
  If $S$ and $S'$ are topological spaces and $\phi\col S\to S'$ is Borel measurable, then $\PP(S)\to \PP(S')\col \mu\mapsto \phi\#\mu$ is $\Sigma_S/\Sigma_{S'}$-measurable. 
\end{lemma}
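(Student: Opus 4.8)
The statement is that pushforward along a Borel map $\phi\col S\to S'$ induces a $\Sigma_S/\Sigma_{S'}$-measurable map on probability measures. Since $\Sigma_{S'}$ is \emph{generated} by the evaluation maps $\Pi_{S'}^{B'}$ for $B'\in\BB(S')$, it suffices to check that the composition
\[
\PP(S)\xrightarrow{\ \mu\mapsto\phi\#\mu\ }\PP(S')\xrightarrow{\ \Pi_{S'}^{B'}\ }[0,1]
\]
is $\Sigma_S/\BB([0,1])$-measurable for each fixed $B'\in\BB(S')$. This is the standard ``measurability into a space with a generated $\sigma$-algebra is tested on generators'' reduction.

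\textbf{Key computation.} For a fixed $B'\in\BB(S')$ and any $\mu\in\PP(S)$ we have, by the definition of the pushforward,
\[
\Pi_{S'}^{B'}(\phi\#\mu)=(\phi\#\mu)(B')=\mu\big(\phi^{-1}(B')\big)=\Pi_S^{\phi^{-1}(B')}(\mu).
\]
Here $\phi^{-1}(B')\in\BB(S)$ precisely because $\phi$ is Borel measurable, so $\Pi_S^{\phi^{-1}(B')}$ is one of the generating maps of $\Sigma_S$ and is therefore $\Sigma_S/\BB([0,1])$-measurable by definition of $\Sigma_S$ in \eqref{eq: def Sigma_S}. Thus $\Pi_{S'}^{B'}\circ(\mu\mapsto\phi\#\mu)=\Pi_S^{\phi^{-1}(B')}$ is $\Sigma_S$-measurable for every $B'$, and the reduction above yields that $\mu\mapsto\phi\#\mu$ is $\Sigma_S/\Sigma_{S'}$-measurable.

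\textbf{Main obstacle.} There is essentially no obstacle: the only points requiring care are (i) noting that $\phi\#\mu$ is indeed a well-defined Borel probability measure on $S'$ (which is exactly where Borel measurability of $\phi$ is used, both for well-definedness and for $\phi^{-1}(B')\in\BB(S)$), and (ii) invoking the elementary fact that to verify measurability of a map valued in a measurable space whose $\sigma$-algebra is generated by a family of maps, it is enough to verify it against that family. Both are routine, so the proof is short.

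\begin{proof}
Let $\phi\col S\to S'$ be Borel measurable. For $\mu\in\PP(S)$, the pushforward $\phi\#\mu$ is a Borel probability measure on $S'$, so $\mu\mapsto\phi\#\mu$ is a well-defined map $\PP(S)\to\PP(S')$. Since $\Sigma_{S'}=\sigma(\Pi_{S'}^{B'}:B'\in\BB(S'))$, it suffices to show that $\Pi_{S'}^{B'}\circ(\mu\mapsto\phi\#\mu)$ is $\Sigma_S/\BB([0,1])$-measurable for every $B'\in\BB(S')$. Fix such a $B'$. Because $\phi$ is Borel measurable, $\phi^{-1}(B')\in\BB(S)$, and for all $\mu\in\PP(S)$,
\[
\Pi_{S'}^{B'}(\phi\#\mu)=(\phi\#\mu)(B')=\mu\big(\phi^{-1}(B')\big)=\Pi_S^{\phi^{-1}(B')}(\mu).
\]
Hence $\Pi_{S'}^{B'}\circ(\mu\mapsto\phi\#\mu)=\Pi_S^{\phi^{-1}(B')}$, which is $\Sigma_S/\BB([0,1])$-measurable by the definition \eqref{eq: def Sigma_S} of $\Sigma_S$. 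This holds for every $B'\in\BB(S')$, so $\mu\mapsto\phi\#\mu$ is $\Sigma_S/\Sigma_{S'}$-measurable.
\end{proof}
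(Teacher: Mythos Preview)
Your proof is correct and follows essentially the same approach as the paper: both reduce to checking against the generators $\Pi_{S'}^{B'}$ and use the identity $(\phi\#\mu)(B')=\mu(\phi^{-1}(B'))$ with $\phi^{-1}(B')\in\BB(S)$. The paper phrases it via preimages of sets $(\Pi_{S'}^{B'})^{-1}(A)$ rather than compositions of maps, but this is a cosmetic difference.
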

\begin{proof}
For all $B'\in\BB(S')$ and $A\in\BB[0,1]$, we have
    $B\ceqq\phi^{-1}(B')\in \BB(S)$, so $\{\mu:\phi\#\mu\in ({\Pi}^{B'}_{S'})^{-1}(A)\} 
= ({\Pi}_S^{B})^{-1}(A)\in {\Sigma}_S$. 
\end{proof} 

The next lemma contains the intended functional representation theorem for the special case $Y=\R$. After that, the desired representation for $Y$ of M-type 2 or UMD will be derived from it. 
Recall that  $\W=C(\bar{I}_T;\R^\infty)$ and $C(\bar{I}_T;Y)$ are equipped with the topology of uniform convergence on compact subsets of ${\bar{I}_T}$. See Remark \ref{rem: mathbbW choice} for the result with $C(\bar{I}_T;U_1)$ instead of $\W$. 

The proof extends arguments from \cite[Lem.\ 18.23, Prop.\ 18.26]{kallenberg21} to infinite dimensions. The main ingredient is a functional representation for limits in probability given in  \cite[Prop.\ 5.32]{kallenberg21}.

\begin{lemma}\label{lem:stoch int rep weak} 
Let $U$ be a separable Hilbert space, let $(e_k)$ be an orthonormal basis for $U$ and let $T\in(0,\infty]$. 
There exists a  measurable map 
$${I}\col L^2_\loc(\bar{I}_T;U)\times \W\times \PP(L^2_\loc(\bar{I}_T;U)\times \W)\to C({\bar{I}_T})$$ 
such that for any filtered probability space $(\Om,\F,\P,(\F_t))$, for any  $U$-cylindrical Brownian motion $W$ on $(\Om,\F,\P,(\F_t))$, and for any  $\BB(I_T)\otimes\F/\BB(U)$-measurable, $(\F_t)$-adapted process $f\col I_T\times \Om \to U$ satisfying $f\in L^2_\loc(\bar{I}_T;U)$ a.s., we have
\begin{equation}\label{eq:tildeI weak}
{I}\big(f(\om),W(\om),\law(\bar{f},W)\big)=\big(\int_0^\cdot \<f, \dd W\?_U \big)(\om)\; \text{ in }C({\bar{I}_T}),\; \text{ for $\P$-\aev } \om\in\Om,
\end{equation}
where   $\bar{f}\ceqq f\one_{L^2_\loc(\bar{I}_T;U)}(f)$ and $W=(W^k)$  through \eqref{eq: def Rinfty BM}.  
Here, all spaces are equipped with their Borel $\sigma$-algebra, except for  $\PP(L^2_\loc(\bar{I}_T;U)\times \W)$, which is equipped with the $\sigma$-algebra \eqref{eq: def Sigma_S}. 
\end{lemma}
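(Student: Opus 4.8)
The plan is to follow the finite-dimensional scheme of \cite[Lem.~18.23, Prop.~18.26]{kallenberg21}, with two new ingredients: a \emph{universal}, probability-space-independent time-discretization of an adapted $L^2_\loc$-process --- universal so that its stochastic integral against $W$ is given by an \emph{explicit pathwise formula} --- and the measurable representation of limits in probability in \cite[Prop.~5.32]{kallenberg21}, which is what produces the dependence of $I$ on the law rather than on a fixed $\law(f,W)$.

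\textbf{Step 1: a universal discretization.} I would fix, for $n\in\N$, the orthogonal projection $P_n$ of $U$ onto $\mathrm{span}(e_1,\dots,e_n)$, the dyadic times $t^n_j\ceqq j2^{-n}$, and $N_n\ceqq \lceil 2^n(n\wedge T)\rceil<\infty$; then for $g\in L^2_\loc(\bar I_T;U)$ set (with $g\equiv 0$ on $(-\infty,0)$)
\[
c^n_j(g)\ceqq 2^n\!\!\int_{t^n_{j-2}}^{t^n_{j-1}}\!\! P_n g(s)\dd s\in U\quad(1\le j\le N_n),\qquad
g_n\ceqq \sum_{j=1}^{N_n} c^n_j(g)\,\one_{(t^n_{j-1},t^n_j]}.
\]
Each $c^n_j\col L^2_\loc(\bar I_T;U)\to U$ is bounded linear, hence $g\mapsto g_n$ is continuous from $L^2_\loc(\bar I_T;U)$ into itself, and by Lebesgue differentiation, $L^2$-continuity of translations, and $P_ng\to g$, one checks $g_n\to g$ in $L^2_\loc(\bar I_T;U)$ for every $g$. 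The shift by one dyadic interval is what makes this useful probabilistically: if $f\col I_T\times\Om\to U$ is $\BB(I_T)\otimes\F/\BB(U)$-measurable and $(\F_t)$-adapted, then $c^n_j(f(\cdot,\om))$ is $\overline{\F_{t^n_{j-1}}}$-measurable (Fubini and adaptedness of $f$), so $f_n$ is an elementary $(\overline{\F_t})$-adapted process, and therefore, by \eqref{eq: def stoch int elementary}, linearity, and a routine approximation of the coefficients by simple ones, $\int_0^\cdot\<f_n,\dd W\?_U$ is $\P$-\as equal to $\sum_{j}\sum_{k\le n}\<c^n_j(f),e_k\?_U\,(W^k(t^n_j\wedge\cdot)-W^k(t^n_{j-1}\wedge\cdot))$.

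\textbf{Step 2: the pathwise map and the measurable limit.} This leads me to define, for $(g,w)\in L^2_\loc(\bar I_T;U)\times\W$,
\[
J_n(g,w)\ceqq \sum_{j=1}^{N_n}\sum_{k=1}^{n}\<c^n_j(g),e_k\?_U\,\big(w_k(t^n_j\wedge\,\cdot\,)-w_k(t^n_{j-1}\wedge\,\cdot\,)\big)\ \in\ C(\bar I_T),
\]
which --- because only finitely many coordinates $w_k$ enter --- is a genuine continuous (hence Borel) map $J_n\col L^2_\loc(\bar I_T;U)\times\W\to C(\bar I_T)$ defined on \emph{all} pairs $(g,w)$, and which satisfies $J_n(\bar f(\om),W(\om))=(\int_0^\cdot\<f_n,\dd W\?_U)(\om)$ for $\P$-\aev $\om$. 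By \ref{lem:L^0} and Kuratowski, $\bar f\col\Om\to L^2_\loc(\bar I_T;U)$ is a random variable, so $\mu\ceqq\law(\bar f,W)\in\PP(S)$ is well defined, where $S\ceqq L^2_\loc(\bar I_T;U)\times\W$ is Polish (a separable Fréchet --- or Banach --- space times a Polish space). From $f_n\to f$ in $L^2_\loc(\bar I_T;U)$ $\P$-\as, a stopping-time localization (stopping when $\int_0^\cdot|f|_U^2$ reaches level $m$) together with the scalar It\^o isometry / Burkholder inequality gives $\int_0^\cdot\<f_n,\dd W\?_U\to \int_0^\cdot\<f,\dd W\?_U$ in probability in $C(\bar I_T)$; pushing this forward by $(\bar f,W)$, whose law is $\mu$, shows $(J_n)$ is Cauchy, hence convergent, in $\mu$-probability in $C(\bar I_T)$. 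Since $S$ is Polish, $\Sigma_S=\BB(\PP(S))$ for the weak topology, and \cite[Prop.~5.32]{kallenberg21} then supplies a jointly measurable $I\col S\times\PP(S)\to C(\bar I_T)$ such that whenever $(J_n(\cdot))$ converges in $\rho$-probability its limit equals $I(\cdot,\rho)$ $\rho$-\as Taking $\rho=\mu$ and pulling back along $(\bar f,W)$ gives $J_n(\bar f,W)\to I((\bar f,W),\mu)$ in $\P$-probability, while $J_n(\bar f,W)=\int_0^\cdot\<f_n,\dd W\?_U\to\int_0^\cdot\<f,\dd W\?_U$ in $\P$-probability; uniqueness of limits in probability and $\bar f=f$ \as then yield \eqref{eq:tildeI weak}.

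\textbf{Main obstacle.} The delicate point is engineering the discretization in Step~1 so that it is \emph{simultaneously} a Borel function of $g\in L^2_\loc(\bar I_T;U)$ alone, an elementary $(\overline{\F_t})$-adapted process when evaluated at an adapted $f$ (forcing the one-interval time-shift), convergent to the identity in $L^2_\loc$, and such that the resulting $J_n$ is defined and Borel on \emph{all} of $S$ --- not merely on the pairs $(g,w)$ actually arising from a Brownian motion (forcing the truncation to $\mathrm{span}(e_1,\dots,e_n)$ and to $[0,n\wedge T]$). This last property is exactly what lets \cite[Prop.~5.32]{kallenberg21} deliver the law-dependent representation $I$. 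The remaining verifications --- continuity/measurability of $J_n$, the $L^2_\loc$-convergence of the discretization, the localized convergence of the stochastic integrals, and $\Sigma_S=\BB(\PP(S))$ --- are routine.
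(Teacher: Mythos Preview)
Your argument is correct and it is a genuine streamlining of the paper's proof. The paper proceeds in three nested stages, each invoking \cite[Prop.~5.32]{kallenberg21}: first it treats \emph{continuous} $U$-valued integrands, discretizing in time by evaluating $f$ at the grid points $t_j^m$ (which is why continuity is needed), passing to the limit in $m$; then it removes the projection $P_n$ by letting $n\to\infty$; finally it extends to $L^2_\loc$-integrands via the Steklov mollifiers $J_n(h)(t)=n\int_{(t-1/n)\vee 0}^{t}h(s)\dd s$, which map $L^2_\loc$ into $C(\bar I_T;U)$. Your single sequence $(g_n)$ accomplishes all three tasks at once: the one-interval backward shift replaces point evaluation by interval averaging, so adaptedness of the resulting step process is available directly on $L^2_\loc$-integrands without ever passing through $C(\bar I_T;U)$. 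This collapses three applications of \cite[Prop.~5.32]{kallenberg21} into one and avoids the intermediate map $\tilde I$ on $C(\bar I_T;U)\times\W\times\PP(C(\bar I_T;U)\times\W)$ altogether. What the paper's decomposition buys, by contrast, is that each individual step is slightly more elementary to verify (e.g.\ $f^m\to f$ for continuous $f$ is immediate), whereas your $g_n\to g$ needs the contraction bound $\|M_n\|_{L^2\to L^2}\le 1$ plus density.

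One small point deserves a remark in your write-up: the claim that $c^n_j(f(\cdot,\om))$ is $\overline{\F_{t^n_{j-1}}}$-measurable does not follow from ``Fubini and adaptedness'' alone, because joint $\BB(I_T)\otimes\F$-measurability together with $(\F_t)$-adaptedness does not automatically give $\BB([t^n_{j-2},t^n_{j-1}])\otimes\F_{t^n_{j-1}}$-measurability of the restriction. You need the existence of a progressively measurable modification \cite[Th.~0.1]{ondrejatseidler13} (invoked in the paper as well), after which Fubini applies cleanly; the completion bar you wrote is then exactly what absorbs the ``modification'' null set. This is routine, but should be said.
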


\begin{proof}
Observe that $\bar{f}$ is $\F$-measurable, since $f\in L^0(\Om;L^0(I_T;U))$ by Lemma \ref{lem:L^0}, and $L^2_\loc(\bar{I}_T;U)\in\BB(L^0(I_T;U))$ by Kuratowski's theorem (and Remark \ref{rem:L0}).    Also,  $W$ is $\overline{\F}$-measurable, so $\law(\bar{f},W)$ is well-defined as $\overline{\P}\#(\bar{f},W)$. 
Moreover, $\bar{f}$ is $\BB(I_T)\otimes\overline{\F}/\BB(U)$-measurable and $(\overline{\F_t})$-adapted and $\P$-a.s.:  $(f,W)=(\bar{f},W)$ and $\int_{0}^{\cdot}{f}\dd W=\int_{0}^{\cdot}\bar{f}\dd W$. Therefore, with no loss of generality, we assume that $f(\om)\in L^2_\loc(\bar{I}_T;U)$ for every $\om\in \Om$. 

We will be using the functional representation for limits in probability in \cite[Prop.\ 5.32]{kallenberg21} three times, for approximation of:
\begin{itemize}
\item $U$-cylindrical Brownian motion by finite dimensional Brownian motions, 
\item $C({\bar{I}_T};U)$-valued process by step processes with an equidistant time grid, 
\item $L^2_\loc(\bar{I}_T;U)$-valued process by $C({\bar{I}_T};U)$-valued processes.
\end{itemize}
We emphasize that each of these approximations should be independent of the underlying probability space, since the same should hold for the eventual map $I$. 

First we construct a measurable map 
\[
\tilde{I}\col C({\bar{I}_T};U)\times \W\times \PP(C({\bar{I}_T};U)\times \W)\to C({\bar{I}_T})
\] 
such that for any filtered probability space $(\Om,\F,\P,(\F_t))$, for any  $U$-cylindrical Brownian motion $W$ on $(\Om,\F,\P,(\F_t))$, and for any $\BB(I_T)\otimes\F/\BB(U)$-measurable, $(\F_t)$-adapted process $f\col {\bar{I}_T}\times \Om \to U$ satisfying $f\in C({\bar{I}_T};U)$, we have
\begin{equation}\label{eq: tildeI}
  \tilde{I}(f(\om),W(\om),\law(f,W)) = \big(\int_0^\cdot \<f, \dd W\?_U \big)(\om)\; \text{ in }C({\bar{I}_T}),\; \text{ for $\P$-\aev } \om\in\Om.
\end{equation}
We equip  $\PP(C({\bar{I}_T};U)\times\W)$ with $\sigma$-algebra \eqref{eq: def Sigma_S} and we 
equip all other spaces with the Borel $\sigma$-algebra. 
For $w\in\W$, we will write $w_k\in C(\bar{I}_T)$ for the $k$-th  component of $w$. 
For $n,m\in\N$ and $j\in\N_0$, put $t_j^m\ceqq \frac{j}{m}\wedge T$ and  
\begin{align*}
&\tilde{I}_{n,m}\col C({\bar{I}_T};U)\times \W\times \PP(C({\bar{I}_T};U)\times \W)\to C({\bar{I}_T}),\\ 
&\tilde{I}_{n,m}(h,w)(t)\ceqq \sum_{j\in\N_0} \sum_{k=1}^n \big(w_k(t\wedge t_{j+1}^m)-w_k(t\wedge t_j^m)\big)\<h(t_j^m),e_k\?_U.
\end{align*}  
Each map $\tilde{I}_{n,m}$ is Borel measurable. Indeed, $\BB(C({\bar{I}_T}))=\sigma({\pi}_{\R}^ t:t\in {\bar{I}_T})$ (see \eqref{eq: point evaluation}) due to Lemma \ref{lem: generating maps}, so it suffices to show that $\tilde{I}_{n,m}(\cdot,\cdot)(t)= {\pi}_{\R}^t\circ \tilde{I}_{n,m}$ is Borel measurable for all $n,m\in\N$ and $t\in {\bar{I}_T}$. 
Now, for any fixed $h\in C({\bar{I}_T};U)$, $w\in \W$ and $t\in\bar{I}_T$, the maps  $\tilde{I}_{n,m}(h,\cdot)(t)\col \W\to  \R$ and $\tilde{I}_{n,m}(\cdot,w)(t)\col C({\bar{I}_T};U)\to  Y$ are continuous. 
Thus Lemma \ref{lem: mble composition}(i)$\Rightarrow$(ii) yields Borel measurability of $\tilde{I}_{n,m}(\cdot,\cdot)(t)$ as required. 

Now, $\tilde{I}_{n,m}$ represents the stochastic integral of approximating step processes: for any filtered probability space $(\Om,\F,\P,(\F_t))$ with $U$-cylindrical Brownian motion $W$ and for any $\BB(I_T)\otimes\F/\BB(U)$-measurable, $(\F_t)$-adapted process $f\col {\bar{I}_T}\times \Om \to U$ satisfying $f\in C({\bar{I}_T};U)$, we have
\begin{equation}\label{eq: step integral}
\tilde{I}_{n,m}(f(\om),W(\om))=\big(\int_0^\cdot \<{P}_nf^m,\dd W\?_U\big)(\om) \text{ for $\P$-\aev } \om\in\Om,
\end{equation}
where $P_n\in \mathcal{L}(U)$ be the orthogonal projection onto $\mathrm{span}(\{e_1,\ldots,e_n\})$ and 
\[
f^m(t,\om)\ceqq \sum_{j\in\N_0}\one_{(t_j^m,t_{j+1}^m]}(t)f(t_j^m,\om). 
\]
An elementary computation shows that \eqref{eq: step integral} holds for $f=\one_{(t_1,t_2]}\otimes \zeta$  with $0\leq t_1<t_2<\infty$ and $\zeta\col\Om\to U$ $\F_{t_1}$-measurable (\eqref{eq: def stoch int elementary} extends due to the It\^o isomorphism).  
Hence, for any $f$ as mentioned above \eqref{eq: step integral}, by linearity in $f$ of both sides of \eqref{eq: step integral}, we obtain as claimed:  
\[
\tilde{I}_{n,m}(f,W)(t)=\tilde{I}_{n,m}(P_nf,W)(t)=\int_0^t \<{P}_n\circ P_nf^m,\dd W\?_U=\int_0^t \<{P}_nf^m,\dd W\?_U\text{ \as}
\]  
Next, for each $n\in\N$ and $f$, $W$ as above, we have
\begin{equation}\label{eq: Pnfm conv} 
\lim_{m\to\infty}\int_0^\cdot \<{P}_nf^m,\dd W\?_U=\int_0^\cdot\<{P}_nf,\dd W\?_U \text{ in probability in }C(\bar{I}_T). 
\end{equation} 
Indeed, continuity of $f$ and of ${P}_n$ give $\lim_{m\to\infty}{P}_nf^m(t,\om)={P}_nf(t,\om)$ in $U$  for every $t\in \bar{I}_T$ and $\om\in\Om$. 
The Dominated Convergence Theorem yields $\lim_{m\to\infty}{P}_nf^m(\cdot,\om)={P}_nf(\cdot,\om)$ in $L^2_\loc(\bar{I}_T;U)$ for every $\om$, in particular in probability. Now the It\^o isomorphism yields \eqref{eq: Pnfm conv}. 

Moreover, since $f\in C({\bar{I}_T};U)$ everywhere on $\Om$,   \eqref{eq: step integral} and \eqref{eq: Pnfm conv} yield 
\[
\lim_{m\to\infty}\tilde{I}_{n,m}(f,W)=\int_0^\cdot\<{P}_nf,\dd W\?_U \text{ in probability in }C(\bar{I}_T). 
\] 
On the other hand, for each $n\in\N$, by \cite[Prop.\ 5.32]{kallenberg21}, there exists a measurable map $\tilde{I}_n\col C({\bar{I}_T};U)\times \W\times \PP(C({\bar{I}_T};U)\times \W)\to C({\bar{I}_T}) $ such that for any $C({\bar{I}_T};U)\times \W$-valued random variable $(\xi,\eta)$ on any probability space, it holds that 
\[
\tilde{I}_{n,m}(\xi,\eta) \text{ converges in probability  in }C(\bar{I}_T;Y)\iff \lim_{m\to\infty}\tilde{I}_{n,m}(\xi,\eta)=\tilde{I}_{n}(\xi,\eta,\law(\xi,\eta))\, \text{ a.s.}
\]
In particular, for each $n\in\N$ and $f$, $W$ as above with $f\in C({\bar{I}_T};U)$ everywhere,
\begin{equation}\label{eq: rep In}
\int_0^\cdot\<{P}_nf,\dd W\?_U =\tilde{I}_{n}(f,W,\law(f,W))\, \text{ a.s.}
\end{equation} 
Next, note that ${P}_nf\to f$ in $U$ on $\bar{I}_T\times\Om$, so by the Dominated Convergence Theorem, ${P}_nf\to f$ in $L^2_\loc(\bar{I}_T;U)$ on $\Om$, and by the It\^o isomorphism  \cite[Th.\ 5.5, (5.4)]{NVW07}:
\[
\lim_{n\to\infty}\int_0^\cdot\<{P}_nf,\dd W\?_U=\int_0^\cdot \<f,\dd W\?_U \text{ in probability  in }C(\bar{I}_T),
\]
Hence, invoking the functional representation \cite[Prop.\ 5.32]{kallenberg21} again, there exists a measurable map $\tilde{I}\col C({\bar{I}_T};U)\times \W\times \PP(C({\bar{I}_T};U)\times \W)\to C({\bar{I}_T})$ such that \eqref{eq: tildeI} is satisfied for all $f$, $W$ as above.
  
It remains to extend the result to integrand processes $f$ lying in $L^2_{\loc}(\bar{I}_T;U)$  
instead of $C(\bar{I}_T;U)$.  
To this end, define
\begin{align*}
&J_n\col L^2_\loc(\bar{I}_T;U)\to C({\bar{I}_T};U),\, J_n(h)(t)\ceqq 
n\!\int_{(t-\frac{1}{n})\vee 0}^t h(s)\dd s,\\ 
&{I}_n\col L^2_\loc(\bar{I}_T;U)\times \W\times \PP(L^2_\loc(\bar{I}_T;U)\times \W)\to C({\bar{I}_T})\col(g,w,\mu)\mapsto \tilde{I}(J_n\circ g,w,(J_n,\Id_\W)\#\mu).
\end{align*} 
Note that for all $t\in \bar{I}_T$: $\|J_n(h)-J_n(g)\|_{C(\bar{I}_t;U)}\leq{n}^{1/2}\|h-g\|_{L^2(I_t;U)}$ by H\"older's inequality, thus each $J_n$ is continuous, hence Borel measurable.  
Moreover,  $\mu\mapsto(J_n,\mathrm{Id}_{\W})\#\mu$ is  
measurable by Lemma \ref{lem: Sigma_S}. 
Consequently, each $I_n$ is measurable.

Furthermore, the maps $J_n$ satisfy
\begin{equation}\label{eq:J_n pointwise}
\lim_{n\to\infty}J_nh= h \text{ in }  L^2_\loc(\bar{I}_T;U), \quad\qquad h\in L^2_\loc(\bar{I}_T;U).
\end{equation}
This follows from the $L^p$-convergence of approximate identities in $L^p(\R^d;E)$ with $E$ Banach. 
Putting $\varphi\ceqq \one_{[0,1]}\in L^1(\R)$ and $\varphi_n(t)\ceqq n\varphi(nt)$, we obtain from \cite[Prop.\ 1.2.32]{HNVWvolume1}:
\[
\lim_{n\to\infty}\varphi_n\ast \tilde{h}=\tilde{h} \,\text{ in }L^2(\R;U),\quad \text{ for any $\tilde{h}\in L^2(\R;U)$}, 
\]
where $\ast$ denotes the convolution. Now, let $t\in \bar{I}_T$ and $h\in L^2_\loc(\bar{I}_T;U)$ be  arbitrary. Define $\tilde{h}\ceqq \one_{I_t}h\in L^2(\R;U)$. For all $r\in I_t$ we have: 
\begin{align*}
\varphi_n\ast \tilde{h}(r)=n\int_\R \one_{[0,1/n]}(r-s)\tilde{h}(s)\dd s
&=n\int_{r-1/n}^r \tilde{h}(s)\dd s
= n\int_{(r-1/n)\vee 0}^r  {h}(s)\dd s=J_n(h)(r).
\end{align*}
Therefore,
\begin{align*}
  \|J_n(h)-h\|_{L^2(I_t;U)} &= \|(\varphi_n\ast \tilde{h}-\tilde{h})\one_{I_t}\|_{L^2(\R;U)} \leq \|\varphi_n\ast \tilde{h}-\tilde{h}\|_{L^2(\R;U)}\to 0, 
\end{align*}
proving \eqref{eq:J_n pointwise}. 

Suppose that $f\col \bar{I}_T\times \Om \to U$ is  $\BB(\bar{I}_T)\otimes\F/\BB(U)$-measurable and $(\F_t)$-adapted process  satisfying $f\in L^2_\loc(\bar{I}_T;U)$ on $\Om$. 
Then the convergence in \eqref{eq:J_n pointwise} holds pointwise on $\Om$, hence in probability, 
and the It\^o homeomorphism \cite[Th.\ 5.5]{NVW07} and \eqref{eq: tildeI} yield
\[
{I}_n(f,W,\law(f,W))\overset{\text{a.s.}}{=}\int_0^\cdot \<J_n\circ f,\dd W\?_U\to \int_0^\cdot \<f,\dd W\?_U \text{ in probability in }C({\bar{I}_T})  
\]
as $n\to \infty$. 
On the other hand, \cite[Prop.\ 5.32]{kallenberg21} gives a measurable map $I\col L^2_\loc(\bar{I}_T;U)\times \W\times \PP(L^2_\loc(\bar{I}_T;U)\times \W)\to C({\bar{I}_T})$ such that for any measurable $(f,W)\col \Om\to L^2_\loc(\bar{I}_T;U)\times \W$:  
\begin{align*}
(I_n(f,W,\law(f,W)))_n &\text{ converges in probability }\\
&\iff I_n(f,W,\law(f,W)){\to} I(f,W,\law(f,W)) \text{  in probability}. 
\end{align*}  
For every  $\BB(\bar{I}_T)\otimes\F/\BB(U)$-measurable, $(\F_t)$-adapted process $f\col \bar{I}_T\times \Om \to U$ satisfying $f\in L^2_\loc(\bar{I}_T;U)$ on $\Om$, we have $\law(f,W)=\law(\bar{f},W)$, and we conclude that   \eqref{eq:tildeI weak} holds.  
\end{proof} 

It might be that Lemma \ref{lem:stoch int rep weak} can also be proved with `$\law(f,W)$' replaced by `$\law(f)$'. However, this is irrelevant for the Yamada--Watanabe theory of this paper. 

Using the above representation for scalar valued stochastic integrals, we are also able to represent the stochastic integral in M-type 2 spaces and in UMD spaces. The proof relies on having a sequence of functionals in the dual space that separates the points. Such functionals give all necessary information for the desired measurability.  
For UMD spaces, recall the conditions for stochastic integrability directly above and below Definition \ref{def: UMD process2}. 

\begin{theorem}\label{th:stoch int rep type 2 or UMD} 
Let $U$ be a separable real Hilbert space, let $(e_k)$ be an orthonormal basis for $U$ and let $Y$  
be a separable real Banach space.  

If $Y$ has M-type 2 and $T\in(0,\infty]$, then there exists a measurable map 
$${I}\col L^2_\loc(\bar{I}_T;\gamma(U, Y))\times \W\times \PP(L^2_\loc(\bar{I}_T;\gamma(U, Y))\times \W)\to C({\bar{I}_T}; Y)$$ 
such that for any filtered probability space $(\Om,\F,\P,(\F_t))$, for any  $U$-cylindrical Brownian motion $W$ on $(\Om,\F,\P,(\F_t))$, and for any measurable, $(\F_t)$-adapted $f\col \bar{I}_T\times \Om \to \gamma(U,Y)$ such that $f\in L^2_\loc(\bar{I}_T;\gamma(U, Y))$ a.s., we have 
\begin{equation}\label{eq:tildeI}
{I}\big(f(\om),W(\om),\law(\bar{f},W)\big)=\Big(\int_0^\cdot f \dd W \Big)(\om)\; \text{ in }C({\bar{I}_T}; Y),\; \text{ for $\P$-\aev } \om\in\Om,
\end{equation} 
where $\bar{f}\ceqq f\one_{L^2_\loc(\bar{I}_T;\gamma(U, Y))}(f)$.

If $Y$ is UMD and $T\in(0,\infty)$, then there exists a measurable map 
$${I}\col \gluy\times \W\times \PP\big(\gluy\times \W\big)\to C({\bar{I}_T}; Y)$$ 
such that for any filtered probability space $(\Om,\F,\P,(\F_t))$, for any  $U$-cylindrical Brownian motion $W$ on $(\Om,\F,\P,(\F_t))$, and for any stochastically integrable process $f\col \bar{I}_T\times \Om \to \LL(U,Y)$ representing   $\xi_f\col\Om\to\gluy$, we have 
\begin{equation}\label{eq:tildeI no type 2}
{I}\big(\xi_f(\om),W(\om),\law(\xi_f,W)\big)=\Big(\int_0^\cdot f \dd W \Big)(\om)\; \text{ in }C({\bar{I}_T}; Y),\; \text{ for $\P$-\aev } \om\in\Om. 
\end{equation}

In both cases, we identify $W=(W^k)$  through \eqref{eq: def Rinfty BM} using $(e_k)$.  
Moreover, all spaces are equipped with their Borel $\sigma$-algebra, except for $\PP(L^2_\loc(\bar{I}_T;\gamma(U, Y))\times \W)$ and $\PP\big(\gluy\times \W\big)$, which are equipped with their corresponding $\sigma$-algebra \eqref{eq: def Sigma_S}. 
\end{theorem}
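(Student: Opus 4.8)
The plan is to deduce the $Y$-valued representation from the scalar one in Lemma~\ref{lem:stoch int rep weak}, by testing the $Y$-valued stochastic integral against a fixed countable family $(y_j^*)_{j\in\N}\subset Y^*$ separating the points of $Y$ (such a family exists since $Y$ is separable), and then reassembling the $Y$-valued path from its scalar projections via the Lusin--Souslin theorem. The M-type 2 and UMD cases will differ only in how the original integrand is turned into a $U$-valued one; the reassembly step is identical.

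\textit{M-type 2 case.} For each $j$ the ideal property \cite[Th.~9.1.10]{HNVWvolume2} and the isometry $\gamma(U,\R)\cong U$ make $R\mapsto y_j^*\circ R$ a bounded linear operator $\Phi_j\col\guy\to U$, so the map $g\mapsto\Phi_j\circ g$ is continuous, hence Borel, from $L^2_{\loc}(\bar I_T;\guy)$ to $L^2_{\loc}(\bar I_T;U)$, while $\mu\mapsto(\Phi_j,\Id_\W)\#\mu$ is measurable for the $\sigma$-algebra \eqref{eq: def Sigma_S} by Lemma~\ref{lem: Sigma_S}. Writing $I_0$ for the scalar map of Lemma~\ref{lem:stoch int rep weak}, I would set $I^{(j)}(g,w,\mu)\ceqq I_0\big(\Phi_j\circ g,w,(\Phi_j,\Id_\W)\#\mu\big)$, which is measurable. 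For $f$ as in the statement, $\Phi_j\circ f$ is a measurable, $(\F_t)$-adapted, a.s.\ $L^2_{\loc}(\bar I_T;U)$-valued process, and since the full-measure event $\{f\in L^2_{\loc}(\bar I_T;\guy)\}$ identifies $\Phi_j\circ f$ with $\Phi_j\circ\bar f$, the truncated-integrand law required by Lemma~\ref{lem:stoch int rep weak} is $(\Phi_j,\Id_\W)\#\law(\bar f,W)$. Using that the bounded functional $y_j^*$ commutes with the stochastic integral (immediate for elementary integrands, then by density), Lemma~\ref{lem:stoch int rep weak} gives, for $\P$-a.e.\ $\om$,
\[
I^{(j)}\big(f(\om),W(\om),\law(\bar f,W)\big)=\Big\langle\Big(\int_0^\cdot f\dd W\Big)(\om),\,y_j^*\Big\rangle .
\]

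\textit{Reassembly.} The map $\Psi\col C(\bar I_T;Y)\to C(\bar I_T;\R^\infty)=\W$, $\Psi(g)\ceqq(\langle g(\cdot),y_j^*\rangle)_{j\in\N}$, is continuous for the topologies of uniform convergence on compacts and injective since the $y_j^*$ separate points; both $C(\bar I_T;Y)$ and $\W$ are Polish (Subsection~\ref{sub:stoch integration}), so by the Lusin--Souslin theorem (the result \cite[Th.~15.1]{kechris95} used repeatedly above) $\Psi(C(\bar I_T;Y))\in\BB(\W)$ and $\Psi^{-1}$ is Borel on it; I would extend $\Psi^{-1}$ to a Borel map $\Xi\col\W\to C(\bar I_T;Y)$ off the image arbitrarily. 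Then $I(g,w,\mu)\ceqq\Xi\big((I^{(j)}(g,w,\mu))_{j\in\N}\big)$ is measurable, because each $I^{(j)}$ is measurable into $C(\bar I_T)$ and $\BB(\W)$ is generated by the coordinate point evaluations $w\mapsto w_k(t)$. For $f,W$ as in the statement, intersecting the $\P$-null exceptional sets over $j\in\N$ shows $(I^{(j)}(f(\om),W(\om),\law(\bar f,W)))_{j\in\N}=\Psi\big((\int_0^\cdot f\dd W)(\om)\big)$ for $\P$-a.e.\ $\om$; in particular this sequence lies in $\Psi(C(\bar I_T;Y))$, so $\Xi$ returns $(\int_0^\cdot f\dd W)(\om)$, which is \eqref{eq:tildeI}.

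\textit{UMD case and main obstacle.} Here $T<\infty$, hence $L^2_{\loc}(\bar I_T;U)=L^2(I_T;U)$, and the only change is to replace $\Phi_j$ by the bounded operator $\gluy\to L^2(I_T;U)$, $R\mapsto R^*y_j^*$ (bounded because the $\gamma$-norm dominates the operator norm). Unwinding the representing identity \eqref{eq: def represent} gives $\Phi_j(\xi_f(\om))=f(\cdot,\om)^*y_j^*$ in $L^2(I_T;U)$ for $\P$-a.e.\ $\om$; this is a measurable, $(\F_t)$-adapted process by the $U$-strong measurability and adaptedness of $f$, and $\int_0^\cdot\langle f(\cdot)^*y_j^*,\dd W\rangle_U=\langle\int_0^\cdot f\dd W,y_j^*\rangle$. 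Defining $I^{(j)}$ from $I_0$ and then $I\ceqq\Xi\circ(I^{(j)})_{j\in\N}$ exactly as above yields \eqref{eq:tildeI no type 2}. I expect the main obstacle to be this reassembly step — in particular, verifying that all spaces involved are Polish so that Lusin--Souslin applies to $\Psi$, and, in the UMD case, correctly matching the representing operator $\xi_f$ with the scalar integrands $f(\cdot)^*y_j^*$; everything else is bookkeeping on top of Lemma~\ref{lem:stoch int rep weak}.
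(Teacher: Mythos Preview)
Your proposal is correct and follows essentially the same approach as the paper: both pick a separating sequence $(y_j^*)\subset Y^*$, apply Lemma~\ref{lem:stoch int rep weak} componentwise via the maps $R\mapsto y_j^*\circ R$ (respectively $R\mapsto R^*y_j^*$ in the UMD case), and reassemble through the Borel inverse of the injection $C(\bar I_T;Y)\hookrightarrow C(\bar I_T;\R^\infty)$ obtained from Lusin--Souslin/Kuratowski. The only cosmetic difference is that the paper extends the inverse by $0$ rather than ``arbitrarily''; otherwise your argument and the paper's coincide.
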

\begin{proof}
  We write $I_{\R}$ for the map obtained in Lemma \ref{lem:stoch int rep weak} (corresponding to the case $Y=\R$). 
  For any separable Banach space $Y$, we can pick a sequence $(y_k^*)\subset Y^*$ which separates the points in $Y$ \cite[Prop.\ B.1.11]{HNVWvolume1}. Then 
   \begin{equation}\label{eq: iota}
   \iota\col C(\bar{I}_T;Y)\to C(\bar{I}_T;\R^\infty)\col f\mapsto (y_k^*\circ f)_k
   \end{equation}
   is a continuous injection between Polish spaces, hence $\iota(C(\bar{I}_T;Y))\in\BB(C(\bar{I}_T;\R^\infty))$ and $\iota$ is a Borel isomorphism between $C(\bar{I}_T;Y)$ and $\iota(C(\bar{I}_T;Y))$ \cite[Cor.\ 15.2]{kechris95}. 
   
   For the M-type 2 case, we let $R\col U^*\cong U$ be the Riesz isomorphism and we define  
   
   \noindent
   $I^k\col L^2_\loc(\bar{I}_T;\gamma(U, Y))\times \W\times \PP(L^2_\loc(\bar{I}_T;\gamma(U, Y))\times \W)\to C({\bar{I}_T})$ by  
   $$
     I^k(h,w,\mu)\ceqq I_\R(R\circ (y_k^*\circ h(\cdot)),w,(R\circ y_k^*,\Id_{\W})\#\mu).
   $$
   Then $I^k$ is measurable (recall Lemma \ref{lem: Sigma_S}) and for $(f,W)$ as in the statement, we have 
   \begin{equation}\label{eq: I^k}
   I^k(f,W,\law(\bar{f},W))=\int_0^\cdot \<R\circ y_k^*\circ f,\dd W\?_U= \int_0^\cdot y_k^*\circ f\dd W=y_k^*\circ\int_0^\cdot f\dd W 
   \end{equation}  
   \as in $C(\bar{I}_T)$. As in the proof of Lemma \ref{lem:stoch int rep weak},  $\law(\bar{f},W)$ is well-defined. 
   Note that $(I^k)_k$ is then a measurable $C(\bar{I}_T;\R^\infty)$-valued map, by Lemma \ref{lem: generating maps} applied with the projections onto the components $C(\bar{I}_T;\R^\infty)\to C(\bar{I}_T)$. Consequently,  $I\col L^2_\loc(\bar{I}_T;\gamma(U, Y))\times \W\times \PP(L^2_\loc(\bar{I}_T;\gamma(U, Y))\times \W)\to C({\bar{I}_T};Y)$ given by   
   $$
   I(h,w,\mu)\ceqq \begin{cases}
   \iota^{-1}((I^k(h,w,\mu))_k), \quad & (I^k(h,w,\mu))_k\in\mathrm{ran}(\iota),\\
   0, &\text{otherwise},\end{cases}
   $$ 
   is measurable. Taking a countable intersection over $k\in\N$ of subsets of $\Om$ with measure 1 on which \eqref{eq: I^k} holds, and using \eqref{eq: iota}, we obtain \eqref{eq:tildeI}. 
   
  Now let $Y$ be UMD. Note that $\gamma(L^2(I_T;U),\R)=L^2(I_T;U)^*\cong L^2(I_T;U)$ and let $\tilde{R}$ denote the latter Riesz isomorphism. 
    Define $I^k\col \gluy\times \W\times \PP\big(\gluy\times \W\big)\to C({\bar{I}_T})$ by $I^k(L,w,\tilde{\mu})\ceqq I_\R(\tilde{R}\circ (y_k^*\circ L),w,(\tilde{R}\circ y_k^*,\Id_{\W})\#\tilde{\mu})$. 
    Note that $(\tilde{R}\circ y_k^*,\Id_{\W})\#\tilde{\mu} \in \mathcal{P}(L^2(I_T;U) \times \W)$, matching the last component of the domain of $I_{\R}$. 
    
    Since $f$ represents $\xi_f$ (see \eqref{eq: def represent}), we have $\tilde{R}\circ (y_k^*\circ \xi_f)=R\circ (y_k^*\circ f(\cdot))$, thus  $I^k(\xi_f,W,\law(\xi_f,W))=\int_0^\cdot \<\tilde{R}\circ (y_k^*\circ \xi_f),\dd W\?_U=y_k^*\circ\int_0^\cdot f\dd W$ \as in $C(\bar{I}_T)$. As before,  $I(L,w,\tilde{\mu})\ceqq \iota^{-1}((I^k(L,w,\tilde{\mu}))_k)$ if $(I^k(L,w,\tilde{\mu}))_k\in\mathrm{ran}(\iota)$  and $I(L,w,\tilde{\mu})\ceqq0$ otherwise, defines the desired map.  
\end{proof}

\begin{remark}\label{rem: mathbbW choice}
Some authors view $W$ as $C(\bar{I}_T;U_1)$-valued for a Hilbert space $U_1$, rather than $\W=C(\bar{I}_T;\R^\infty)$-valued as we do here. 
However, by virtue of Lemma \ref{lem: sigma algs W and W_1 new}, this makes no difference for Lemma \ref{lem:stoch int rep weak} and Theorem \ref{th:stoch int rep type 2 or UMD}. Indeed, one can replace $W=(W^k)$ by the $Q_1$-Wiener process $W_1$ defined by \eqref{eq: def W_1} and replace $\W$ by $\W_1\ceqq C(\bar{I}_T;U_1)$. By Lemma \ref{lem: sigma algs W and W_1 new}, we have $W|_{\bar{I}_T}=\phi_1(W_1|_{\bar{I}_T})$ $\P$-a.s. for a measurable map $\phi_1\col \W_1\to \W$ that is independent of $W$ and  $W_1$. 
For  $I$ as in Theorem \ref{th:stoch int rep type 2 or UMD} and $\Id$ the identity map on $L^2_\loc(\bar{I}_T;\gamma(U, Y))$ or $\gamma(L^2(I_T;U),Y)$,  
$I^1\ceqq I\circ (\Id(\cdot),\phi_1(\cdot),(\Id,\phi_1)\#(\cdot))$ is measurable and satisfies ${I}^1\big(f,W_1,\law(\bar{f},W_1)\big)= \int_0^\cdot f \dd W $ \as in $C({\bar{I}_T}; Y)$. 
\end{remark}

As a simple consequence, we obtain measurable representations of the integrals in the equations  of conditions \ref{it:integrated eq}, \ref{it:integrated eq ana weak}, \ref{it:integrated eq mild} and \ref{it:integrated eq mild weak}, as we will apply the next corollary to different pairs $(b,\sigma)$.

\begin{corollary}\label{cor: diffusion drift mble rep} 
Let $\B$ be a Polish space, let $U$ be a separable Hilbert space, let $Y$ and $Z$ be real separable Banach spaces with $Y\into Z$ and let  $b$ and $\sigma$ satisfy \eqref{eq:b sigma mble} and \eqref{eq:b sigma adapted}. 
Let $Y$ be of M-type 2 and let $T\in(0,\infty]$. 

There exists a measurable map $I^{b,\sigma}\col \B\times \W \times \PP(\B\times \W) \to C({\bar{I}_T}; Z)$ such that for any filtered probability space $(\Om,\F,\P,(\F_t))$, for any  $U$-cylindrical Brownian motion $W$ on $(\Om,\F,\P,(\F_t))$ and any process $u\col \bar{I}_T\times \Om\to Z$ that is  $\BB(\bar{I}_T)\otimes\F/\BB(Z)$-measurable, $(\F_t)$-adapted and satisfies condition \ref{it:sol integrability}, we have  for $\P$-\aev  $\om\in\Om$: 
\begin{equation}\label{eq:functional I^sigma}
{I}^{b,\sigma}(u(\om),W(\om),\law(\bar{u},W)) =\int_0^\cdot b(s,u(\om))\dd s+\Big(\int_0^\cdot \sigma(s,u)\dd W(s)\Big)(\om)\,\text{ in $C({\bar{I}_T}; Z)$},
\end{equation}
where $\bar{u}=u\one_\B(u)$,  $W=(W^k)$ through \eqref{eq: def Rinfty BM} and $\PP(\B\times \W)$ is equipped with the $\sigma$-algebra \eqref{eq: def Sigma_S}. 

If $Y$ is a real separable UMD Banach space and $T\in(0,\infty)$, then the same statement holds with  condition  \ref{it:sol integrability} replaced by \ref{it:sol integrability UMD}. 
\end{corollary}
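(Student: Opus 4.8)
The plan is to write $I^{b,\sigma}$ as the sum of a ``drift part'' depending only on the first argument and a ``diffusion part'' obtained by feeding a suitably truncated coefficient map into the representation map $I$ of Theorem \ref{th:stoch int rep type 2 or UMD}. Throughout I rely on facts from (the proof of) Lemma \ref{lem: mble L^p subsets}: the maps $\hat b\col\B\to L^0(I_T;Z)$, $v\mapsto b(\cdot,v)$ and $\hat\sigma\col\B\to L^0(I_T;\guy)$, $v\mapsto\sigma(\cdot,v)$ are strongly measurable; the sets $B,D,D_T^\gamma$ lie in $\BB(\B)$; and, by Kuratowski's theorem, the Borel $\sigma$-algebras of $L^1_\loc(\bar I_T;Z)$, of $L^2_\loc(\bar I_T;\guy)$ and of the Polish space $L^{0,\gamma}$ used there coincide with the traces of the corresponding $L^0$-Borel $\sigma$-algebras.

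First I would treat the drift. Since $g\mapsto\int_0^\cdot g(s)\dd s$ is continuous from $L^1_\loc(\bar I_T;Z)$ into $C(\bar I_T;Z)$ (indeed $\|\int_0^\cdot g(s)\dd s\|_{C(\bar I_t;Z)}\le\|g\|_{L^1(I_t;Z)}$ for all $t\in\bar I_T$), the map $J^b\col\B\to C(\bar I_T;Z)$ given by $J^b(v)\ceqq\int_0^\cdot b(s,v)\dd s$ for $v\in B$ and $J^b(v)\ceqq0$ otherwise is Borel measurable. Next, in the M-type 2 case I would set $\tilde\sigma\col\B\to L^2_\loc(\bar I_T;\guy)$ equal to $\hat\sigma$ on $D$ and to $0$ off $D$; this is Borel measurable and, crucially, $\tilde\sigma(v)=\sigma(\cdot,v)\one_{L^2_\loc(\bar I_T;\guy)}(\sigma(\cdot,v))$ for every $v\in\B$. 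Writing $I$ for the map of Theorem \ref{th:stoch int rep type 2 or UMD}, using Lemma \ref{lem: Sigma_S} for the measurability of $\mu\mapsto(\tilde\sigma,\Id_\W)\#\mu$ and the Borel measurability of the continuous inclusion $C(\bar I_T;Y)\into C(\bar I_T;Z)$, the map
\[
I^{b,\sigma}(v,w,\mu)\ceqq J^b(v)+I\big(\tilde\sigma(v),w,(\tilde\sigma,\Id_\W)\#\mu\big),\qquad (v,w,\mu)\in\B\times\W\times\PP(\B\times\W),
\]
is measurable (addition in $C(\bar I_T;Z)$ being continuous).

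To verify \eqref{eq:functional I^sigma}, take $u$, $W$ as in the statement. Condition \ref{it:sol integrability} gives $u\in\B$, $u\in B$ and $u\in D$ $\P$-a.s., so $u=\bar u$ $\P$-a.s.\ and $\bar u$ is everywhere in $\B$, $\overline\F/\BB(\B)$-measurable and $(\overline{\F_t})$-progressively measurable by Lemma \ref{lem: mble L^p subsets}; hence $J^b(u(\om))=\int_0^\cdot b(s,u(\om))\dd s$ $\P$-a.s. Setting $f\ceqq\sigma(\cdot,\bar u)$, which is jointly measurable, $(\overline{\F_t})$-adapted (Lemma \ref{lem: mble L^p subsets}) and lies in $L^2_\loc(\bar I_T;\guy)$ $\P$-a.s., Theorem \ref{th:stoch int rep type 2 or UMD} applies to $f$; the key observation is that its truncation $\bar f=f\one_{L^2_\loc(\bar I_T;\guy)}(f)$ equals $\tilde\sigma(\bar u)$ pointwise on $\Om$, so that $\law(\bar f,W)=(\tilde\sigma,\Id_\W)\#\law(\bar u,W)$. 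Using $u=\bar u$ $\P$-a.s.\ and the indistinguishability of $\int_0^\cdot\sigma(s,u)\dd W$ and $\int_0^\cdot f\dd W$, we get $I(\tilde\sigma(u(\om)),W(\om),(\tilde\sigma,\Id_\W)\#\law(\bar u,W))=(\int_0^\cdot\sigma(s,u)\dd W)(\om)$ $\P$-a.s., and adding $J^b(u(\om))$ yields \eqref{eq:functional I^sigma}.

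For the UMD case ($T\in(0,\infty)$) I would proceed identically, replacing $\tilde\sigma$ by $\hat\sigma^\gamma\col\B\to\gluy$ equal to $R_{\sigma(\cdot,v)}$ on $D_T^\gamma$ and to $0$ elsewhere --- Borel measurable because $\psi\mapsto R_\psi$ is continuous on $L^{0,\gamma}$ for the metric used in the proof of Lemma \ref{lem: mble L^p subsets} --- and invoking the UMD part of Theorem \ref{th:stoch int rep type 2 or UMD}: for $u$ satisfying \ref{it:sol integrability UMD}, $f=\sigma(\cdot,\bar u)$ is stochastically integrable (by the UMD characterisation recalled before Remark \ref{rem: represent}) and, by Remark \ref{rem: represent}, represents $\xi_f=\hat\sigma^\gamma(\bar u)$ $\P$-a.s., so $\law(\xi_f,W)=(\hat\sigma^\gamma,\Id_\W)\#\law(\bar u,W)$ and the argument concludes as before. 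I expect the main obstacle to be purely one of bookkeeping: making sure each constructed map depends solely on its three declared arguments (not on the underlying filtered probability space), and verifying the pointwise identities $\bar f=\tilde\sigma(\bar u)$ and $\xi_f=\hat\sigma^\gamma(\bar u)$ that allow the law argument $\law(\bar u,W)$ to be transported, via a fixed pushforward, to the law argument required by Theorem \ref{th:stoch int rep type 2 or UMD}.
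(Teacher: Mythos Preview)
Your proposal is correct and follows essentially the same route as the paper's proof: split $I^{b,\sigma}$ into a drift part (integration $L^1_\loc\to C(\bar I_T;Z)$ composed with the truncated $\hat b$) and a diffusion part obtained by pushing $(\bar u,W)$ forward through a truncated coefficient map $\tilde\sigma$ (the paper calls it $\Phi^\sigma$) into the domain of the representation map $I$ from Theorem \ref{th:stoch int rep type 2 or UMD}, with Lemma \ref{lem: Sigma_S} supplying measurability of the law argument. The only cosmetic difference is that the paper multiplies the diffusion part by $\one_{A^\sigma}(v)$ from outside, whereas you feed $\tilde\sigma(v)$ (which is already $0$ off $D$) directly into $I$; your version is arguably cleaner since it avoids evaluating $I$ at an undefined first argument, and neither choice affects the conclusion.
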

\begin{proof} Recall that $\bar{u}$ is a $\F$-measurable by Lemma \ref{lem: mble L^p subsets} and $W$ is $\overline{\F}$-measurable, so we may indeed write $\law(\bar{u},W)$ for $\overline{\P}\#(\bar{u},W)$. 

Suppose that $Y$ is of M-type 2 and $T\in(0,\infty]$. Let $I$ be the corresponding map from Theorem \ref{th:stoch int rep type 2 or UMD}. Define 
\begin{align*}
&A^\sigma\ceqq\{v\in \B:\sigma(\cdot, v)\in L^2_\loc(\bar{I}_T;\gamma(U, {Y}))\},\\
&\Phi^\sigma\col\B\to 
L^2_\loc(\bar{I}_T;\gamma(U, {Y}))\col v\mapsto \one_{A^\sigma}(v)\sigma(\cdot, v),\\
&{I}^\sigma\col \B\times \W\times\PP(\B\times \W)\to C({\bar{I}_T}; Z)\col(v,w,\mu)\mapsto \one_{A^\sigma}(v){I}(\sigma(\cdot,v),w,(\Phi^\sigma,\mathrm{Id}_{\W})\#\mu).
\end{align*} 
We claim that $I^\sigma$ is measurable. Note that ${I}$ is also measurable with respect to codomain $C({\bar{I}_T};Z)$ instead of $C({\bar{I}_T};{Y})$ by Kuratowski's theorem. 
Moreover, $A^\sigma\in\BB(\B)$ by Lemma \ref{lem: mble L^p subsets}  and $\Phi^\sigma$ is measurable by \eqref{eq:tilde sigma} and \eqref{eq:Lp kuratowski}. 
Lemma \ref{lem: Sigma_S} now yields that $\PP(\B\times \W)\to\PP(L^2_\loc(\bar{I}_T;\gamma(U, {Y}))\times \W)\col \mu\mapsto(\Phi^\sigma,\mathrm{Id}_{\W})\#\mu$ is measurable. Measurability of $I^\sigma$ follows.  

Let $u\col \bar{I}_T\times \Om \to Z$ be any  $\BB(I_T)\otimes\F/\BB(Z)$-measurable, $(\F_t)$-adapted process  satisfying \ref{it:sol integrability}. Then for $f(t,\om)\ceqq \sigma(t,u(\om))$ we have  $f\in L^2_\loc(\bar{I}_T;\gamma(U, {Y}))$ a.s., by \ref{it:sol integrability}. Moreover, $f$ is measurable and $(\F_t)$-adapted by Lemma \ref{lem: mble L^p subsets}. 
Thus \eqref{eq:tildeI} yields 
\[
{I}^\sigma(u,W,\law(\bar{u},W))=\int_0^\cdot \sigma(s,u)\dd W(s) \text{ a.s.}
\]  
Next, consider $I^b\col \B\times \W \times \PP(\B\times \W) \to C({\bar{I}_T};Z)\col (v,w,\mu)\mapsto \int_0^\cdot b(s,v)\dd s$. Note that $I^b=\Psi\circ\Phi^b$, where
\begin{align*}
&\Phi^b\col\B\to L^1_\loc(I_T;Z)\col v\mapsto \one_{A^b}(v)b(\cdot, v),\quad \Psi\col L^1_\loc(I_T;Z)\to C({\bar{I}_T};Z)\col v\mapsto \int_0^\cdot v(s)\dd s,
\end{align*}
with $A^b\ceqq \{v\in \B:b(\cdot, v)\in L_\loc^1(I_T;Z)\}$. 
Similar to $\Phi^\sigma$, $\Phi^b$ is measurable by Lemma \ref{lem: mble L^p subsets}, \eqref{eq:tilde sigma} and \eqref{eq:Lp kuratowski}. Also,  $\Psi$ is continuous, so $I^b$ is measurable. Putting $I^{b,\sigma}\ceqq I^b+I^\sigma$ completes the proof. 

If $Y$ is UMD and $T\in(0,\infty)$, we can copy the proof above, using the map $I$ corresponding to the UMD case from Theorem \ref{th:stoch int rep type 2 or UMD} and replacing $A^\sigma$ and $\Phi^\sigma$ by respectively:
\begin{align*}
&A^{\sigma}\ceqq\{v\in \B:\sigma(\cdot, v)\in \gamma(L^2(I_T;U),Y)\},\, 
&\Phi^{\sigma}\col\B\to\gamma(L^2(I_T;U),Y))\col v\mapsto \one_{A^{\sigma}}(v)\sigma(\cdot, v).
\end{align*}   
\end{proof}

At last, we express the equations appearing in conditions \ref{it:integrated eq}, \ref{it:integrated eq ana weak}, \ref{it:integrated eq mild} and \ref{it:integrated eq mild weak} in terms of the joint distribution of $u$ and $W$. 
  
\begin{corollary}\label{cor:SPDE distributional unified}
Let Assumption \ref{ass} hold. 
There exist sets 
$A_{_{\scriptscriptstyle\mathrm{M2}}}, A_{\scriptscriptstyle\mathrm{UMD}}, A_{w},A_{\scriptscriptstyle\mathrm{M2}}^S,A_{\scriptscriptstyle\mathrm{UMD}}^S,A^S_{w}\subset \PP(\B\times \W)$ 
such that whenever $(\Om,\F,\P,(\F_t))$ is a filtered probability space,   $W$ is a  $U$-cylindrical Brownian motion on $(\Om,\F,\P,(\F_t))$  and $u\col \bar{I}_T\times \Om\to Z$ is  $\BB(\bar{I}_T)\otimes\F/\BB(Z)$-measurable, $(\F_t)$-adapted and $u\in\B$ $\P$-a.s., we have:
  \begin{align*}
  &\text{If \ref{it:sol integrability} holds, then: }\qquad\law(\bar{u},W)\in 
  A_{\scriptscriptstyle\mathrm{M2}} 
  \iff \ref{it:integrated eq} \,\text{ holds.}\\
  &\text{If \ref{it:sol integrability UMD} holds, then: }\qquad\law(\bar{u},W)\in 
  A_{\scriptscriptstyle\mathrm{UMD}} 
  \iff \ref{it:integrated eq} \,\text{ holds.}\\
  &\text{If \ref{it:sol integrability ana weak} holds, then: }\qquad\law(\bar{u},W)\in 
  A_{\mathrm{weak}} 
  \iff \ref{it:integrated eq ana weak} \,\text{ holds.}\\
  &\text{If \ref{it:sol integrability mild} holds, then: }\qquad\law(\bar{u},W)\in 
  A_{\scriptscriptstyle\mathrm{M2}}^S 
  \iff \ref{it:integrated eq mild}\text{ holds.}\\
  &\text{If \ref{it:sol integrability mild UMD} holds, then: }\qquad\law(\bar{u},W)\in 
   A_{\scriptscriptstyle\mathrm{UMD}}^S 
  \iff \ref{it:integrated eq mild}\text{ holds.}\\
  &\text{If \ref{it:sol integrability mild weak} holds, then: }\qquad\law(\bar{u},W)\in 
  A_{\mathrm{weak}}^S 
  \iff \ref{it:integrated eq mild weak} \,\text{ holds.}
  \end{align*} 
  Here, $\bar{u}\ceqq u\one_\B(u)$ and  $W=(W^k)$ through \eqref{eq: def Rinfty BM}.
\end{corollary}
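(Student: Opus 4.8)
The plan is to reduce all six equivalences to the functional representation of the drift-plus-stochastic-integral provided by Corollary~\ref{cor: diffusion drift mble rep}, fed with the pairs of coefficients occurring in \ref{it:integrated eq},\ref{it:integrated eq ana weak},\ref{it:integrated eq mild},\ref{it:integrated eq mild weak}. Throughout I would use that $\B$ and $\W$ are Polish, so $\BB(\B\times\W)=\BB(\B)\otimes\BB(\W)$ and every $\mu\in\PP(\B\times\W)$ lives on this $\sigma$-algebra, and that the embedding $\B\into C(\bar{I}_T;Z)$, the evaluation $v\mapsto v(0)$, composition with $z^*\in Z^*$, and the restrictions $C(\bar{I}_T;Z)\to C(\bar{I}_t;Z)$ and $\W\to C(\bar{I}_t;\R^\infty)$ are all continuous, hence Borel. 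A key observation making the construction legitimate is that the sets $A_{\scriptscriptstyle\mathrm{M2}},\dots$ are \emph{not} required to be measurable.

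\textbf{Model case: \ref{it:sol integrability} and \ref{it:integrated eq} ($Y$ of M-type $2$).} Let $I^{b,\sigma}\col\B\times\W\times\PP(\B\times\W)\to C(\bar{I}_T;Z)$ be the map of Corollary~\ref{cor: diffusion drift mble rep}, and define the jointly Borel measurable defect map $\Delta(v,w,\mu)\ceqq v(\cdot)-v(0)-I^{b,\sigma}(v,w,\mu)$, with $v(\cdot)$ and the constant path $v(0)$ taken in $C(\bar{I}_T;Z)$ via the embedding. Since $\{0\}$ is closed in $C(\bar{I}_T;Z)$, the set $N(\mu)\ceqq\{(v,w):\Delta(v,w,\mu)=0\}$ is Borel for every $\mu$; set $A_{\scriptscriptstyle\mathrm{M2}}\ceqq\{\mu\in\PP(\B\times\W):\mu(N(\mu))=1\}$ if $Y$ has M-type $2$, and $A_{\scriptscriptstyle\mathrm{M2}}\ceqq\varnothing$ otherwise (in which case \ref{it:sol integrability} never holds and the claim is vacuous). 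To verify the equivalence, take $W,u$ as in the statement with \ref{it:sol integrability} in force and put $\mu\ceqq\law(\bar u,W)$. By Corollary~\ref{cor: diffusion drift mble rep}, $I^{b,\sigma}(u(\om),W(\om),\mu)=\int_0^\cdot b(s,u(\om))\dd s+(\int_0^\cdot\sigma(s,u)\dd W(s))(\om)$ in $C(\bar{I}_T;Z)$ for a.e.\ $\om$, and since $u=\bar u$ $\P$-a.s.\ the same holds with $u(\om)$ replaced by $\bar u(\om)$; hence $\mu(N(\mu))=\P((\bar u,W)\in N(\mu))=\P(\Delta(\bar u,W,\mu)=0)$ equals $1$ iff the identity in \ref{it:integrated eq} holds $\P$-a.s., i.e.\ iff \ref{it:integrated eq} holds.

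\textbf{The other five cases.} I would carry out the identical construction with the coefficients changed. For \ref{it:sol integrability UMD}/\ref{it:integrated eq} use the UMD version of $I^{b,\sigma}$ in Corollary~\ref{cor: diffusion drift mble rep} (and $A_{\scriptscriptstyle\mathrm{UMD}}\ceqq\varnothing$ when $Y$ is not UMD). For \ref{it:sol integrability ana weak}/\ref{it:integrated eq ana weak}, apply Corollary~\ref{cor: diffusion drift mble rep} with $Y=Z=\R$ to $z^*\circ b$ and $z^*|_Y\circ\sigma$ for each $z^*\in D$ (these satisfy \eqref{eq:b sigma mble},\eqref{eq:b sigma adapted} by Corollary~\ref{cor: mble L^p subsets}), producing defect maps $\Delta_{z^*}$ and Borel zero sets $N_{z^*}(\mu)$ as above, and put $A_{\mathrm{weak}}\ceqq\{\mu:\mu(N_{z^*}(\mu))=1\text{ for all }z^*\in D\}$. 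For the three mild cases, work for each $t\in\bar{I}_T$ on the finite horizon $\bar{I}_t$: composing with the restriction maps $\B\to C(\bar{I}_t;Z)$, $\W\to C(\bar{I}_t;\R^\infty)$ and pushing measures forward via Lemma~\ref{lem: Sigma_S}, apply Corollary~\ref{cor: diffusion drift mble rep} (with $Y=Z$ for $A^S_{\scriptscriptstyle\mathrm{M2}},A^S_{\scriptscriptstyle\mathrm{UMD}}$, and with $Y=Z=\R$ for $A^S_{\mathrm{weak}}$, horizon $t$) to $S(t,\cdot)b(\cdot,\cdot)$ and $S(t,\cdot)|_Y\circ\sigma(\cdot,\cdot)$ (resp.\ their compositions with $z^*$), which satisfy \eqref{eq:b sigma mble},\eqref{eq:b sigma adapted} by the proof of Corollary~\ref{cor: mble L^p subsets}; the defect maps then read $v|_{\bar{I}_t}-S(t,0)v(0)-(\text{the integral part})$, and denoting by $N^S_t(\mu)$ their zero sets one sets the $A$-set to be $\{\mu:\mu(N^S_t(\mu))=1\text{ for all }t\in\bar{I}_T\}$, with an extra quantifier over $z^*\in D$ in the weak mild case. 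In each case the $\iff$ is verified word-for-word as in the model computation, using that the relevant integrability condition makes Corollary~\ref{cor: diffusion drift mble rep} applicable for every admissible parameter and that $\bar u$ is $\BB(\bar{I}_T)\otimes\overline{\F}/\BB(Z)$-measurable, $(\overline{\F_t})$-adapted with $W$ still a $U$-cylindrical Brownian motion on the completion (Lemmas~\ref{lem: mble L^p subsets} and \ref{lem:sol on completion}).

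\textbf{Main obstacle.} The genuine subtlety is that the zero sets $N(\mu)$ depend on $\mu$ and that the solution notions carry the (generally uncountable) quantifiers ``for all $z^*\in D$'' and ``for all $t\in\bar{I}_T$'': this forces $A_{\scriptscriptstyle\mathrm{M2}},\dots$ to be built as intersections and prevents them from being measurable subsets of $\PP(\B\times\W)$ — harmless here, but the point to handle carefully. A secondary nuisance is that the UMD representation of Corollary~\ref{cor: diffusion drift mble rep} is stated only for finite horizons, so when $T=\infty$ and a UMD integrability condition is used one first reduces to the horizons $[0,n]$, $n\in\N$, using that a path in $C([0,\infty);Z)$ vanishes iff all its restrictions to $[0,n]$ do. Everything else is routine bookkeeping with the continuity of the evaluation and restriction maps and with Lemma~\ref{lem: Sigma_S}.
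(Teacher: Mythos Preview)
Your proposal is correct and follows essentially the same approach as the paper: both construct the sets $A_\bullet$ by freezing $\mu$, forming the Borel ``defect zero'' set $N(\mu)=\{(v,w):I_0^\bullet(v,w,\mu)=I^\bullet(v,w,\mu)\}$ from Corollary~\ref{cor: diffusion drift mble rep} applied to the appropriate coefficient pair (the pairs $(b_{z^*},\sigma_{z^*})$, $(b^t,\sigma^t)$, $(b^t_{z^*},\sigma^t_{z^*})$ whose measurability was established in Corollary~\ref{cor: mble L^p subsets}), and then taking $A_\bullet=\{\mu:\mu(N(\mu))=1\text{ for all relevant }t,z^*\}$. The only cosmetic differences are that the paper intersects over all $t\in\bar I_T$ (rather than over $n\in\N$) to handle the UMD finite-horizon restriction, and keeps $\B\times\W$ as the domain throughout rather than pushing forward to restricted path spaces.
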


\begin{proof} 
Let $(b_{z^*},\sigma_{z^*})$, $({b}^t,{\sigma}^t)$ and $({b}_{z^*}^t,{\sigma}_{z^*}^t)$ be as in the proof of Corollary \ref{cor: mble L^p subsets}. There, it was shown that these coefficients satisfy the conditions of Lemma \ref{lem: mble L^p subsets}, with respectively ($Y=Z=\R$), ($T=t$, $Y=Z$) and  ($T=t$, $Y=Z=\R$). 
These conditions coincide with those of Corollary \ref{cor: diffusion drift mble rep}, thus the latter yields measurable maps $I^{b,\sigma}_{\scriptscriptstyle \mathrm{M2}}$, $I^{b,\sigma}_{\scriptscriptstyle \mathrm{UMD},t}$, $I^{b_{z^*},\sigma_{z^*}}$, $I^{b^t,\sigma^t}_{\scriptscriptstyle \mathrm{M2},t}$, $I^{b^t,\sigma^t}_{\scriptscriptstyle \mathrm{UMD},t}$ and $I^{b_{z^*}^t,\sigma_{z^*}^t}_t$ for $z^*\in Z^*$ and  $t\in \bar{I}_T$.  The subscripts M2 and UMD indicate whether $Y$ is assumed to be M-type 2 or UMD. If absent, Corollary \ref{cor: diffusion drift mble rep} is applied with $Y=\R$ and we use the corresponding M-type 2 map (of course the UMD map is in this case the same up to Riesz isomorphisms). The subscripts $t$ indicate that $T\in(0,\infty]$ is replaced by $t(<\infty)$ in the application of Corollary \ref{cor: diffusion drift mble rep}.  
In addition, consider maps  from $\B\times\W\times\PP(\B\times\W)$ to respectively $C(\bar{I}_T;Z)$, $C(\bar{I}_T)$, $Z$ and $\R$,  given by $I_0(v,w,\mu)\ceqq v(\cdot) -v(0)$, $I_0^{z^*}\ceqq \<I_0(\cdot),z^*\?$,  $I_{0,t}(v,w,\mu)\ceqq v(t) -S(t,0)v(0)$ and $I_{0,t}^{z^*}\ceqq \<I_{0,t}(\cdot),z^*\?$. These maps are continuous, hence measurable. 
Also, for fixed $\mu\in \PP(\B\times\W)$, 
$\phi_\mu\col \B\times\W\to \B\times\W \times\PP(\B\times\W)\col (v,w)\mapsto (v,w,\mu)$  
is trivially $\BB(\B\times\W)/\BB(\B\times\W)\otimes \Sigma_{\B\times\W}$-measurable. 
Combining this with the measurability from Corollary \ref{cor: diffusion drift mble rep} and using the notation \eqref{eq: point evaluation} (with $T=t$), we find that for any fixed $\mu\in \PP(\B\times\W)$, $t\in\bar{I}_T$ and $z^*\in Z^*$, the following sets belong to $\BB(\B\times \W)$: $B_{\scriptscriptstyle\mathrm{M2},\mu}\ceqq ((I_{\scriptscriptstyle\mathrm{M2}}^{b,\sigma}-I_0)\circ \phi_\mu)^{-1}(\{0\})$, $B_{\scriptscriptstyle\mathrm{UMD},\mu,t}\ceqq (\pi_Z^t\circ ( I_{\scriptscriptstyle\mathrm{UMD},t}^{b,\sigma}-I_{0})\circ  \phi_\mu)^{-1}(\{0\})$, $B_{\mathrm{weak},\mu,z^*}\ceqq ((I^{b_{z^*},\sigma_{z^*}}-I_0^{z^*})\circ \phi_\mu)^{-1}(\{0\})$,    $B_{\mathrm{weak},\mu,t,z^*}^S\ceqq ((\pi_Z^t\circ I^{b_{z^*}^t,\sigma_{z^*}^t}-I_{0,t}^{z^*})\circ \phi_\mu)^{-1}(\{0\})$ and $B_{\bullet,\mu,t}^S\ceqq ((\pi_Z^t\circ  I_{\bullet,t}^{b^t,\sigma^t}-I_{0,t})\circ \phi_\mu)^{-1}(\{0\})$ for $\bullet=\mathrm{M2},\mathrm{UMD}$. 
It remains to put
\begin{align*}
&
A_{\scriptscriptstyle\mathrm{M2}}
\ceqq \{\mu\in\PP(\B\times \W):\mu(B_{\scriptscriptstyle\mathrm{M2},\mu})=1\}, \\ 
&
A_{\scriptscriptstyle\mathrm{UMD}}
\ceqq \cap_{t\in\bar{I}_T}\{\mu\in\PP(\B\times \W):\mu(B_{\scriptscriptstyle\mathrm{UMD},\mu,t})=1\}, \\
&
A_{\mathrm{weak}}
\ceqq \cap_{z^*\in D}\{\mu\in\PP(\B\times \W):\mu(B_{\mathrm{weak},\mu,z^*})=1\}, \\
&
A_{\scriptscriptstyle\mathrm{M2}}^S
\ceqq \cap_{t\in\bar{I}_T}\{\mu\in\PP(\B\times \W):\mu(B_{\scriptscriptstyle\mathrm{M2},\mu,t}^S)=1\},  \\
&
A_{\scriptscriptstyle\mathrm{UMD}}^S
\ceqq \cap_{t\in\bar{I}_T}\{\mu\in\PP(\B\times \W):\mu(B_{\scriptscriptstyle\mathrm{UMD},\mu,t}^S)=1\}, \\
&
A_{\mathrm{weak}}^S
\ceqq \cap_{t\in\bar{I}_T,z^*\in D}\{\mu\in\PP(\B\times \W):\mu(B_{\scriptscriptstyle\mathrm{weak},\mu,t,z^*}^S)=1\}.
\end{align*} 
\end{proof}  

\subsection{Proof of the main results}\label{sub:main results}

Since Theorem \ref{th:YW} already provides the core of the Yamada--Watanabe--Engelbert theorem, it remains to define $S_1, S_2, \Gamma,\CC$ and $\nu$ in such a way, that the set $\SS_{\Gamma,\CC,\nu}\subset \PP(S_1\times S_2)$ consists precisely of all laws of $\cond$-weak solutions, for a given collection of conditions $\cond$.  
From now on, let
\begin{equation}\label{eq: S1 S2}
S_1\ceqq \B, \qquad S_2\ceqq Z\times \W.
\end{equation} 
Recall that we identify $\B$ with a subspace of $C(\bar{I}_T;Z)$ since $\B\into C(\bar{I}_T;Z)$ by Assumption \ref{ass}. 
For $s\in \bar{I}_T$, we define   
\begin{equation}\label{eq: def pi_1 pi_2}
\pi^s_{1}\col\B\to Z\col v\mapsto v(s),\quad \pi^s_{2}\col S_2\to Z\times\R^\infty\col (x,w)\mapsto (x,w(s)).
\end{equation} 
For $t\in\bar{I}_T$ and $i=1,2$, we define a temporal compatibility structure:
\begin{equation}\label{eq:temporal}
  \BB_t^{S_i}\ceqq \sigma(\pi^s_{i}:s\in \bar{I}_t), \quad  \CC\ceqq\{(\BB_t^{S_1},\BB_t^{S_2}):t\in \bar{I}_T\}.
\end{equation}
Lastly, for any measurable $u\col (\Om,\F)\to S_1$, $\eta=(u_0,W)\col(\Om,\F)\to S_2=Z\times \W$, we let 
\begin{align}\label{eq:F_t^eta} 
\begin{split}
&\F_t^u\ceqq \overline{\sigma(u^{-1}(B):B\in\BB_t^{S_1})}= \overline{\sigma(u(s)^{-1}(A):A\in\BB(Z),s\in\bar{I}_t))},\\
&\F_t^\eta\ceqq \overline{\sigma(\eta^{-1}(B):B\in\BB_t^{S_2})}
=\overline{\sigma((u_0,(W^k(s))_{k\in\N}):s\in\bar{I}_t)},\\
&\F_t^W\ceqq\overline{\sigma((W^k(s))_{k\in\N} : s\in\bar{I}_t))}.   
\end{split}
\end{align} 
The notations $\F_t^u$ and $\F_t^\eta$  are consistent with Definition \ref{def:compatible} ($\A=\bar{I}_T$).

\begin{remark}
From  Lemma \ref{lem: sigma algs W and W_1 new}\ref{it:W (e_k)-indep}, we find that $(\F_t^{W})$ is independent of the chosen orthonormal basis $(e_k)$, even though $(W^k)$ is defined through $(e_k)$. 
\end{remark}

Observe that $\BB_t^{S_i}$ contains all information of paths up till time $t$. This will in particular ensure that weak solutions for $(\Gamma,\CC,\nu)$ become adapted processes with respect to suitable filtrations. Conversely, SPDE solutions are automatically $\CC$-compatible (`$\com$', see Definition \ref{def:compatible}), as the next lemma shows.  
The proof is a special case of the computation in \cite[p.\ 7]{kurtz14}. A bit more detail is added for convenience. The independent increments of ($U$-cylindrical) Brownian motions are used in a crucial way here. Similar arguments can be given for L\'evy processes, but then, $\B$ should embed continuously  into a  Skorohod space, rather than into $C(\bar{I}_T;Z)$. 

\begin{lemma}\label{lem:SPDE sol is compatible}
  Let Assumption \ref{ass} hold. 
  Let $(\Om,\F,\P,(\F_t))$ be a filtered probability space, let $u\col I_T\times\Om\to Z$ be measurable,  $(\F_t)$-adapted and such that $u(\cdot,\om)\in\B$ for all $\om\in\Om$. Let $W$ be a $U$-cylindrical Brownian motion on $(\Om,\F,\P,(\F_t))$. Then $u\com(u(0),W)$, where  $\CC$ is defined by \eqref{eq:temporal} and $W=(W^k)$ through \eqref{eq: def Rinfty BM}.  
\end{lemma}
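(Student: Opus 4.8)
The goal is to verify the compatibility relation \eqref{eq:compatible} for $\xi=u$ and $\eta=(u(0),W)$: for every bounded Borel $h\col Z\times\W\to\R$ and every $t\in\bar I_T$,
\[
\E\big[h(u(0),W)\,\big|\,\F_t^u\vee\F_t^\eta\big]=\E\big[h(u(0),W)\,\big|\,\F_t^\eta\big].
\]
Writing $W=(W^k)$, note that $\BB(\W)$ is generated by the coordinate evaluations $w\mapsto w_k(s)$, so by a monotone class / $\pi$–$\lambda$ argument it suffices to treat $h$ of the product form $h(x,w)=g(x)\prod_{k=1}^n\varphi_k\big(w_k(s_1),\dots,w_k(s_N)\big)$ with $g,\varphi_k$ bounded continuous and $s_1,\dots,s_N\in\bar I_T$; and since both sides are linear and continuous in $h$, it further suffices to split the time points into those $\le t$ and those $>t$. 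The part of $h$ depending only on $u(0)$ and on increments $W^k(s)$ with $s\le t$ is $\F_t^\eta$-measurable and factors out of both conditional expectations, so the whole problem reduces to showing: for any bounded Borel $\psi$ of the future increments $\big(W^k(s)-W^k(t)\big)_{k\le n,\ t<s}$,
\[
\E\big[\psi\mid \F_t^u\vee\F_t^\eta\big]=\E[\psi]\qquad\text{a.s.}
\]

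The key input is the structure of $(\F_t^u)$ and $(\F_t^\eta)$. Since $u$ is $(\F_t)$-adapted with paths in $\B\into C(\bar I_T;Z)$, the path $\big(u(s)\big)_{s\le t}$ is $\F_t$-measurable, whence $\F_t^u\subset\overline{\F_t}$; likewise, by Definition \ref{def: cylindrical BM}\ref{it:bm2} and the construction of $(W^k)$ around \eqref{eq: def Rinfty BM}, the increments $\big(W^k(s)\big)_{s\le t}$ are $\overline{\F_t}$-measurable, so $\F_t^\eta\subset\overline{\F_t}$ (here $u(0)$ is $\F_0\subset\F_t$-measurable). Hence $\F_t^u\vee\F_t^\eta\subset\overline{\F_t}$. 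On the other hand, the future increments $\big(W^k(s)-W^k(t)\big)_{t<s}$ are, by Definition \ref{def: cylindrical BM}\ref{it:bm3} (applied to $f=\one_{(t,s]}\otimes e_k$ and finite linear combinations, which determine all of $W$ restricted to $[t,\infty)$), independent of $\overline{\F_t}$. Therefore $\psi$ of these increments is independent of $\overline{\F_t}\supset\F_t^u\vee\F_t^\eta$, and the displayed identity follows immediately, since conditioning an independent random variable on a $\sigma$-algebra returns its expectation.

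Assembling: for general product $h$ we factor $h(u(0),W)=\Xi\cdot\Psi$ with $\Xi$ being $\F_t^\eta$-measurable (built from $g(u(0))$ and the increments up to time $t$) and $\Psi=\psi\big((W^k(s)-W^k(t))_{t<s}\big)$ independent of $\overline{\F_t}$. Then
\[
\E\big[h(u(0),W)\mid\F_t^u\vee\F_t^\eta\big]=\Xi\,\E\big[\Psi\mid\F_t^u\vee\F_t^\eta\big]=\Xi\,\E[\Psi]
=\Xi\,\E\big[\Psi\mid\F_t^\eta\big]=\E\big[h(u(0),W)\mid\F_t^\eta\big],
\]
using in the first and last steps that $\Xi$ is measurable w.r.t.\ both $\sigma$-algebras, and in the middle the independence just established (the same computation with $\F_t^u\vee\F_t^\eta$ replaced by $\F_t^\eta$ gives $\E[\Psi\mid\F_t^\eta]=\E[\Psi]$). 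A monotone class argument in $h$ over the $\pi$-system of such products extends this to all bounded Borel $h\in\BM(S_2,\BB(S_2))$, which is exactly \eqref{eq:compatible}, i.e.\ $u\com(u(0),W)$.

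The main obstacle — really the only non-bookkeeping point — is the reduction step: isolating, inside a generic bounded Borel functional $h$ of the whole Brownian path, a factor that is genuinely $\overline{\F_t}$-measurable and a complementary factor of the strictly future increments that is independent of $\overline{\F_t}$. This is where one must be careful that $\BB(\W)$ is generated by coordinate evaluations (so the product functionals form a point-separating $\pi$-system, by Lemma \ref{lem: generating maps} and the completely Hausdorff property of $\R^\infty$), that the increment decomposition $W^k(s)=W^k(s\wedge t)+\big(W^k(s)-W^k(s\wedge t)\big)$ respects measurability by \ref{it:bm2}, and that the independence in \ref{it:bm3} transfers to the completed $\sigma$-algebra $\overline{\F_t}$ (which it does, since adding null sets preserves independence). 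Everything else is the standard monotone-class wrap-up.
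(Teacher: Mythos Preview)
Your approach is essentially the same as the paper's: decompose $\eta=(u(0),W)$ into a ``past'' part that is $\F_t^\eta$-measurable and a ``future increments'' part that is independent of $\overline{\F_t}\supset\F_t^u\vee\F_t^\eta$, and use this to show the two conditional expectations agree. The paper packages this more cleanly: it writes $\eta=g(\eta_t,\eta^t)$ for a single Borel map $g$ (via \cite[Lem.~1.14]{kallenberg21}), and then applies the freezing lemma $\E[h\circ g(\eta_t,\eta^t)\mid\G]=\int h\circ g(\eta_t,y)\,\P\#\eta^t(\ddd y)$ in one line for both $\G=\F_t^u\vee\F_t^\eta$ and $\G=\F_t^\eta$, avoiding any monotone-class reduction.

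There is a small gap in your ``assembling'' step: for $h(x,w)=g(x)\prod_k\varphi_k(w_k(s_1),\dots,w_k(s_N))$ with some $s_j>t$, you cannot in general factor $h(u(0),W)=\Xi\cdot\Psi$ with $\Xi$ $\F_t^\eta$-measurable and $\Psi$ a function of future increments alone, because $\varphi_k(\dots,W^k(s_j),\dots)=\varphi_k(\dots,W^k(t)+(W^k(s_j)-W^k(t)),\dots)$ mixes past and future inside a single $\varphi_k$. Two easy fixes: either restrict your $\pi$-system to exponentials $\varphi_k(x)=e^{i\lambda x}$ (where the factorization is genuine), or---as the paper does---drop the product factorization entirely and apply the freezing lemma to the pair $(\eta_t,\eta^t)$.
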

\begin{proof}
 Let $\eta\ceqq(u(0),W)\col\Om\to Z\times \W$. 
 We will decompose $\eta$ into an $\F_t^\eta$-measurable part $\eta_t$, and an $\F_t^\eta$-independent part $\eta^t$. 
  For $t\in \bar{I}_T$, put $R_t\ceqq Z\times C(\bar{I}_t;\R^\infty)$, $R^t\ceqq C(\bar{I}_{T-t};\R^\infty)$ and let   $\eta_t\ceqq (u(0),W|_{\bar{I}_t})\col \Om\to R_t$ and $\eta^t\ceqq (W(t+\cdot)-W(t))\big|_{\bar{I}_{T-t}}\col\Om\to  R^t$ (with $\infty-t\ceqq\infty$). 
  Applying Lemma \ref{lem: generating maps} with (time) point evaluation maps on $R_t$ and $R^t$, and using that $W$ is a $U$-cylindrical Brownian motion with respect to $(\F_t)$ and $u$ is $(\F_t)$-adapted, we find 
   \begin{align}
   &\sigma(\eta)=\sigma(\eta_t,\eta^t), \label{eq: eta split1} \\
   &\sigma(\eta_t)
   \subset \F_t^\eta\subset \F_t^u\vee\F_t^\eta, \label{eq: eta split2} \\
   &\sigma(\eta^t)
   \indep \overline{\F_t}\supset \F_t^u\vee\F_t^\eta\supset \F_t^\eta. \label{eq: eta split3}   
   \end{align}     
The independence in \eqref{eq: eta split3} holds by Lemma \ref{lem:sol on completion} and  measurability of the (continuous) restriction $\cdot|_{\bar{I}_{T-t}}\col C(\R_+;\R^\infty)\to R^t$. 
By \eqref{eq: eta split1}, there exists a Borel measurable map $g\col R_t\times R^t\to S_2$ such that $\eta=g(\eta_t,\eta^t)$ \cite[Lem.\ 1.14]{kallenberg21}. Combining with \eqref{eq: eta split2} and \eqref{eq: eta split3} yields for all $h\in\BM(S_2,\BB(S_2))$:
\begin{align*}
   \E[h(\eta)|\F_t^u\vee\F_t^\eta] =\E[h\circ g(\eta_t,\eta^t)|\F_t^u\vee\F_t^\eta] 
   = \int_{R^t}h\circ g(\eta_t,w) \P\#\eta^t (\ddd w) 
   &= \E\big[ h\circ g(\eta_t, \eta^t)  |\F_t^\eta\big]\\
   &= \E\big[ h(\eta)  |\F_t^\eta\big],
\end{align*}
where we use that $h\circ g(\eta_t,\eta^t)$ has one measurable component and one independent component, for the $\sigma$-algebras $\F_t^u\vee\F_t^\eta$ and $\F_t^\eta$. 
\end{proof} 

The following lemma shows how joint compatibility may be of use. Recall that $\wien$ is the law of a continuous $\R^\infty$-Brownian motion (see Remark \ref{rem:wien measure}). 
\begin{lemma}\label{lem:joint compatible indep incr}
Let Assumption \ref{ass} hold. Let $(\Om,\F,\P)$ be a probability space. 
 Let  $u_0\col\Om\to Z$ and $W\col \Om\to \W$ be random variables with  $\law(u_0,W)=\mu\otimes\wien$. Put $\eta\ceqq(u_0,W)$. Then:
\begin{enumerate}[label=\textup{(\roman*)},ref=\textup{(\roman*)}]
  \item \label{it:joint compatible indep incr1}
  $W$ is an $\R^\infty$-Brownian motion with respect to $(\Om,\overline{\F},\overline{\P},(\F_t^\eta))$.  
\end{enumerate}

In addition, let $\CC$ be defined by \eqref{eq:temporal} and let $u,\wtu\col \Om\to \B$ be random variables such that $(u,\wtu)\com \eta$. Then:
\begin{enumerate}[label=\textup{(\roman*)},ref=\textup{(\roman*)},resume]
\item \label{it:joint compatible indep incr2} $\F_s^u\vee\F_s^{\wtu} \vee\F_s^\eta \indep W(t)-W(s)\col\Om\to\R^\infty$ for all $0\leq s\leq t\in\bar{I}_T$. 
    \end{enumerate}
\end{lemma}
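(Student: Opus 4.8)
\textbf{Plan for the proof of Lemma \ref{lem:joint compatible indep incr}.}

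For part \ref{it:joint compatible indep incr1}, the plan is to verify the three defining properties of an $\R^\infty$-Brownian motion with respect to $(\F_t^\eta)$ (Definition \ref{def: sequence independent BM}): that each $W^k$ is a standard real-valued Brownian motion, that the $W^k$ are jointly independent, and that each $W^k$ is adapted to $(\F_t^\eta)$ with independent increments relative to this filtration. The law hypothesis $\law(u_0,W)=\mu\otimes\wien$ immediately gives that $\law(W)=\wien$, so $(W^k)$ is a continuous $\R^\infty$-Brownian motion in its own filtration by Remark \ref{rem:wien measure}; in particular all distributional requirements hold. Adaptedness is trivial from the definition $\F_t^\eta=\overline{\sigma((u_0,(W^k(s))_{k}):s\in\bar{I}_t)}$. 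The only real point is the independence of increments: I need $W(t)-W(s)\indep\F_s^\eta$ for $s\le t$. Here I would use that $\F_s^\eta=\overline{\sigma(u_0,(W^k(r))_{k,r\in\bar I_s})}$, that $W(t)-W(s)$ is $\sigma(W)$-measurable with $\sigma(W)$ generating, together with $u_0$, the whole of $\F^\eta$, and that under $\mu\otimes\wien$ the variable $u_0$ is independent of $W$ entirely. Since increments of $W$ past time $s$ are already independent of $\sigma(W^k(r):k,r\le s)$ (property of $\wien$), adjoining the independent factor $u_0$ preserves this, by a standard $\pi$-system/product-measure argument. Passing to the completion $\overline{\F}$ does not affect independence.

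For part \ref{it:joint compatible indep incr2}, the strategy is to combine the joint $\CC$-compatibility hypothesis $(u,\wtu)\com\eta$ with the independent-increment structure of $W$ just established. Fix $0\le s\le t$ in $\bar I_T$. The increment $\Delta\ceqq W(t)-W(s)\col\Om\to\R^\infty$ is, by part \ref{it:joint compatible indep incr1}, independent of $\F_s^\eta$. The task is to upgrade this to independence from the larger $\sigma$-algebra $\F_s^u\vee\F_s^{\wtu}\vee\F_s^\eta$. The key is that $\Delta$ is measurable with respect to $\sigma(W(t)-W(s))\subset\sigma(\eta^s)$ in the notation of Lemma \ref{lem:SPDE sol is compatible}, i.e. it is a function of the ``future noise'' part of $\eta$. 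More precisely, I would write $h(\eta)$ for $\R^\infty$-measurable bounded functions of $\Delta$ — strictly, approximate bounded continuous functions of finitely many coordinates of $\Delta$, each of which is a bounded measurable function of $\eta=(u_0,W)$ — and apply the joint compatibility identity \eqref{eq:compatiblejoint} with $\alpha=s$:
\[
\E[h(\eta)\mid\F_s^u\vee\F_s^{\wtu}\vee\F_s^\eta]=\E[h(\eta)\mid\F_s^\eta].
\]
Since $h(\eta)$ depends only on $\Delta=W(t)-W(s)$, which is $\F_s^\eta$-independent by part \ref{it:joint compatible indep incr1}, the right-hand side equals the constant $\E[h(\eta)]$. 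Hence $\E[h(\eta)\mid\F_s^u\vee\F_s^{\wtu}\vee\F_s^\eta]=\E[h(\eta)]$ for all such $h$, which is exactly the statement that $\Delta$ is independent of $\F_s^u\vee\F_s^{\wtu}\vee\F_s^\eta$. One passes from cylinder functions of $\Delta$ to all Borel sets of $\R^\infty$ by a monotone-class / $\pi$-system argument, using that $\R^\infty$ is generated by coordinate projections (cf. Remark \ref{rem:wien measure}).

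\textbf{Main obstacle.} The only slightly delicate bookkeeping is the reduction of ``$\Delta$ is a bounded measurable function of $\eta$'' so that \eqref{eq:compatiblejoint} applies verbatim with $h\in\BM(S_2,\BB(S_2))$: one must check that $w\mapsto$ (a bounded continuous function of finitely many coordinates of $w(t)-w(s)$) is $\BB(\W)$-measurable and bounded, which is clear since these are continuous maps $\W\to\R$, and that the family of such functions separates enough of $\sigma(\Delta)$ to conclude full independence — this is handled by the standard fact that $\BB(\R^\infty)$ is generated by the coordinate maps and that independence is characterized on a generating $\pi$-system. Part \ref{it:joint compatible indep incr1} is the load-bearing input and is essentially immediate from $\law(u_0,W)=\mu\otimes\wien$ plus the properties of $\wien$; everything else is routine measure theory. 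No heavy machinery beyond Lemma \ref{lem: generating maps} and the $\pi$-system lemma is needed.
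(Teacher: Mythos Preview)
Your proposal is correct and follows essentially the same approach as the paper: part \ref{it:joint compatible indep incr1} uses $\law(W)=\wien$ and $u_0\indep W$ to get $W(t)-W(s)\indep\F_s^\eta$, and part \ref{it:joint compatible indep incr2} plugs $h(\eta)=\one_B(W(t)-W(s))$ into the joint compatibility identity \eqref{eq:compatiblejoint} and then uses \ref{it:joint compatible indep incr1} to replace the resulting $\F_s^\eta$-conditional expectation by a constant. The only simplification relative to your plan is that there is no need to approximate by continuous cylinder functions: for any $B\in\BB(\R^\infty)$ the map $(x,w)\mapsto\one_B(w(t)-w(s))$ already lies in $\BM(S_2,\BB(S_2))$, so \eqref{eq:compatiblejoint} applies directly and a single monotone-class step on $\F_s^u\vee\F_s^{\wtu}\vee\F_s^\eta$ finishes.
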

\begin{proof}
\ref{it:joint compatible indep incr1}: Since $\law(u_0,W)=\mu\otimes\wien$, we have $u_0\indep W$ and $\law(W)=\wien$. Therefore, $W$ has the distributional properties of an $\R^\infty$-Brownian motion  and thus also the same independence structure with respect to $(\F_t^W)$. In particular, $W(t)-W(s)\indep \F_s^W$ for all $0\leq s<t$.  
Moreover,  $u_0\indep W$ gives $\sigma(W(t)-W(s))\vee \F_s^W\indep u_0$. 
Combining these yields $\sigma(W(t)-W(s))\indep \F_s^W\vee\sigma(u_0)$ and recalling \eqref{eq:F_t^eta}, we find: 
\begin{equation}\label{eq:indepGamma2}
  W(t)-W(s)\indep \F_s^\eta \text{ for all }0\leq s<t. 
\end{equation}
Since $\R^\infty\to\R\col x\mapsto x_k$  is measurable, \eqref{eq:indepGamma2}   implies that  $W^k(t)-W^k(s)\indep \F_s^\eta$, so each $W^k$ is a standard real-valued  $(\F_t^\eta)$-Brownian motion. We conclude that  \ref{it:joint compatible indep incr1} holds. 

\ref{it:joint compatible indep incr2}:
Write $\F_s\ceqq \F_s^u\vee\F_s^{\wtu} \vee\F_s^\eta$. 
For all $B\in\BB(\R^\infty)$, $A_1\in\F_s^{u}$, $A_2\in\F_s^{\wtu}$ and $D\in\F_s^\eta$, we have 
\begin{align*}
  \E[\one_B(W(t)-W(s))\one_{A_1}\one_{A_2}\one_D] &=\E[\E[\one_B(W(t)-W(s))\one_{A_1}\one_{A_2}\one_D|\F_s]]\notag\\
  &=\E[\one_{A_1}\one_{A_2}\one_D\E[\one_B(W(t)-W(s))|\F_s]]\notag\\
  &=\E[\one_{A_1}\one_{A_2}\one_D\E[\one_B(W(t)-W(s))|\F_s^\eta]]\notag\\
  &=\E[\one_B(W(t)-W(s))]\E[\one_{A_1}\one_{A_2}\one_D]. 
\end{align*}
Here, joint  $\CC$-compatibility is used in the third line and \eqref{eq:indepGamma2} is used in the fourth line.  A  monotone class argument now yields $W(t)-W(s)\indep \F_s$. 
\end{proof}

We now prove the Yamada--Watanabe--Engelbert theorem for SPDEs, Theorem \ref{th:YW SPDE}. 
Recall Definition \ref{def: SPDE C-sol} for the notions of solution and uniqueness. 

\begin{proof}[\textbf{Proof of Theorem \ref{th:YW SPDE}}]
Let us write  $\eq\ceqq$ \eqref{SPDE} if $\cond$ is a subcollection of conditions \ref{it:sol integrability}--\ref{it:as cond}, and   $\eq\ceqq$  \eqref{SPDE mild} if $\cond$ is a subcollection of conditions \ref{it:moment cond}--\ref{it:integrated eq mild weak}.   
Throughout the proof we identify $U$-cylindrical Brownian motions $W$ with $(W^k)$ through \eqref{eq: def Rinfty BM}, recalling that $(e_k)$ is fixed by Assumption \ref{ass}. 
We will apply Theorem \ref{th:YW} with a suitable choice of $(\Gamma, \CC,\nu)$. 

Let  $S_1, S_2, \BB_t^{S_i}, \CC$ be as in \eqref{eq: S1 S2}, \eqref{eq:temporal} and put
\begin{equation*} 
   \nu\ceqq\mu\otimes \wien.
\end{equation*} 
Now we define  $\Gamma\subset \PP(S_1\times S_2)$ in such a way that `$\law(u\one_\B(u),(u(0),W))\in\Gamma$' ensures that the conditions in $\cond$ are satisfied. Let $B$, $D$, $D_t^\gamma$ be as in Lemma \ref{lem: mble L^p subsets} and let $B_{z^*}$, $D_{z^*}$, $B_{t}^S$, $D_{t}^{S}$, $D_{t}^{\gamma,S}$, $B_{z^*,t}^S$, $D_{z^*,t}^S$ be as in Corollary \ref{cor: mble L^p subsets}. 
Let $\Pi_{1}\col S_1\times S_2\to \B\times \W\col (v,(x,w))\mapsto v$, let $\Pi_{2}\col S_1\times S_2\to \B\times \W\col (v,(x,w))\mapsto x$ and let $\Pi_{13}\col S_1\times S_2\to \B\times \W\col (v,(x,w))\mapsto (v,w)$. Note that these maps are Borel measurable. 
Put
\begin{equation}\label{eq: def Gamma} 
\Gamma\ceqq \Gamma_0\cap \bigcap_{(k)\in \cond, \, 1\leq k\leq 12}\Gamma_{(k)}, 
\end{equation} 
\begin{itemize}
\item $\Gamma_0\ceqq\{\tilde{\mu}\in\PP(S_1\times S_2):\tilde{\mu}(\pi_1^0\circ\Pi_1=\Pi_2)=1\}$,
\item $\Gamma_{\ref{it:sol integrability}}\ceqq\{\tilde{\mu}\in\PP(S_1\times S_2):\Pi_1\#\tilde{\mu}(B\cap D)=1\}$, 
\item $\Gamma_{\ref{it:sol integrability UMD}}\ceqq\cap_{t\in\bar{I}_T}\{\tilde{\mu}\in\PP(S_1\times S_2):\Pi_1\#\tilde{\mu}(B\cap D_t^\gamma)=1\}$, 
\item $\Gamma_{\ref{it:sol integrability ana weak}}\ceqq\cap_{z^*\in D}\{\tilde{\mu}\in\PP(S_1\times S_2):\Pi_1\#\tilde{\mu}(B_{z^*}\cap D_{z^*})=1\}$, 
\item $\Gamma_{\ref{it:sol integrability mild}}\ceqq\cap_{t\in \bar{I}_T}\{\tilde{\mu}\in\PP(S_1\times S_2):\Pi_1\#\tilde{\mu}(B_{t}^{S}\cap D_{t}^{S})=1\}$, 
\item $\Gamma_{\ref{it:sol integrability mild UMD}}\ceqq\cap_{t\in \bar{I}_T}\{\tilde{\mu}\in\PP(S_1\times S_2):\Pi_1\#\tilde{\mu}(B_{t}^{S}\cap D_{t}^{\gamma,S})=1\}$, 
\item $\Gamma_{\ref{it:sol integrability mild weak}}\ceqq\cap_{t\in \bar{I}_T,z^*\in D}\{\tilde{\mu}\in\PP(S_1\times S_2):\Pi_1\#\tilde{\mu}(B_{z^*,t}^S\cap D_{z^*,t}^S)=1\}$, 
\item for $(k)=$\ref{it:integrated eq},\ref{it:integrated eq ana weak},\ref{it:integrated eq mild},\ref{it:integrated eq mild weak}: $\Gamma_{(k)}\ceqq\{\tilde{\mu}\in\PP(S_1\times S_2):\Pi_{13}\#\tilde{\mu}\in A_k\}$,  where $A_k\subset\PP(\B\times \W)$ denotes the relevant set of Corollary \ref{cor:SPDE distributional unified},
\item $\Gamma_{\ref{it:moment cond}}\ceqq \{\tilde{\mu}\in\PP(S_1\times S_2):\int_{\B} |f|^p\dd (\Pi_1\#\tilde{\mu})<\infty\}$,  
\item $\Gamma_{\ref{it:as cond}}\ceqq \{\tilde{\mu}\in\PP(S_1\times S_2): \Pi_1\#\tilde{\mu}(E)=1\}$. 
   \end{itemize}  
   
Now, the actual proof will be given in Steps 1--5 below. 
Recall that $\SS_{\Gamma,\CC,\nu}\subset \PP(S_1\times S_2)$ is defined by \eqref{eq: def Kurtz set}.   See Definition \ref{def: kurtz solution} for the notions of weak and strong solutions for $(\Gamma,\CC,\nu)$. In Steps 1--3, we relate these to the $\cond$-weak and $\cond$-strong solutions of  Definition \ref{def: SPDE C-sol}. In Steps 4--5, we derive the theorem. 
In what follows, we use the notations $\F_t^u,\F_t^\eta,\F_t^W$ from \eqref{eq:F_t^eta}. 

\vspace{.15cm}

\textbf{Step 1} (Role of $\nu$)

\noindent
For any random variable  $\eta=(u_0,W)\col \Om\to Z\times \W$ with $\law(\eta)=\nu$, we have $\law(u_0)=\mu$ and $W$ is an $\R^\infty$-Brownian motion with respect to $(\Om,\overline{\F},\overline{\P},(\F_t^\eta))$, due to Lemma \ref{lem:joint compatible indep incr}\ref{it:joint compatible indep incr1}. 

Conversely,  if $u_0\in L^0_{\F_0}(\Om;Z)$, $\law(u_0)=\mu$ and $W$ is a $U$-cylindrical Brownian motion with respect to $(\F_t)$, then Lemma \ref{lem:sol on completion} gives $\sigma(u_0)\subset \F_0 \indep W(0+\cdot)-W(0)$. As $W(0+\cdot)-W(0)=W$ a.s., we conclude that $u_0\indep W$, hence  $\law(u_0,W)=\mu\otimes{\wien}=\nu$.     

\vspace{.15cm}

\textbf{Step 2} (Relations between weak solutions)

\noindent
For any probability space $(\Om,\F,\P)$ and measurable $(u,(u_0,W))\col\Om\to S_1\times S_2$, we have: 
\begin{align}\label{eq:weaksol2}
\begin{split}
&\law(u,(u_0,W))\in \SS_{\Gamma,\CC,\nu}\implies \\
&\qquad\quad (u,W,(\Om,\overline{\F},\overline{\P},(\F_t^u\vee\F_t^W)))  \text{ is a }\cond\text{-weak solution to $\eq$ with }\law(u(0))=\mu. 
\end{split}
\end{align} 
Indeed, suppose that $\law(u,(u_0,W))\in\SS_{\Gamma,\CC,\nu}$.   
Trivially, $u$ and $W$ are $(\F_t^u\vee\F_t^W)$-adapted. 
Moreover,  $\law(u_0,W)=\nu$, so $\law(u_0)=\mu$ and $\Gamma_0$ yields $u(0)=u_0$ a.s., hence  $\law(u(0))=\mu$. 
By construction of $\Gamma$, the conditions in $\cond$ hold if   $W$ is a $U$-cylindrical Brownian motion with respect to $(\Om,\overline{\F},\overline{\P},(\F_t^u\vee\F_t^W))$. It remains to prove the latter. 
By Step 1, $W$ is an $\R^\infty$-Brownian motion with respect to $(\Om,\overline{\F},\overline{\P},(\F_t^\eta))$,  thus it extends uniquely to a $U$-cylindrical Brownian motion $W$ on $(\Om,\overline{\F},\overline{\P},(\F_t^\eta))$ satisfying \eqref{eq: def Rinfty BM} (see the lines below \eqref{eq: def Rinfty BM}). 
It suffices to check that $W(t)-W(s)\indep \F_s^u\vee\F_s^\eta$. But this follows from the $\CC$-compatibility $u\com (u_0,W)$ and application of Lemma \ref{lem:joint compatible indep incr}\ref{it:joint compatible indep incr2}  with  $\tilde{u}=u$.  
This completes the proof of \eqref{eq:weaksol2}.  

Conversely, we have:
\begin{align}\label{eq:weaksol}
\begin{split}
&(u,W,(\Om,\F,\P,(\F_t))) \text{  is a $\cond$-weak solution to $\eq$ with } \law(u(0))=\mu \implies\\
&  \qquad\quad \law(\bar{u},(\bar{u}(0),W))\in \SS_{\Gamma,\CC,\nu} \text{ for $\bar{u}\ceqq u\one_{\B}(u)$}.
\end{split}
\end{align}
If $(u,W,(\Om,\F,\P,(\F_t)))$ is a $\cond$-weak solution to $\eq$ and $\law(u(0))=\mu$, then we have $\law({u}(0),W)=\nu$ by Step 1. Furthermore,  
$\law(\bar{u},(\bar{u}(0),W))\in \Gamma$ by construction of $\Gamma$ and $\bar{u}\com (\bar{u}(0),W)$ by Lemma \ref{lem:SPDE sol is compatible}. Hence, $\law(\bar{u},(\bar{u}(0),W))\in \SS_{\Gamma,\CC,\nu}$ as claimed. 

\vspace{.15cm}

{\textbf{Step 3} (Coincidence of strong solutions and uniqueness notions) }

\noindent
We show the following:  
\begin{enumerate}[label=\textup{(\Roman*)},ref=\textup{(\Roman*)}]  
\item \label{it:strongsol} For any filtered probability space $(\Om,\F,\P,(\F_t))$, for any measurable $u\col \Om\to S_1$ with $u(0)\in L^0_{\F_0}(\Om;Z)$ and for any $U$-cylindrical Brownian motion $W$ on $(\Om,\F,\P,(\F_t))$:  

\noindent
    $(u,W,(\Om,\overline{\F},\overline{\P},(\overline{\F_t})))$ is a $\cond$-strong solution to $\eq$ with $\law(u(0))=\mu$  
    
    \hspace{3.2cm}$\iff$ $(u,(u(0),W))$ is a strong solution for $(\Gamma,\CC,\nu)$.  
\item \label{it:path} Pathwise uniqueness holds for $\cond$-weak solutions to $\eq$ with initial law $\mu$ $\iff$ pointwise uniqueness holds in $\SS_{\Gamma,\CC,\nu}$. 
\item \label{it:jointweakunique} Joint weak uniqueness holds for $\cond$-weak solutions to $\eq$ with initial law $\mu$  $\iff$ $\#\SS_{\Gamma,\CC,\nu}\leq 1$ (\ie joint uniqueness in law holds).
\end{enumerate}  

\ref{it:strongsol} `$\Leftarrow$':  
 Since $(u,(u(0),W))$ is a strong solution for $(\Gamma,\CC,\nu)$, we have a  Borel measurable map   $F\col S_2\to S_1$ such that $F(u(0),W)=u$ a.s. It remains to show that $(u,W,(\Om,\F,\P,(\F_t)))$ is a $\cond$-weak solution to $\eq$. Since  $\law(u,(u(0),W))\in \SS_{\Gamma,\CC,\nu}\subset \Gamma$ and  $W$ is assumed to be a $U$-cylindrical Brownian motion with respect to $(\F_t)$, $\cond$ holds. 
 Moreover,  $W$ is $(\F_t)$-adapted and $u(0)$ is $\F_0$-measurable, so $\F_t^{(u(0),W)}\subset \overline{\F_t}$ (recall \eqref{eq:F_t^eta}). Also, \cite[Prop.\ 2.13]{kurtz14} gives $\F_t^u\subset \F_t^{(u(0),W)}$ since $(u,(u(0),W))$ is a strong, $\CC$-compatible solution. 
 Thus, 
 $\sigma(u(s):s\in\bar{I}_t)\subset \F_t^u\subset \F_t^{(u(0),W)}\subset \overline{\F_t}$, \ie $u$ is $(\overline{\F_t})$-adapted. We conclude that $(u,W,(\Om,\overline{\F},\overline{\P},(\overline{\F_t})))$ is a $\cond$-strong solution to $\eq$. 

\ref{it:strongsol} `$\Rightarrow$': Let $(u,W,(\Om,\overline{\F},\overline{\P},(\overline{\F_t})))$ be  a $\cond$-strong solution with initial law $\mu$ and $u=F(u(0),W)$ a.s. for some measurable $F\col S_2\to S_1$. Then by \eqref{eq:weaksol}, $(u,(u(0),W))$ is a weak solution for $(\Gamma,\CC,\nu)$, and automatically strong by the same map $F$.

\ref{it:path} `$\Rightarrow$': By Corollary \ref{cor:compatible}, it suffices to prove pointwise uniqueness for \emph{jointly} $\CC$-compatible solutions. Thus, let $(u,\eta)$ and $({\wtu},\eta)$ be solutions for $(\Gamma,\CC,\nu)$ on the same probability space $(\Om,\F,\P)$ with $(u,{\wtu})\com \eta$. We have to show that $u={\wtu}$ a.s. Write $\eta=(u_0,W)$ with random variables $u_0\col\Om\to Z$ and $W\col \Om\to \W$ and note that $u(0)=u_0=\wtu(0)$ \as due to $\Gamma_0$. 
Define $\F_t\ceqq \F_t^{u}\vee \F_t^{{\wtu}}\vee\F_t^\eta$. We show that $(u,W,(\Om,\overline{\F},\overline{\P},(\F_t)))$ is a $\cond$-weak solution, and similarly for $\wtu$. Then the assumed pathwise uniqueness yields $u={\wtu}$ a.s. as needed. Adaptedness of $u$, ${\wtu}$ and $W$ is clear by definition of $\F_t$. Also, $W$ has independent increments with respect to $(\F_t)$ by Lemma \ref{lem:joint compatible indep incr}\ref{it:joint compatible indep incr2}  and $\law(W)=\wien$, so $W$ is a $U$-cylindrical Brownian motion on  $(\Om,\overline{\F},\overline{\P},(\F_t))$. By construction of $\Gamma$, $\cond$ holds, completing the argument. 

\ref{it:path} `$\Leftarrow$': Let $(u_i,W,(\Om,\F,\P,(\F_t)))$ be a $\cond$-weak solution to $\eq$ for $i=1,2$, with $u_1(0)=u_2(0)$ a.s. and $\law(u_i(0))=\mu$. Then \eqref{eq:weaksol} gives $\law(\bar{u_i},(\bar{u_i}(0),W))\in\SS_{\Gamma,\CC,\nu}$ for $i=1,2$. Hence pointwise uniqueness in $\SS_{\Gamma,\CC,\nu}$ yields a.s. $u_1=\bar{u_1}=\bar{u_2}=u_2$.
 
\ref{it:jointweakunique} '$\Rightarrow$': Let $\mu_1,\mu_2\in\SS_{\Gamma,\CC,\nu}$. Pick any measurable $(u_i,(u_{0,i},W_{0,i}))\col \Om^i\to S_1\times S_2$ on  probability spaces $(\Om^i,\F^i,\P^i)$ such that $\mu_i=\law(u_i,(u_{0,i},W_{0,i}))$ for $i=1,2$. 
Thanks to \eqref{eq:weaksol2}, 
$(u_i,W_{0,i},(\Om^i,\overline{\F^i},\overline{\P^i}, (\F_t^{u_i}\vee\F_t^{W_{0,i}})))$  are  $\cond$-weak solutions with $\law(u_i(0))=\mu$. Joint weak uniqueness yields $\law(u_1,W_{0,1})=\law(u_2,W_{0,2})$. 
Since $\Gamma\subset\Gamma_0$, we have $u_i(0)=u_{0,i}$ a.s., thus   $\mu_1=\law(u_1,(u_1(0),W_{0,1}))=\law(u_2,(u_2(0),W_{0,2}))=\mu_2$. 
We conclude that $\#\SS_{\Gamma,\CC,\nu}\leq 1$.   

\ref{it:jointweakunique} '$\Leftarrow$': Suppose that $\#\SS_{\Gamma,\CC,\nu}\leq 1$. Let $(u_i,W_{0,i},(\Om^i,{\F^i},{\P^i}, (\F_t^i)))$ be $\cond$-weak solutions such that $\law(u_i(0))=\mu$ for $i=1,2$. By \eqref{eq:weaksol}, $\law(\bar{u_i},(\bar{u_i}(0),W_{0,i})) \in \SS_{\Gamma,\CC,\nu}$, 
thus  $\law(\bar{u_1},(\bar{u_1}(0),W_{0,1}))=\law(\bar{u_2},(\bar{u_2}(0),W_{0,2}))$ and hence $\law(u_1,W_{0,1})=\law(u_2,W_{0,2})$. 

\vspace{.15cm}

{\textbf{Step 4} (Equivalence of  \ref{it:YW1SPDE}, \ref{it:YW2SPDE} and \ref{it:YW3SPDE})}

\noindent
Using Steps 2 and 3, we  prove the equivalences claimed in the theorem. 

\ref{it:YW1SPDE}$\Rightarrow$\ref{it:YW3SPDE}: 
By \ref{it:YW1SPDE} and \eqref{eq:weaksol}, we have $\SS_{\Gamma,\CC,\nu}\neq\varnothing$. Also, \ref{it:YW1SPDE} and \ref{it:path}`$\Rightarrow$' imply that pointwise uniqueness holds in $\SS_{\Gamma,\CC,\nu}$. Thus Theorem \ref{th:YW}\ref{it:YW1Kurtz}$\Rightarrow$\ref{it:YW3Kurtz} yields that joint uniqueness in law holds, hence, by \ref{it:jointweakunique}`$\Leftarrow$', joint weak uniqueness holds. Also, Theorem \ref{th:YW}\ref{it:YW1Kurtz}$\Rightarrow$\ref{it:YW3Kurtz} yields existence of a measurable map $F\col S_2\to S_1$ such that for any measurable $\eta\col\Om\to S_2$ with $\law(\eta)=\nu$: $(F(\eta),\eta)$ is a strong solution for $(\Gamma,\CC,\nu)$. Hence, for any filtered probability space $(\Om,\F,\P, (\F_t))$,  any $U$-cylindrical Brownian motion $W$ on $(\Om,\F,\P, (\F_t))$ and any  $u_0\in  L^0_{\F_0}(\Om;Z)$ with $\law(u_0)=\mu$, we can put $\eta\ceqq(u_0,W)$, $u\ceqq F(u_0,W)$ and $F_\mu\ceqq F$, to  obtain that  $(F_{\mu}(u_0,W),(u_0,W))$ is a strong solution for $(\Gamma,\CC,\nu)$, noting that $\law(u_0,W)=\nu$ by Step 1.  
Thus, \ref{it:strongsol}`$\Leftarrow$' yields that $(F_{\mu}(u_0,W),W,(\Om,\overline{\F},\overline{\P}, (\overline{\F_t})))$ is a $\cond$-strong solution to $\eq$ with initial law $\mu$. Due to $\Gamma_0$ in the construction of $\Gamma$, we have $u(0)=F_{\mu}(u_0,W)(0)=u_0$ a.s. We conclude that 
the second (hence the first) statement of \ref{it:YW3SPDE} holds.

\ref{it:YW3SPDE}$\Rightarrow$\ref{it:YW2SPDE}: Trivial. 

\ref{it:YW2SPDE}$\Rightarrow$\ref{it:YW1SPDE}: 
By \ref{it:YW2SPDE}, there exists a $\cond$-strong solution $(u,W,(\Om,\overline{\F},\overline{\P}, (\overline{\F_t})))$, which is trivially also a $\cond$-weak solution. 
Moreover, \ref{it:jointweakunique}`$\Rightarrow$' yields joint uniqueness in law and \ref{it:strongsol}`$\Rightarrow$' yields that $(u,(u(0),W))$ is a strong solution for $(\Gamma,\CC,\nu)$. Now Theorem \ref{th:YW}\ref{it:YW2Kurtz}$\Rightarrow$\ref{it:YW1Kurtz} gives pointwise uniqueness. Then, 
\ref{it:path}`$\Leftarrow$' yields pathwise uniqueness for solutions with initial law $\mu$ and we conclude that \ref{it:YW1SPDE} holds.

\vspace{.15cm}

{\textbf{Step 5} (Proof of \ref{it:YW_Fmu}: additional measurability of $F_\mu$)}

\noindent
Let $t\in\bar{I}_T$. We show that $F_\mu$ is $\overline{\BB_t^{S_2}}^\nu/\BB_t^{S_1}$-measurable. 
By definition of $\BB_t^{S_1}$, this holds if and only if 
$\pi_1^s\circ F_\mu$ is $\overline{\BB_t^{S_2}}^\nu/\BB(Z)$-measurable for all $s\in\bar{I}_t$. 
The latter is equivalent to having for $\B^t\ceqq C(\bar{I}_t;Z)$ and  $\Pi_t\col \B\to \B^t\col u\mapsto u|_{\bar{I}_t}$: 
\begin{align}\label{eq:ts proj2}
\Pi_t\circ F_\mu \text{ is }\overline{\BB_t^{S_2}}^\nu/\BB(\B^{t})\text{-measurable}. 
\end{align}
Indeed, ${\pi}_{1,t}^s\col \B^t\to Z\col v\mapsto v(s)$ satisfies  ${\pi}_{1,t}^s(\Pi_t\circ u)=\pi_1^s(u)$ for $s\in\bar{I}_t$, and $\{{\pi}_{1,t}^s:s\in\bar{I}_t\}$ separates the points in the Polish space $\B^t$, so  Lemma \ref{lem: generating maps} gives that \eqref{eq:ts proj2} is   equivalent.

Let $\eta\col\Om\to S_2$ be a random variable on a probability space $(\Om,\F,\P)$, with $\law(\eta)=\mu\otimes \wien$. Consider $\eta_t\col \Om\to (S_2,\BB_t^{S_2})\col \om\mapsto \eta(\om)$. Note that $\eta_t$ is measurable since $\BB_t^{S_2}\subset \BB(S_2)$. Define $u\ceqq F_\mu(\eta)$, so that $(u,\eta)$ is a strong solution for $(\SS,\Gamma,\nu)$ (recall that we put $F_\mu\ceqq F$ in the proof of \ref{it:YW1SPDE}$\Rightarrow$\ref{it:YW3SPDE} above). 
Moreover, by \cite[Prop.\ 2.13]{kurtz14} and \eqref{eq:F_t^eta}:  $\sigma(\Pi_t\circ u)\subset\F_t^u\subset \F_t^\eta=\overline{\sigma(\eta_t)}$, \ie  $\Pi_t\circ u\col(\Om,\overline{\sigma(\eta_t)})\to(\B^t,\BB(\B^t))$ is measurable. As $\B^t$ is Polish, it is a Borel space \cite[Th.\ 1.8]{kallenberg21}. Hence, by first applying Lemma \ref{lem: mble version of map} and then \cite[Lem.\ 1.14]{kallenberg21}, we obtain a measurable map $F_\mu^t\col (S_2,\BB_t^{S_2})\to(\B^t,\BB(\B^t))$ such that $\Pi_t\circ u=F_\mu^t(\eta_t)$ $\P$-a.s. 
Then, since $\Pi_t\circ F_\mu(\eta)=\Pi_t\circ u\overset{\text{a.s.}}{=}F_\mu^t(\eta_t)=F_\mu^t(\eta)$, it follows that 
\begin{align}\label{eq:nu-version}
\nu(\Pi_t\circ F_\mu=F_\mu^t)=\law(\eta)(\Pi_t\circ F_\mu=F_\mu^t)=\P(\Pi_t\circ F_\mu(\eta)=F_\mu^t(\eta))=1.
\end{align}
Here we use that $\{\Pi_t\circ F_\mu=F_\mu^t\}\in\BB(S_2)$, due to the fact that $\{(v,v):v\in \B^t\}\in\BB(\B^t)\otimes\BB(\B^t)$ since $\B^t$ is a separable metric space. 
Thus by \eqref{eq:nu-version}, $F_\mu^t$ is a $\BB_t^{S_2}/\BB(\B^t)$-measurable $\nu$-version of $\Pi_t\circ F$. 
Lemma \ref{lem: mble version of map} now yields that \eqref{eq:ts proj2} holds, from which we conclude that $F_\mu$ is $\overline{\BB_t^{S_2}}^\nu/\BB_t^{S_1}$-measurable, by the earlier considerations. 
\end{proof}

Although not needed for the results in \cite{kurtz14} (and for Theorem \ref{th:YW}), one can verify that the set $\Gamma$ is in fact convex, as was required in the earlier version \cite{kurtz07}. For $\Gamma_{(k)}$ with $k=4,5,10,11$, one may use the approximations of Lemma \ref{lem:stoch int rep weak} and revisit Theorem \ref{th:stoch int rep type 2 or UMD} and Corollaries \ref{cor: diffusion drift mble rep}, \ref{cor:SPDE distributional unified} to show convexity of the sets $A_k$ (see also \cite[Ex.\ 3.18, (3.11)]{kurtz07}).

\begin{remark}\label{rem: mathbbW choice YW} 
In the notion of $\cond$-strong solution and in Theorem \ref{th:YW SPDE} above, we can replace the $\R^\infty$-Brownian motion by the $Q_1$-Wiener process $W_1$ induced by $W$ through \eqref{eq: def W_1} and replace $\W$  by $\W_1\ceqq C(\bar{I}_T;U_1)$. 

Indeed, let $\phi$ and $\phi_1$ be as in Lemma \ref{lem: sigma algs W and W_1 new} and define $G_1\ceqq(\Id_Z,\phi_1)\col Z\times\W_1\to Z\times\W$ and $G\ceqq (\Id_Z,\phi)\col Z\times\W\to Z\times\W_1$. By Lemma \ref{lem: sigma algs W and W_1 new} (and since $\BB(Z\times \W_1)=\BB(Z)\otimes \BB(\W_1)$ by separability),  
 $G$ and $G_1$ are measurable and satisfy   $(u_0,W|_{\bar{I}_T})=G_1(u_0,W_1|_{\bar{I}_T})$ \as and 
$(u_0,W_1|_{\bar{I}_T})=G(u_0,W|_{\bar{I}_T})$ a.s. Thus, the notion of $\cond$-strong solution is equivalent for $W=(W^k)$ and $W_1$ (keeping $W$ in the stochastic integrals), and Theorem \ref{th:YW SPDE} holds in the second case as well, with $(W,\W)$ replaced by $(W_1,\W_1)$ in Theorem \ref{th:YW SPDE}\ref{it:YW3SPDE}. For part \ref{it:YW3SPDE}, we can use  $F_\mu\circ G_1$ as solution map, because $\phi_1$ is constructed independently of $W$ and $W_1$ in Lemma \ref{lem: sigma algs W and W_1 new}. The proof of \ref{it:YW_Fmu} gives $\overline{\BB_t^{Z\times \W_1}}^{\mu\otimes \mathrm{P}^{Q_1}}/\BB_t^{S_1}$-measurability of this solution map, where $\mathrm{P}^{Q_1}$ denotes the $Q_1=JJ^*$-Wiener measure (see Remark \ref{rem: Q-wiener}).  
\end{remark}

The Yamada--Watanabe--Engelbert theorem (Theorem \ref{th:YW SPDE}) is stated for an arbitrary fixed initial law $\mu$ and yields a $\mu$-dependent strong solution map $F_\mu$. In contrast, the classical Yamada--Watanabe theorem \cite[Th.\ 1.1 Chap.\ 4]{ikedawatanabe81} is a statement for all fixed initial laws together and it gives a `unique strong solution' in a stronger sense: see \cite[Def.\ 1.6 Chap.\ 4, p.\ 149]{ikedawatanabe81} (SDEs) 
and \cite[Def.\ 1.9]{rock08} (SPDEs).   

We now  derive an analog of the classical Yamada--Watanabe theorem. 
Namely, we suppose that \ref{it:YW1SPDE} of Theorem \ref{th:YW SPDE} is satisfied for all initial laws $\mu$ in a certain subcollection. Then it gives that all corresponding $\CC$-weak solutions are up to null sets determined by a \emph{single, $\mu$-independent  map }$F$. Here, a space $Z_0$ is introduced, since   random initial values in $Z$ may not have enough regularity to give rise to a solution (recall Assumption \ref{ass}). Also, one may for example only consider initial laws satisfying some moment conditions, thus we state the result for $M\subset \PP(Z_0)$.  

The proof was partly inspired by \cite[Lem.\ 2.11]{rock08}. We identify  $\mu\in \PP(Z_0)$ with its trivial extension $\mu(\cdot\cap Z_0)\in \PP(Z)$ (\eg when writing $F_\mu$), using that $Z_0\into Z$ and identifying $Z_0$ with its image.  
 
\begin{corollary} \label{cor: single F} 
Let the assumptions of Theorem \ref{th:YW SPDE} hold.  
Let $Z_0$ be a topological space such that $Z_0\into Z$.  
Let $M\subset \PP(Z_0)$ be such that $\delta_{z}\in M$ for all $z\in Z_0$. Suppose that for all $\mu\in M$, \ref{it:YW1SPDE} (hence \ref{it:YW3SPDE}) is satisfied. Let $F_\mu$ be the corresponding map of \ref{it:YW3SPDE}. Then:
\begin{enumerate}[label=\emph{(\roman*)},ref=\textup{(\roman*)}]
  \item\label{it:unistrong1} For any $\cond$-weak solution $(u,W,(\Om,\F,\P, (\F_t)))$ to \eqref{SPDE}, respectively \eqref{SPDE mild}, with $\law(u(0))\in M$, we have: 
      $u=F_{\law(u(0))}(u(0),W)$ $\P$-a.s.
  \item\label{it:unistrong2} There exists a map $F\col Z_0\times \W\to \B$ such that $F(z,\cdot)$ is $\overline{\BB_t(\W)}^{{\wien}}/\BB_t(\B)$-measurable for all $z\in Z_0$ and $t\in \bar{I}_T$, and  for all $\mu\in M$, we have for $\mu$-\aev $z\in Z_0  $: 
      
      \noindent
      $F_\mu(z,\cdot)=F(z,\cdot)$ ${\wien}$-a.e.   
\end{enumerate} 
\end{corollary}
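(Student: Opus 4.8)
The plan is to prove \ref{it:unistrong1} first, since \ref{it:unistrong2} is built on it. For \ref{it:unistrong1}, fix a $\cond$-weak solution $(u,W,(\Om,\F,\P,(\F_t)))$ with $\nu_0\ceqq\law(u(0))\in M$. By Theorem \ref{th:YW SPDE}\ref{it:YW3SPDE} applied with $\mu=\nu_0$ (recall that under the hypotheses \ref{it:YW1SPDE} holds for every $\mu\in M$, hence so does \ref{it:YW3SPDE}), the pair $(F_{\nu_0}(u(0),W),W,(\Om,\overline{\F},\overline{\P},(\overline{\F_t})))$ is another $\cond$-strong solution to the same equation with the same initial value $u(0)$ a.s., living on the same filtered probability space. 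Joint weak uniqueness — which is part of \ref{it:YW3SPDE}, together with \ref{it:YW1SPDE} it gives pathwise uniqueness via the equivalence in Theorem \ref{th:YW SPDE} — then forces $u=F_{\nu_0}(u(0),W)$ $\P$-a.s. This is essentially the content of Step 3 \ref{it:path}`$\Leftarrow$' in the proof of Theorem \ref{th:YW SPDE}: two $\cond$-weak solutions on the same space with a.s.\ equal initial data coincide a.s.

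For \ref{it:unistrong2}, the idea is to \emph{glue} the maps $F_{\delta_z}$ over $z\in Z_0$. For each $z\in Z_0$ we have $\delta_z\in M$, so $F_{\delta_z}\col Z\times\W\to\B$ is the strong solution map supplied by \ref{it:YW3SPDE}; for a $U$-cylindrical Brownian motion $W$ on some $(\Om,\F,\P,(\F_t))$ and the constant $u_0\equiv z$ (which is trivially $\F_0$-measurable with $\law(u_0)=\delta_z$), $(F_{\delta_z}(z,W),W,(\Om,\overline{\F},\overline{\P},(\overline{\F_t})))$ is a $\cond$-strong solution with initial value $z$. Define
\[
F\col Z_0\times\W\to\B,\qquad F(z,w)\ceqq F_{\delta_z}(z,w).
\]
Measurability in $w$ for fixed $z$ is inherited from $F_{\delta_z}(z,\cdot)$; more precisely, \ref{it:YW_Fmu} of Theorem \ref{th:YW SPDE} (applied with $\mu=\delta_z$, noting $\delta_z\otimes\wien$ restricted to the $w$-variable gives the measure $\wien$ on $\BB_t(\W)$) yields that $F(z,\cdot)=F_{\delta_z}(z,\cdot)$ is $\overline{\BB_t(\W)}^{\wien}/\BB_t(\B)$-measurable for each $z\in Z_0$ and $t\in\bar{I}_T$. (Here one uses the slicing: since $\delta_z\otimes\wien$ has $z$-marginal a Dirac mass, $\overline{\BB_t^{S_2}}^{\delta_z\otimes\wien}$-measurability of $F_{\delta_z}$ in $(x,w)$ descends to $\overline{\BB_t(\W)}^{\wien}$-measurability of the section at $x=z$.) It remains to prove the a.s.\ identification $F_\mu(z,\cdot)=F(z,\cdot)$ for $\mu$-a.e.\ $z$, for each $\mu\in M$.

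For that identification, fix $\mu\in M$ and take a filtered probability space carrying a random variable $u_0$ with $\law(u_0)=\mu$ and an independent $U$-cylindrical Brownian motion $W$ (e.g.\ a product space); such a space exists and $\law(u_0,W)=\mu\otimes\wien$ by Step 1 in the proof of Theorem \ref{th:YW SPDE}. Set $u\ceqq F_\mu(u_0,W)$, a $\cond$-strong solution with initial law $\mu$. By part \ref{it:unistrong1} (with initial law $\mu$), $u=F_{\mu}(u_0,W)$ — trivially true — and the point is rather to compare with the constant-initial-value solution: conditioning on $u_0=z$, or more carefully using a regular conditional distribution / disintegration of $\law(u_0,W)$ over the $u_0$-coordinate, one gets that for $\mu$-a.e.\ $z$ the conditional law makes $W$ a $U$-cylindrical Brownian motion with respect to its own (completed) filtration and $(F_\mu(z,W),W)$ and $(F_{\delta_z}(z,W),W)=(F(z,W),W)$ are both $\cond$-strong solutions with initial value the constant $z$; pathwise uniqueness for initial law $\delta_z$ (which holds since $\delta_z\in M$ and \ref{it:YW1SPDE} is assumed) gives $F_\mu(z,W)=F(z,W)$ a.s., i.e.\ $F_\mu(z,\cdot)=F(z,\cdot)$ $\wien$-a.e. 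Applying Lemma \ref{lem:cond expec dist}-type reasoning (or directly the disintegration identity) then upgrades this to: $F_\mu(z,\cdot)=F(z,\cdot)$ $\wien$-a.e.\ for $\mu$-a.e.\ $z\in Z_0$, as claimed.

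The main obstacle I anticipate is the disintegration argument in the last paragraph: one must justify carefully that fixing $u_0=z$ under a regular conditional probability leaves $W$ an $(\F_t)$-(cylindrical) Brownian motion, so that $(F_{\delta_z}(z,W),W)$ and $(F_\mu(z,W),W)$ are genuine $\cond$-strong solutions on the conditioned space and pathwise uniqueness applies — this is where one wants independence $u_0\indep W$ and the Fubini-type structure of $\mu\otimes\wien$, plus a measurable-selection/Borel-space argument (as in Step 5 of Theorem \ref{th:YW SPDE}, invoking \cite[Th. 1.8, Lem. 1.14]{kallenberg21}) to make everything jointly measurable in $z$. The measurability statement for $F$ in \ref{it:unistrong2} is comparatively routine given \ref{it:YW_Fmu}, modulo the slicing remark about Dirac marginals.
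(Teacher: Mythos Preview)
Your strategy matches the paper's exactly: define $F(z,w)\ceqq F_{\delta_z}(z,w)$, obtain the claimed measurability of $F(z,\cdot)$ from \ref{it:YW_Fmu} via a Fubini slice at $x=z$, and for the identification condition on $u_0=z$ and invoke pathwise uniqueness for initial law $\delta_z$. Part \ref{it:unistrong1} is one line in the paper as well (``follows by pathwise uniqueness'').

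The obstacle you flag is precisely where the paper's effort goes. Instead of abstract disintegration, the paper builds the conditioned measure explicitly on $\Om\ceqq\B\times Z\times\W$ as $Q_z(\ddd b,\ddd\tilde z,\ddd w)\ceqq\delta_{F_\mu(\tilde z,w)}(\ddd b)\,\wien(\ddd w)\,\delta_z(\ddd\tilde z)$ and observes $Q_z=(\Id_\B,\phi^z,\Id_\W)\#Q^\mu$ for $\phi^z\equiv z$, where $Q^\mu$ is the same expression with $\mu$ replacing $\delta_z$. The crucial point is that every $\Gamma_{(k)}$ in \eqref{eq: def Gamma} \emph{except} $\Gamma_0$ depends only on the $\Pi_1$- or $\Pi_{13}$-marginal, which $(\Id_\B,\phi^z,\Id_\W)$ leaves untouched; hence $Q^\mu\in\Gamma$ transfers to $Q_z\in\Gamma_{(k)}$ immediately. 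Membership in $\Gamma_0$ (i.e.\ $F_\mu(z,\cdot)(0)=z$ $\wien$-a.s.) requires a separate Fubini argument and only holds for $\mu$-a.e.\ $z$, and $\CC$-compatibility under $Q_z$ is checked directly via Lemma \ref{lem:SPDE sol is compatible} using that the generating $\pi$-system for $\F_s^u\vee\F_s^W$ is invariant under $(\Id_\B,\phi^z,\Id_\W)^{-1}$. Once $\law(u,(u_0,W))\in\SS_{\Gamma,\CC,\nu_z}$ under $Q_z$ is established, pointwise uniqueness and an integration over $\mu$ finish. This is exactly the content your sketch defers to ``disintegration''; the verification that $(F_\mu(z,\cdot),z,\cdot)$ is a genuine $\cond$-weak solution under the conditioned law does need these steps, and the pushforward identity $Q_z=(\Id_\B,\phi^z,\Id_\W)\#Q^\mu$ is what makes them clean.
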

\begin{proof}
Claim \ref{it:unistrong1} follows by pathwise uniqueness. 

For \ref{it:unistrong2}, the following map will meet the conditions:
\[
F(z,w)\ceqq F_{\delta_z}(z,w),\qquad z\in Z_0,w\in\W.
\]
By the $\overline{\BB_t^{S_2}}^{\delta_z\otimes\wien}/\BB_t^{S_1}$-measurability of $F_{\delta_z}$ from Theorem \ref{th:YW SPDE} and by Fubini's theorem, $F(z,\cdot)$ is $\overline{\BB_t(\W)}^{{\wien}}/\BB_t(\B)$-measurable. 

Fix arbitrary $\mu\in M$. It remains to show that for $\mu$-\aev $z\in Z$: $F_\mu(z,\cdot)=F(z,\cdot)$ ${\wien}$-a.e.  Let $\nu=\mu\otimes\wien$, $\Gamma$, $\CC$ and $F_\mu$ be as in the proof of Theorem \ref{th:YW SPDE}. 
Define for $t\in\R_+$ and $z\in Z_0$:
\[
\Om\ceqq \B\times Z\times\W,\quad \F_t\ceqq \F\ceqq \BB(\Om),\quad Q_z(\ddd b,\ddd \tilde{z},\ddd w)\ceqq\delta_{F_\mu(\tilde{z},w)}(\ddd b)\wien(\ddd w)\delta_z(\ddd \tilde{z}).
\]
Let $u\col\Om\to \B$ and $(u_0,W)\col\Om\to Z\times \W$ be given by
\[
u(b,\tilde{z},w)\ceqq b, \quad u_0(b,\tilde{z},w)\ceqq \tilde{z},\quad W(b,\tilde{z},w)\ceqq w.
\]
These maps are measurable and by construction,  
\begin{align}
&\law(u_0,W)=(u_0,W)\#Q_z=\delta_z \otimes\wien\eqqc \nu_z. \label{eq:delta_z x wien}
\end{align} 
Then $\law(u_0)=\delta_z$ and Lemma \ref{lem:joint compatible indep incr}\ref{it:joint compatible indep incr1} gives that $W$ is an $\R^\infty$-Brownian motion with respect to  $(\Om,\overline{\F},\overline{Q_z},(\F_t^{(u_0,W)}))$. By Theorem \ref{th:YW SPDE}\ref{it:YW3SPDE},  
$(F_{\delta_z}(u_0,W),W,(\Om,\overline{\F},\overline{Q_z},(\F_t^{(u_0,W)})))$ is a $\cond$-strong  solution  with $F_{\delta_z}(u_0,W)(0)=u_0$ \as 
 and \eqref{eq:weaksol} gives $\law(F_{\delta_z}(u_0,W),(u_0,W))\in\SS_{\Gamma,\CC,\nu_z}$  under $\overline{Q_z}$, hence under $Q_z$, for every $z\in Z_0$.   

We show that for $\mu$-\aev $z\in Z_0$, $\law(u,(u_0,W))\in \SS_{\Gamma,\CC,\nu_z}$ holds as well. Recall \eqref{eq: def Kurtz set} for the definition of $\SS_{\Gamma,\CC,\nu_z}$. Note that \eqref{eq:delta_z x wien} implies $\law(u,(u_0,W))\in\PP_{\nu_z}(\B\times(Z\times \W))$. 
It remains to show that $\law(u,(u_0,W))\in\Gamma$ and $u\com (u_0,W)$ under $Q_z$. Let  
\[
\phi^z\col Z\to Z\col \tilde{z}\mapsto z, \qquad Q^\mu(\ddd b,\ddd \tilde{z},\ddd w)\ceqq\delta_{F_\mu(\tilde{z},w)}(\ddd b)\wien(\ddd w) \mu(\ddd \tilde{z}).
\]
We have $\delta_z=\phi^z\#\mu$, so by change of variables, 
we have for all $B\in \BB(\Om)$: 
\begin{align}
Q_z(B)=\int_\Om \one_B\delta_{F_\mu(\tilde{z},w)}(\ddd b)\wien(\ddd w)(\phi^z\#\mu)(\ddd \tilde{z})
&=\int_\Om \one_B\circ (\Id_{\B},\phi^z,\Id_{\W})\delta_{F_\mu(\tilde{z},w)}(\ddd b)\wien(\ddd w) \mu(\ddd \tilde{z})\notag\\
&=(\Id_{\B},\phi^z,\Id_{\W})\#Q^\mu(B). \label{eq: Qz Qmu}
\end{align}
Viewing $(u_0,W)$ defined above as an adapted process on $(\Om,\F,Q^\mu,(\F_t^{(u_0,W)})$ and noting that its law equals $(u_0,W)\#Q^\mu=\mu\otimes\wien=\nu$ by definition of $Q^\mu$, 
Theorem \ref{th:YW SPDE}\ref{it:YW3SPDE} and \eqref{eq:weaksol} yield  that $\law(F_{\mu}(u_0,W),(u_0,W))\in\SS_{\Gamma,\CC,\nu}$.  Hence, $Q^\mu=\law(F_{\mu}(u_0,W),(u_0,W))\in\Gamma$. Recall the definitions of $\Gamma_{(k)}$ (see \eqref{eq: def Gamma}) 
and note that by \eqref{eq: Qz Qmu}, 
\begin{align*}
&\Pi_{13}\# Q_z=(\Pi_{13}\circ(\Id_{\B},\phi^z,\Id_{\W}))\#Q^\mu=\Pi_{13}\#Q^\mu, \\
&\Pi_{1}\# Q_z=(\Pi_{1}\circ(\Id_{\B},\phi^z,\Id_{\W}))\#Q^\mu=\Pi_{1}\#Q^\mu. 
\end{align*} 
Thus, using that $Q^\mu\in\Gamma$, we find that $Q_z\in\Gamma_{(k)}$ for all $(k)\in \cond$ and all $z\in Z_0$. 

Next, we show that for $\mu$-\aev $z\in Z_0$: $Q_z\in\Gamma_0$.  
Note that $\psi\col Z\times\W\to Z\times Z\col (z,w)\mapsto (z,F_\mu(z,w)(0))$ is joint measurable and $D\ceqq \{(z,z):z\in Z\}\in \BB(Z\times Z)$ since $Z$ is Polish. 
Therefore, Fubini's theorem gives measurability of 
$$
Z\to [0,1]\col z\mapsto \int_{\W}\one_D\circ \psi(z,\cdot) \dd\wien=\wien(B_z),
$$
where $B_z\ceqq \{z=F_\mu(z,\cdot)(0)\}\in\BB(\W)$. Integrating over $Z$ and using that $Q^\mu\in \Gamma\subset\Gamma_0$ yields 
\[
\int_Z \wien(B_z)\dd\mu(z)=\int_Z\int_{\W}\one_{\{z=F_\mu(z,w)\}} \dd\wien(w)\dd\mu(z)=Q^\mu(\Pi_2=\pi_1^0\circ\Pi_1)=1. 
\]
Since $\wien(B_z)\in[0,1]$ on $Z$, we conclude that $\wien(B_z)=1$ for $\mu$-\aev $z\in Z_0$. Consequently, for $\mu$-\aev $z\in Z_0$, we have 
\begin{align*}
Q_z(\pi_0^1\circ\Pi_1=\Pi_2)&= \int_Z\int_\W\int_\B \one_{\{\pi_0^1\circ\Pi_1=\Pi_2\}}(b,\tilde{z},w) \dd\delta_{F_\mu(\tilde{z},w)}(b)\dd\wien(w)\dd\delta_z(\tilde{z})=\wien(B_z)=1,
\end{align*}
\ie $Q_z\in\Gamma_0$. 
We conclude that for $\mu$-\aev $z\in Z_0$, $(u,u_0,W)\# Q_z=Q_z\in\Gamma$.

Lastly, we prove $\CC$-compatibility under $Q_z$, for every $z\in Z_0$. We will apply Lemma \ref{lem:SPDE sol is compatible} with filtration $(\F_t^u \vee\F_t^W)$. Adaptedness of $u$ and $W$ is clear. It remains to show that $W(t)-W(s)\indep  \F_s^u \vee\F_s^W$ under $Q_z$. 
By definition of $u$ and $W$, $\F_s^u \vee\F_s^W=\overline{\G}^{Q_z}$ (recall \eqref{eq:F_t^eta}), where 
\begin{align}\label{eq: gen sigma alg}
\G \ceqq\sigma\big((\pi_1^{r_1})^{-1}(A_1)\times Z\times  (\pi_\W^{r_2})^{-1}(A_2) :0\leq r_1,r_2\leq s, A_1\in\BB(Z),A_2\in\BB(\R^\infty)\big)
\end{align}
with $\pi_\W^t$ defined by \eqref{eq: point evaluation}. 
It suffices to show that $W(t)-W(s)\indep D$ under $Q_z$ for 
\[
D=\cap_{j=1}^nD_j,\qquad D_j=(\pi_1^{r_1^j})^{-1}(A_1^j)\times Z\times  (\pi_\W^{r_2^j})^{-1}(A_2^j),
\] 
as these intersections form a $\pi$-system generating $\G$ and null sets do not affect independence.  
Since $(u,u_0,W)\#Q^\mu=Q^\mu\in\SS_{\Gamma,\CC,\nu}$, Lemma \ref{lem:joint compatible indep incr} (with $\wtu=u$) yields   $W(t)-W(s)\indep D$ under $Q^\mu$. Moreover, 
$
(\Id_\B,\phi^z,\Id_\W)^{-1}(D)=D,  
$ 
so for all $B\in\BB(\R^\infty)$  we find 
\begin{align*}
Q_z\big(\{W(t)-W(s)\in B\}\cap D\big)&= Q^\mu\big((\Id_\B,\phi^z,\Id_\W)^{-1}\big(\{W(t)-W(s)\in B\}\cap D\big)\big)\\
&=Q^\mu\big( \{W(t)-W(s)\in B\}\cap {D}\big)\\
&=Q^\mu(\{W(t)-W(s)\in B\})Q^\mu({D})\\
&=Q_z(\{W(t)-W(s)\in B\})Q_z({D}),
\end{align*}
proving that $W(t)-W(s)\indep \F_s^u \vee\F_s^W$ under $Q_z$. Lemma \ref{lem:SPDE sol is compatible} yields $u\com (u_0,W)$ under $Q_z$. 

We conclude that $\law(u,u_0,W)\in \SS_{\Gamma,\CC,\nu_z}$ under $Q_z$, for $\mu$-\aev $z\in Z_0$. 
Pointwise uniqueness (by \ref{it:path}) yields $u=F_{\delta_z}(u_0,W)$ $Q_z$-a.e.,  for $\mu$-\aev $z\in Z_0$. 
The latter and the definition of $Q_z$  
give for all $B\in\BB(\Om)$ and $\mu$-\aev $z\in Z_0$:  
\begin{align}
  Q_z(B)  =\int_\Om \one_B(u,u_0,W)\dd Q_z &=\int_\Om \one_B(F_{\delta_z}(u_0,W),u_0,W)\dd Q_z\notag\\
  &= 
  \int_Z\int_\W\int_\B \one_B(F_{\delta_z}(\tilde{z},w),\tilde{z},w) \dd\delta_{F_\mu(\tilde{z},w)}(b)\dd\wien(w)\dd\delta_z(\tilde{z})\notag\\
  &=\int_Z\int_\W \one_B(F_{\delta_z}(\tilde{z},w),\tilde{z},w) \dd\wien(w)\dd\delta_z(\tilde{z})\notag\\
  &= \int_\W \one_B(F_{\delta_z}({z},w),{z},w) \dd\wien(w). \label{eq:Q_z}
\end{align}
On the other hand, for all $B\in\BB(\Om)$ and $z\in Z_0$, we have by the definition of $Q_z$,
\begin{align*}
  Q_z(B) =\int_Z\int_\W\int_\B \one_B(b,\tilde{z},w) \dd\delta_{F_\mu(\tilde{z},w)}(b)\dd\wien(w)\dd\delta_z(\tilde{z})
  &= \int_\W \one_B(F_\mu({z},w),z,w)\dd\wien(w).
\end{align*}
Note that the integrand on the right-hand side is measurable as a map $Z\times\W\to [0,1]$. 
Thus, by Fubini's theorem and restriction to $Z_0$ (recall that $Z_0\into Z$), 
\[
Z_0\to [0,1]\col z\mapsto Q_z(B) \text{ is measurable.}
\]
Now we may integrate \eqref{eq:Q_z} with respect to $\mu$ to obtain for all $B\in\BB(\Om)$: 
\begin{align*} 
\int_{Z_0}\int_\W \one_B(F_{\delta_z}({z},w),{z},w)&  \dd\wien(w) \dd\mu(z)
=\int_{Z_0}  Q_z(B)   \dd\mu(z)\\ 
&=\int_{Z_0}\int_\W \one_B(F_\mu({z},w),z,w)\dd\wien(w) \dd\mu(z).
\end{align*}
Thus for $\mu$-\aev $z\in Z_0$, we must have $F(z,w)=F_{\delta_z}(z,w)=F_\mu(z,w)$ for $\wien$-\aev $w\in\W$. 
\end{proof}

Let us recall that several examples of applications of Theorem \ref{th:YW SPDE} and Corollary \ref{cor: single F} were already given in Example \ref{ex:applic}. To conclude, let us discuss how one can rederive the Yamada--Watanabe theory of Kunze and Ondrej\'at from our results. 

\begin{example}\label{ex:kunze setting} 
Theorem \ref{th:YW SPDE} reproves Kunze's result \cite[Th.\ 5.3, Cor.\ 5.4]{kunze13} for analytically weak solutions. 
We choose $\cond=\{\ref{it:sol integrability mild weak},\ref{it:integrated eq mild weak}\}$, $D=Z^*$.    With $E$, $\tilde{E}$, $A\in \LL(D(A),\tilde{E})$, $F$ and $G$ as in \cite[Hyp.\ 3.1]{kunze13}, we put
\begin{align*}
&Y\ceqq Z\ceqq \tilde{E}, \quad \B\ceqq C([0,T];E),\\ 
&b(t,v)\ceqq F(v(t))\in \tilde{E}, \quad \sigma(t,v)\ceqq G(v(t))\in\LL(U,Z) \qquad t\in[0,T], v\in \B.
\end{align*}
Here, we recall Remark \ref{rem: L(U,Y) valued} and Lemma \ref{lem:coeffs markov} and we use that the weak solution notion of Kunze is equivalent to our mild weak solution notion, see \cite[Def.\ 6.1, Def.\ 3.3, Prop.\ 6.3]{kunze13}. Thus Theorem \ref{th:YW SPDE} and Corollary \ref{cor: single F} apply.

Alternatively, one could also redefine   $\Gamma$ in the proof of Theorem \ref{th:YW SPDE}, using a set similar to $A_{\mathrm{weak}}$ of Corollary \ref{cor:SPDE distributional unified}  directly encoding \cite[Def.\ 3.3]{kunze13}: 
for all $z^*\in D(A^*)$, $\P$-a.s.: for all $t\in {\bar{I}_T}$,
$$\<u(t),z^*\? =\<u(0),z^*\?+\int_0^t \<u(s),A^*z^*\?\dd s+\int_0^t \<b(s,u),z^*\?\dd s+\int_0^t z^*\circ \sigma(s,u)\dd {W}(s)\,\text{ in $Z$.}
$$ 
\end{example}

\begin{example}\label{ex:ondrejat setting} 
Theorem \ref{th:YW SPDE} recovers Ondrej\'at's results \cite{ondrejat04} for mild solutions. 
Except for one minor difference explained below, Ondrej\'at's notion of a solution in \cite[p.\ 8]{ondrejat04} corresponds to a $\cond$-weak solution to \eqref{SPDE mild} with $\cond=\{\ref{it:sol integrability mild},\ref{it:integrated eq mild}\}$. Here,  $S(t,s)=S(t-s)$, $Y$ is a separable 2-smooth Banach space (in particular also M-type 2) and $b$ and $\sigma$ are of the  Markovian form \eqref{eq: b sigma markov}. 
We define our spaces $\tilde{X}$, $Y$ and $Z$ as Ondrej\'at's spaces $X$, $X$ and $X_1$, respectively. Moreover, we put $b(\cdot)=f(\cdot)$, $\sigma(\cdot)=g(\cdot)\circ Q^{1/2}$, where $Q\in \LL(U)$ is the covariance operator of the $Q$-Wiener process used for the noise in \cite{ondrejat04}. Note that $Q^{1/2}\in\LL_2(U,U_0)=\gamma(U,U_0)$  and $g$ is $\LL(U_0,X_1)$-valued, so that by the ideal property \cite[Th.\ 9.1.10]{HNVWvolume2}, $\sigma$ is $\gamma(U,X_1)$-valued. By \cite[Lem.\ 2.5]{NVW07}, $U_0$-strong measurability of $g$ (as is required in \cite{ondrejat04}) is equivalent to strong measurability of $\sigma$ (see also \cite[Note 2.6]{ondrejat04}). Lastly, note that in \cite{ondrejat04}, $S$ is $\LL(X_1,X)$-valued and $X_1$-strongly measurable, with $i\col X\into X_1$. Then, $\LL(X_1,X)\into \LL(X_1)=\LL(Z)$, so $S$ is $\LL(Z)$-valued and  \eqref{eq: mble evol fam}  holds since $i$ is Borel measurable, \ie $S$ satisfies the conditions of Definition \ref{def: evol fam}. We use Remark \ref{rem: integrability coeffs flex} to include the integrability condition \cite[(0.2)]{ondrejat04}. 

The only minor difference is that a $\cond$-weak  solution $u$ has to belong \as to $\B$, with $\B\into C(\bar{I}_T;Z)$, whereas in \cite[p.\ 8]{ondrejat04}, it is only required that $\<u,y_n^*\?$ has a continuous, adapted modification on $\bar{I}_T$ for all $n\in\N$, with  $(y_n^*)\subset Y^*$ a sequence separating the points in $Y$. 
However, $S$ is usually a strongly continuous semigroup on $X=X_1=Z$; then, if for example  $b(\cdot,u(\cdot))\in L^1(0,T;Z)$ and $\sigma(\cdot,u(\cdot))\in L^p(0,T;\guz)$ a.s.\ with $p>2$, any solution in Ondrej\'at's sense automatically has a modification with paths in $C(\bar{I}_T;Z)$, see \cite[Th.\ 4.5]{NV20maximal}.   
Moreover, if the previous integrability holds with $p=2$, then \cite[Th.\ 12(1)]{ondrejat04} gives that solutions in Ondrej\'at's sense have a modification with paths in $C([0,T];X_{-1})$. In the latter,  $X_{-1}$ is an extrapolation space defined as the completion of $X$ under  $\|x\|_{X_{-1}}\ceqq \|R_{\lambda} x\|_X$, where  $R_{\lambda}$ is the resolvent of the generator of $S$ for some $\lambda$ in the resolvent set. One can then apply our results with $Z$ redefined as $X_{-1}$. Solutions in our framework then also satisfy Ondrej\'at's   continuity condition with a sequence of functionals that separate the points in $X$, thanks to \cite[Th.\ 12(2)]{ondrejat04}.

It was also pointed out in \cite[p.\ 9]{ondrejat04} that the results in \cite{ondrejat04} remain true if one considers only adapted solutions with norm continuous paths. 

For Ondrej\'at's setting, note that $\Gamma$ in the proof of Theorem \ref{th:YW SPDE} can also be constructed directly, using \cite[Th.\ 6]{ondrejat04}  instead of  Corollaries \ref{cor: mble L^p subsets} and \ref{cor:SPDE distributional unified}, defining 
$$
\Gamma\ceqq \{\mu\in\PP(S_1\times S_1): \exists\, \cond\text{-weak solution } (u,W,(\Om,\F,\P,(\F_t))) \text{ with }\law(u,u(0),W)=\mu\}. 
$$ 
\end{example}

\appendix
\section{Measure theory}\label{appendix1}
We start by giving sufficient conditions for a collection of maps to generate the Borel $\sigma$-algebra on a Polish space. Lemma \ref{lem: generating maps} is a simple consequence of \cite[Prop.\ 1.4, Th.\ 1.2]{vakhania87}, where the result is proved for  a collection of real-valued functions $\Gamma$. Recall that completely Hausdorff (T2) spaces are topological spaces in which any two distinct points can be separated by a continuous real-valued function. This class contains all metric spaces and all locally compact Hausdorff spaces, the latter due to Urysohn's lemma. Also, recall  that a metric space is second-countable if and only if it is separable. 

\begin{lemma}\label{lem: generating maps}
Let $S$ be a Polish space and let $Y$ be a completely Hausdorff space. Let $\Gamma$ be a collection of continuous maps from $S$ to $Y$ that separate the points of $S$. Then $\BB(S)=\sigma(\Gamma)$. 
  
Let $S$ be a Polish space and let $Y$ be a  second-countable Hausdorff space. Let $\Gamma_0$ be a \emph{countable} collection of Borel measurable maps from $S$ to $Y$ that separate the points of $S$, then $\BB(S)=\sigma(\Gamma_0)$.
\end{lemma}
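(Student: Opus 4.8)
\textbf{Proof proposal for Lemma \ref{lem: generating maps}.}

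The plan is to reduce both statements to the real-valued case of \cite[Prop. 1.4, Th. 1.2]{vakhania87} by composing the given maps with separating families of continuous (respectively Borel) real-valued functions on the target space $Y$.

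For the first statement, I would proceed as follows. First, since each $g\in\Gamma$ is continuous and $\BB(S)$ is the Borel $\sigma$-algebra of a topological space, the inclusion $\sigma(\Gamma)\subset\BB(S)$ is immediate; the content is the reverse inclusion. Next, for each pair of distinct points $s,s'\in S$ pick $g\in\Gamma$ with $g(s)\neq g(s')$; since $Y$ is completely Hausdorff, there is a continuous $\varphi\col Y\to\R$ with $\varphi(g(s))\neq\varphi(g(s'))$. Let $\tilde\Gamma\ceqq\{\varphi\circ g : g\in\Gamma,\ \varphi\in C(Y;\R)\}$. Then $\tilde\Gamma$ is a collection of continuous real-valued functions on $S$ that separates the points of $S$, so \cite[Prop. 1.4]{vakhania87} gives $\BB(S)=\sigma(\tilde\Gamma)$. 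Finally, $\sigma(\tilde\Gamma)\subset\sigma(\Gamma)$ because each $\varphi\circ g$ is a composition of a $\sigma(\Gamma)/\BB(Y)$-measurable map $g$ with a Borel function $\varphi$, hence $\sigma(\Gamma)/\BB(\R)$-measurable. Combining, $\BB(S)\subset\sigma(\Gamma)$, as desired.

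For the second statement, the strategy is the same but one must be careful to keep the generating family countable. Since $Y$ is second-countable Hausdorff, fix a countable base $(V_m)_{m\in\N}$ for its topology; the indicator-type functions associated to these basic open sets separate points of $Y$, but to stay within the hypotheses of \cite[Th. 1.2]{vakhania87} I would instead note that a second-countable Hausdorff space admits a countable family $(\varphi_m)_{m\in\N}$ of Borel functions $Y\to\R$ separating its points (for instance, $\varphi_m\ceqq\one_{V_m}$ after checking these separate points, or embedding $Y$ into a metric space; the existence of a countable point-separating Borel family is the only fact needed). Then $\tilde\Gamma_0\ceqq\{\varphi_m\circ g : m\in\N,\ g\in\Gamma_0\}$ is a \emph{countable} family of Borel measurable real-valued functions on $S$ that separates the points of $S$, so \cite[Th. 1.2]{vakhania87} yields $\BB(S)=\sigma(\tilde\Gamma_0)$. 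As before $\sigma(\tilde\Gamma_0)\subset\sigma(\Gamma_0)$ since each $\varphi_m\circ g$ is $\sigma(\Gamma_0)/\BB(\R)$-measurable, and $\sigma(\Gamma_0)\subset\BB(S)$ since each $g\in\Gamma_0$ is Borel; hence $\BB(S)=\sigma(\Gamma_0)$.

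The main obstacle I anticipate is the second statement: producing a \emph{countable} point-separating family of Borel functions on a general second-countable Hausdorff $Y$, and verifying that the associated $\tilde\Gamma_0$ genuinely separates points of $S$ (not merely that each point of $Y$ is singled out). The clean way around this is to use that a second-countable Hausdorff space with a countable base $(V_m)$ has the property that for $y\neq y'$ there is some $V_m$ containing exactly one of them, so the countable family $(\one_{V_m})_{m}$ of Borel functions separates the points of $Y$; then for $s\neq s'$ in $S$, choosing $g\in\Gamma_0$ with $g(s)\neq g(s')$ and $m$ with $\one_{V_m}(g(s))\neq\one_{V_m}(g(s'))$ shows $\tilde\Gamma_0$ separates the points of $S$. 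Everything else is routine measurability bookkeeping and a direct appeal to \cite{vakhania87}.
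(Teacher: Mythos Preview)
Your argument is correct and follows essentially the same route as the paper: compose with real-valued separating functions on $Y$ (continuous for part one, indicators of a countable base for part two) to reduce to the real-valued results in \cite{vakhania87}. The only slip is that you have the two citations swapped: \cite[Th.~1.2]{vakhania87} is the statement for a (possibly uncountable) separating family of \emph{continuous} real functions, which is what you need in part one, while \cite[Prop.~1.4]{vakhania87} is the statement for a \emph{countable} separating family of Borel \emph{sets}, which is what you need in part two (applied to $\{g^{-1}(V_m):g\in\Gamma_0,\,m\in\N\}$, equivalently to your $\tilde\Gamma_0$).
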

\begin{proof}
Trivially, $\sigma(\Gamma)\subset \BB(S)$. For all pairs of distinct points $x,y\in Y$, pick a continuous map $f_{x,y}\col Y\to\R$ with $f(x)\neq f(y)$. Put $\Gamma'\ceqq\{f_{x,y}\circ\gamma:x,y\in Y, x\neq y, \gamma\in \Gamma\}$. Note that $\sigma(\Gamma')\subset \sigma(\Gamma)$ since each $f_{x,y}$ is Borel measurable. 
Also, $\Gamma'\subset C(S;\R)$ separates the points of $S$, so by \cite[Th.\ 1.2]{vakhania87}, $\BB(S)=\sigma(\Gamma')$. We conclude that $\BB(S)=\sigma(\Gamma)$. 

In the second case, again trivially, $\sigma(\Gamma_0)\subset \BB(S)$. Take a countable base $(V_k)$ for the topology on $Y$ and consider the countable collection 
\[
\mathcal{J}\ceqq\{\gamma^{-1}(V_j):\gamma\in\Gamma_0,j\in\N\}\subset \BB(S). 
\]
Since $V_j\in\BB(Y)$, we have $\sigma(\mathcal{J})\subset \sigma(\Gamma_0)$. Thus it suffices to show that $\BB(S)=\sigma(\mathcal{J})$. 

The sets in $\mathcal{J}$ separate the points of $S$. Indeed, for distinct $s,t\in S$ there exists $\gamma\in\Gamma_0$ such that $\gamma(s)\neq \gamma(t)$ in $Y$. Pick disjoint open sets $U_s,U_t\subset Y$ containing $\gamma(s)$ and $\gamma(t)$, respectively. Since $(V_k)$ is a base, there exist $k(s),k(t)\in\N$ such that $\gamma(s)\in V_{k(s)}\subset U_s$ and $\gamma(t)\in V_{k(t)}\subset U_t$. 
Now $\gamma^{-1}(V_{k(s)})\in\mathcal{J}$ and $\gamma^{-1}(V_{k(t)})\in \mathcal{J}$ indeed separate $s$ and $t$. 
By \cite[Prop.\ 1.4]{vakhania87}, it follows that $\BB(S)=\sigma(\mathcal{J})$. 
\end{proof}

The following lemma is well-known, but $X$ and $S$ are usually assumed to be complete. Therefore, we include a proof.  

\begin{lemma}\label{lem: mble version of map}
  Let $(\Om,\F,\P)$ be a probability space with completion $(\Om,\overline{\F},\overline{\P})$, let $X$ be a metric space and let  
  \[
  f\col(\Om,\overline{\F},\overline{\P})\to X
  \]
  be strongly measurable. Then there exists a strongly measurable map $g\col(\Om,{\F},{\P})\to X$ such that $f=g$ ${\P}$-a.s.

  If $S$ is a second-countable topological space and $f\col(\Om,\overline{\F},\overline{\P})\to S$ is measurable, then there exists a measurable map $g\col(\Om,{\F},{\P})\to S$ such that $f=g$ ${\P}$-a.s.
\end{lemma}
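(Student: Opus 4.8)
\textbf{Proof proposal for Lemma \ref{lem: mble version of map}.}

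The plan is to prove the two statements in parallel, since the second is essentially the ``coarse'' version of the first. First I would recall the standard fact (for \emph{complete} $X$) and then identify exactly where completeness is used, so as to remove it. The key observation is that strong measurability of $f\col(\Om,\overline{\F},\overline{\P})\to X$ means $f$ is the $\overline{\P}$-a.e.\ pointwise limit of $\overline{\F}$-simple functions $f_n=\sum_{i=1}^{N_n}x_i^{(n)}\one_{A_i^{(n)}}$ with $x_i^{(n)}\in X$ and $A_i^{(n)}\in\overline{\F}$. By definition of the completion, for each $A\in\overline{\F}$ there is $B\in\F$ with $A\,\triangle\,B$ contained in a $\P$-null set $N\in\F$ (here one uses that the completion can be described as $\overline{\F}=\{B\cup C: B\in\F,\ C\subset N\text{ for some }\P\text{-null }N\in\F\}$). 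Replacing each $A_i^{(n)}$ by such a $B_i^{(n)}\in\F$, one gets $\F$-simple functions $g_n$ agreeing with $f_n$ outside a single $\P$-null set $M\in\F$ (take $M$ the countable union of all the discrepancy null sets over $n,i$). On $\Om\setminus M$ the sequence $(g_n)$ coincides with $(f_n)$, hence converges $\overline\P$-a.e., i.e.\ converges on $\Om\setminus M'$ for some $\overline\P$-null set $M'$; enlarging once more to $M''\in\F$ with $M'\subset M''$ and $\P(M'')=0$, I define
\[
g(\om)\ceqq \begin{cases} \lim_{n\to\infty} g_n(\om), & \om\in\Om\setminus(M\cup M''),\\ x_0, & \text{otherwise},\end{cases}
\]
for an arbitrary fixed $x_0\in X$. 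Then $g$ is a $\P$-a.e.\ pointwise limit of $\F$-simple functions restricted to the $\F$-measurable set $\Om\setminus(M\cup M'')$ (and constant off it), hence strongly $\F$-measurable, and $g=f$ $\P$-a.s.\ since both equal $\lim f_n$ on the complement of a $\P$-null set.

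For the second statement, $S$ second-countable and $f\col(\Om,\overline\F,\overline\P)\to S$ measurable, the argument is a direct preimage computation. Fix a countable base $(V_k)_{k\in\N}$ for $S$. For each $k$, $f^{-1}(V_k)\in\overline\F$, so there is $B_k\in\F$ with $f^{-1}(V_k)\,\triangle\,B_k\subset N_k$ for a $\P$-null $N_k\in\F$; set $N\ceqq\bigcup_k N_k\in\F$, $\P(N)=0$. On $\Om\setminus N$ we have $f^{-1}(V_k)\cap(\Om\setminus N)=B_k\cap(\Om\setminus N)$ for every $k$. Fix $s_0\in S$ and define $g\ceqq f$ on $\Om\setminus N$ and $g\ceqq s_0$ on $N$; equivalently $g^{-1}(V_k)=(B_k\cap(\Om\setminus N))$ or $(B_k\cap(\Om\setminus N))\cup N$ according to whether $s_0\notin V_k$ or $s_0\in V_k$, which in either case lies in $\F$. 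Since $(V_k)$ generates $\BB(S)$, $g$ is $\F/\BB(S)$-measurable, and $g=f$ on $\Om\setminus N$, i.e.\ $\P$-a.s. Alternatively, one can deduce this case from the first by noting that a second-countable space need not be metrizable, so the reduction is not automatic—hence I would carry out the preimage argument directly as above rather than appealing to the strongly-measurable case.

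The only mildly delicate point—hardly an obstacle—is making sure the description of $\overline\F$ used above is the correct one: namely that $A\in\overline\F$ iff $A=B\cup C$ with $B\in\F$ and $C$ a subset of some $\P$-null set in $\F$, equivalently iff there is $B\in\F$ with $\one_A=\one_B$ $\P$-a.s.\ in the sense that $\{\one_A\ne\one_B\}$ is contained in a $\P$-null $\F$-set. This is the standard construction of the completion and I would simply invoke it. Everything else is bookkeeping: countably many null sets union to a null set, and a simple/base-preimage function that is ``corrected'' on a single $\F$-measurable null set stays measurable with respect to the uncompleted $\sigma$-algebra. I expect the write-up to be short.
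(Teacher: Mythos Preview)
Your proof is correct. The argument for the second-countable case is essentially identical to the paper's: take a countable base, adjust the preimages by null sets, and redefine $f$ on a single $\F$-measurable null set. The only notational difference is that the paper writes $f^{-1}(V_j)=A_j\,\dot\cup\,Z_j$ with $A_j\in\F$ and $Z_j$ a subset of a null set, whereas you phrase it via symmetric differences; these are equivalent formulations of the same idea.

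Where you differ is in the first statement. You prove it directly by pushing the simple-function approximation from $\overline{\F}$ down to $\F$: replace each indicator set by an $\F$-set up to a null set, collect the discrepancies into one $\F$-null set, and take the pointwise limit off that set. This is correct and self-contained. The paper instead deduces the first statement from the second: by Pettis' measurability lemma, a strongly measurable $f$ is $\overline{\F}/\BB(X)$-measurable and separably valued, so $S\ceqq\overline{f(\Om)}$ is a separable (hence second-countable) metric space, and one applies the already-proved second part with this $S$. The paper's route is shorter and highlights that the two statements are not independent, at the cost of invoking Pettis; your route avoids Pettis entirely but requires a bit more bookkeeping to ensure the modified simple functions $g_n$ are genuinely $\F$-simple (e.g.\ the $B_i^{(n)}$ need not partition $\Om$, though they do partition $\Om\setminus M$). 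Either approach is fine for the write-up.
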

\begin{proof} 
We start with the second part. 
Let $(V_k)$ be a countable basis for the topology on $S$. For each $j\in\N$ we have $f^{-1}(V_j)\in \overline{\F}$, so we can write $f^{-1}(V_j)=A_j\dotcup Z_j$ with $A_j\in\F$, $Z_j\subset Z^0_j\in\F$, $\P(Z^0_j)=0$. Put $Z\ceqq\cup_{j\in\N}Z^0_j\in \F$ and note that $\P(Z)=0$. Fix any $s_0\in S$ and let
$g(\om)\ceqq f(\om)\one_{Z^c}(\om)+s_0\one_{Z}(\om)$. 
Then $f=g$ $\P$-a.s. and it remains to show that $g$ is measurable. Any open set is a countable union of sets in $(V_k)$, so it suffices to show that $g^{-1}(V_j)\in\F$ for all $j\in\N$. If $s_0\notin V_j$, then $g^{-1}(V_j)=Z^c\cap f^{-1}(V_j)=Z^c\cap Z_j^c\cap f^{-1}(V_j)=Z^c\cap A_j\in \F$. If $s_0\in V_j$, then $g^{-1}(V_j)=Z\cup f^{-1}(V_j)=Z\cup A_j \cup Z_j=Z\cup  A_j\in \F$. 
  
The first part follows from the second part. Let $f$ be strongly measurable. By Pettis' measurability lemma \cite[Prop.\ 1.9]{vakhania87}, $f$ is $\overline{\F}/\BB(X)$-measurable and separably valued. Hence $S\ceqq \overline{f(\Om)}\subset X$ is a separable metric space (\ie second-countable). The above yields an $\F/\BB(S)$-measurable map $g_0\col\Om\to S$ with $f=g_0$ a.s., with $\BB(S)=\{A\cap S:A\in\BB(X)\}$. Thus $g\col\Om\to X,\, {g}(\om)=g_0(\om)$ is $\F/\BB(X)$-measurable and satisfies $f=g$ a.s.  
\end{proof}

For a measure space  $(S_1,\A_1,\mu_1)$ and a Banach space $E$, we let $L^0(S_1;E)$ be the vector space of strongly $\mu_1$-measurable functions, with identification of $\mu_1$-\aev equal functions. We say that a sequence $(f_k)$ \emph{converges in measure to} $f$ whenever  for all $A\in \A_1$ with $\mu_1(A)<\infty$ and for all $\delta>0$ it holds that 
\[
\lim_{k\to\infty}\mu_1(A\cap \{s\in S_1:\|f_k(s)-f(s)\|_E>\delta\})=0.
\]

\begin{remark}\label{rem:L0}
If  $\mu_1$ is $\sigma$-finite, then the above sequential convergence notion fully determines the topology of convergence in measure on $L^0(S_1;E)$ as defined in \cite[245A]{fremlin2}. In this case, the topology of convergence in measure is  completely metrizable by the metric given in \cite[Prop.\ A.2.4]{HNVWvolume1} (replace $|\cdot|$ by $\|\cdot\|_E$), and  see also \cite[245E(b)(c), 211L]{fremlin2}. Moreover, \as convergence of a sequence implies convergence in $L^0(S_1;E)$  \cite[Lem.\ A.2.2]{HNVWvolume1}.    

If $(S_1,\A_1,\mu_1)=(I_T,\BB(I_T),\ddd t)$ with $T\in (0,\infty]$ and if $E$ is a separable Banach space, then $L^0(S_1;E)$ is Polish. Separability can be proved similarly as in \cite[245Y(j), 244I,  242O(ii)]{fremlin2}. 
\end{remark} 
\begin{remark}
The notion of convergence in measure introduced above is the local version and not the global one, whose (metrizable) topology is more ill-behaved if $\mu_1(S_1)=\infty$ (see \cite[245B(c), 245Y(e)]{fremlin2}). 
\end{remark}
\begin{lemma}\label{lem:L^0}
Let $(S_1,\A_1,\mu_1)$ be a $\sigma$-finite measure space, let $(S_2,\A_2)$ be a measurable space and let $E$ be a Banach space. Let $f\col S_1\times S_2\to E$ be strongly measurable. Then $\hat{f}\col S_2\to L^0(S_1;E)\col x\mapsto f(\cdot,x)$ is strongly measurable. 
\end{lemma}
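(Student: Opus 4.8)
\textbf{Proof plan for Lemma \ref{lem:L^0}.}
The plan is to reduce the general statement to two cases that are easy to handle and then combine them via a standard approximation argument. First I would treat the case where $f$ is a simple function, i.e.\ $f = \sum_{n=1}^N \one_{A_n} e_n$ with $A_n \in \A_1 \otimes \A_2$ and $e_n \in E$. By linearity it suffices to consider $f = \one_A e$ with $A \in \A_1 \otimes \A_2$ and $e \in E$. Then $\hat f(x) = \one_{A_x} e$, where $A_x \ceqq \{s \in S_1 : (s,x) \in A\}$ is the $x$-section of $A$. The map $x \mapsto \one_{A_x} \in L^0(S_1)$ is measurable: indeed, by a monotone-class argument on the $\lambda$-system of sets $A \in \A_1 \otimes \A_2$ for which $x \mapsto \one_{A_x}$ is $\A_2/\BB(L^0(S_1))$-measurable — which contains all measurable rectangles $A_1' \times A_2'$ (for which $\hat f(x) = \one_{A_2'}(x)\one_{A_1'}$, clearly measurable) and is closed under the relevant set operations using that $\mu_1$ is $\sigma$-finite (so that $L^0(S_1)$ is completely metrizable by Remark \ref{rem:L0} and countable unions/complements of sections behave well under the metric) — we get measurability for all of $\A_1 \otimes \A_2$. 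Multiplying by the fixed vector $e$ and summing gives strong measurability of $\hat f$ for simple $f$.

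Next I would pass to the general case. Since $f$ is strongly measurable, by definition there is a sequence of simple functions $f_k \col S_1 \times S_2 \to E$ with $f_k(s,x) \to f(s,x)$ in $E$ for ($\mu_1 \otimes$ counting, or rather just pointwise) every $(s,x)$ — more precisely, strong measurability gives pointwise convergence $\mu_1\otimes\mu_2$-a.e.\ for any $\sigma$-finite $\mu_2$, but here we do not have a measure on $S_2$, so I would instead invoke the characterization that a strongly measurable function is the pointwise limit of simple functions everywhere (this is the standard definition; see e.g.\ \cite[\S1.1]{HNVWvolume1}). Then for each fixed $x \in S_2$ we have $f_k(\cdot,x) \to f(\cdot,x)$ pointwise on $S_1$, hence $\hat f_k(x) \to \hat f(x)$ in $L^0(S_1;E)$ by Remark \ref{rem:L0} (a.s.\ convergence implies convergence in measure, using $\sigma$-finiteness of $\mu_1$). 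Thus $\hat f$ is the pointwise limit of the strongly measurable maps $\hat f_k$, and since $L^0(S_1;E)$ is a metric space, a pointwise limit of strongly measurable maps into a metric space is strongly measurable. This concludes the argument.

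The main obstacle I anticipate is the rectangle-to-$\sigma$-algebra step in the simple-function case: one must verify carefully that the collection of $A \in \A_1 \otimes \A_2$ with $x \mapsto \one_{A_x} \in L^0(S_1)$ measurable is a $\sigma$-algebra (or at least a $\lambda$-system containing the $\pi$-system of rectangles). Closure under complements is immediate since $\one_{(A^c)_x} = 1 - \one_{A_x}$ and $v \mapsto 1 - v$ is a homeomorphism of $L^0(S_1)$; closure under countable \emph{disjoint} unions follows because if $A = \dotcup_n A^{(n)}$ then $\one_{A_x} = \sum_n \one_{(A^{(n)})_x}$ with the partial sums increasing pointwise to $\one_{A_x}$, hence converging in $L^0(S_1)$ (here $\sigma$-finiteness of $\mu_1$ is what makes the metric on $L^0(S_1)$ well-behaved and a.e.\ convergence imply convergence in measure). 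Once this Dynkin-system verification is in place, the rest is routine. An alternative, slicker route that avoids the monotone-class argument is to note that $L^0(S_1;E)$ with $\mu_1$ $\sigma$-finite is a separable metric space (Remark \ref{rem:L0}, when $S_1$ is nice) and to use a Carathéodory-type criterion, but the Dynkin-system approach is the most elementary and self-contained.
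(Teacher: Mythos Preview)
Your proposal is correct and follows essentially the same approach as the paper. The only cosmetic difference is that the paper applies the functional monotone class theorem to the vector space $\mathcal{H}$ of bounded real-valued $h$ on $S_1\times S_2$ with $\hat h$ strongly measurable (closed under bounded monotone limits, contains indicators of rectangles), whereas you apply the Dynkin $\pi$--$\lambda$ theorem directly to the collection of sets $A\in\A_1\otimes\A_2$ with $x\mapsto\one_{A_x}$ measurable; these two routes are interchangeable, and both then extend to $E$-valued simple functions and pass to pointwise limits exactly as you describe.
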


\begin{proof}
Observe that  $f(\cdot,x)\in L^0(S_1;E)$ for any fixed $x\in S_2$, \ie $\hat{f}$ is well-defined. 
Since $f$ is strongly measurable, we can approximate $f$ by $\A_1\otimes\A_2$-simple functions. For $A\in\A_1\otimes \A_2$, $\one_A(\cdot,x)\col S_1\to\R$ is measurable by Fubini's theorem.  By taking linear combinations in $E$ and pointwise limits, it follows that $f(\cdot,x)$ is strongly measurable, hence strongly $\mu_1$-measurable \cite[Prop.\ 1.1.16]{HNVWvolume1}. So $\hat{f}$ is well-defined. It remains to prove strong measurability of $\hat{f}$. 

First consider real-valued functions. Define
\begin{align*}
\mathcal{H}\ceqq \{ h\col S_1\times S_2\to\R\,:\;& h \text{ is bounded, measurable and }\\
&\hat{h}\col S_2\to L^0(S_1;\R)\col x\mapsto h(\cdot,x) \text{ is strongly measurable}\}.
\end{align*}
Note that $\mathcal{H}$ is a linear space which is closed under bounded increasing pointwise (on $S_1\times S_2$) convergence, since the latter implies pointwise convergence on $S_2$ in $L^0(S_1;\R)$.  
Moreover, $\mathcal{H}\supset \{\one_{A_1\times A_2}:A_i\in\A_i\}$. Indeed, $S_2\to L^0(S_1;\R)\col x\mapsto\one_{A_1\times A_2}(\cdot,x)=\one_{A_1}(\cdot)\one_{A_2}(x)$ is itself an $\A_2$-simple function. 
By the monotone class theorem, it follows that $\mathcal{H}$ contains all bounded, measurable functions  $S_1\times S_2\to\R$. In particular, $\mathcal{H}\supset\{\one_A:A\in\A_1\otimes\A_2\}$. 
Now put
\begin{align*}
\mathcal{H}'\ceqq \{ f\col S_1\times S_2\to E\,:\;& h \text{ is strongly measurable and }\\
&\hat{f}\col S_2\to L^0(S_1;\R)\col x\mapsto f(\cdot,x) \text{ is strongly measurable}\}.
\end{align*}
Note that $\mathcal{H}'\supset\{h\otimes y:h\in\mathcal{H},y\in E\}$ and $\mathcal{H}'$ is closed under linear combinations, thus $\mathcal{H}'$ contains all $\A_1\otimes\A_2$-simple functions $S_1\times S_2\to E$. 
Furthermore, $\mathcal{H}'$ is closed under pointwise convergence on $S_1\times S_2$, again since the latter implies pointwise convergence on $S_2$ in $L^0(S_1;E)$. 
Thus $\mathcal{H}'$ contains all strongly measurable $f\col S_1\times S_2\to E$, completing the proof.
\end{proof}

The following result is standard. 

\begin{lemma}\label{lem: mble composition} 
  Let $(S,\mathcal{A})$ be a measurable space, let $Y$ be a separable metric space and let $X$ be separable Banach space. Let $\Phi\col S\times Y\to X$ be such that $\Phi(s,\cdot)\col Y\to X$ is continuous for each $s\in S$. Then \ref{it: mble comp 1} and \ref{it: mble comp 2} are equivalent:
  \begin{enumerate}[label=\textup{(\roman*)},ref=\textup{(\roman*)}] 
    \item\label{it: mble comp 1} $\Phi(\cdot, y)\col S\to X$ is $\mathcal{A}/\BB(X)$-measurable for each $y\in Y$,
    \item\label{it: mble comp 2} $\Phi\col S\times Y\to X$ is $\mathcal{A}\otimes \BB(Y)/\BB(X)$-measurable,
  \end{enumerate}
 In addition, suppose that $Y$ is a Banach space. 
  If \ref{it: mble comp 1} or \ref{it: mble comp 2} holds, then
  \begin{enumerate}[resume,label=\textup{(\roman*)},ref=\textup{(\roman*)}] 
    \item\label{it: mble comp 3} if $\phi\col S\to Y$ is strongly $\mathcal{A}$-measurable, then $\Phi(\cdot,\phi(\cdot))\col S\to X$ is strongly $\mathcal{A}$-measurable.
  \end{enumerate}
\end{lemma}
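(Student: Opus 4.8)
The statement to prove is Lemma \ref{lem: mble composition}, a standard measurability result about Carathéodory-type functions. Let me sketch the proof.

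\textbf{Proof plan for Lemma \ref{lem: mble composition}.}

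The plan is to first establish the equivalence \ref{it: mble comp 1}$\Leftrightarrow$\ref{it: mble comp 2}, and then derive \ref{it: mble comp 3} as a consequence.

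For \ref{it: mble comp 2}$\Rightarrow$\ref{it: mble comp 1}: this is the trivial direction. For fixed $y\in Y$, the map $s\mapsto(s,y)$ is $\mathcal{A}/\mathcal{A}\otimes\BB(Y)$-measurable (its composition with the two coordinate projections is $\mathrm{Id}_S$ and a constant), so $\Phi(\cdot,y)$ is the composition of this with the jointly measurable $\Phi$, hence $\mathcal{A}/\BB(X)$-measurable.

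For \ref{it: mble comp 1}$\Rightarrow$\ref{it: mble comp 2}: First I would use separability of $Y$ to pick a countable dense set $\{y_n:n\in\N\}\subset Y$. For each $n\in\N$, define approximating maps by a "nearest-point" partition: fix a metric $d_Y$ on $Y$, and for $m\in\N$ partition $Y$ into Borel sets $(A_{n,m})_{n}$ where $A_{n,m}$ consists of those $y$ whose nearest point among $y_1,\dots$ within tolerance $1/m$ is $y_n$ (more precisely, set $A_{n,m}\ceqq\{y\in Y: d_Y(y,y_n)<1/m\}\setminus\bigcup_{j<n}\{y: d_Y(y,y_j)<1/m\}$, which covers $Y$ by density). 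Then define $\Phi_m\col S\times Y\to X$ by $\Phi_m(s,y)\ceqq\sum_{n\in\N}\one_{A_{n,m}}(y)\Phi(s,y_n)$. Each $\Phi_m$ is $\mathcal{A}\otimes\BB(Y)/\BB(X)$-measurable: on each "slice" $S\times A_{n,m}$ it equals $\Phi(\cdot,y_n)\circ\pi_S$, which is jointly measurable by \ref{it: mble comp 1}, and $A_{n,m}\in\BB(Y)$, so $\one_{A_{n,m}\times S}$ is jointly measurable; a countable sum of jointly measurable maps is jointly measurable (using separability of $X$ so that pointwise limits of measurable maps are measurable, via \cite[Prop. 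B.1.10]{HNVWvolume1} or Pettis). Next, for fixed $(s,y)$, choosing for each $m$ the index $n=n(m,y)$ with $y\in A_{n,m}$, we have $d_Y(y_{n(m,y)},y)<1/m\to0$, so by continuity of $\Phi(s,\cdot)$ we get $\Phi_m(s,y)=\Phi(s,y_{n(m,y)})\to\Phi(s,y)$ in $X$ as $m\to\infty$. Hence $\Phi$ is the pointwise limit of jointly measurable maps, so (using separability of $X$ again) $\Phi$ is $\mathcal{A}\otimes\BB(Y)/\BB(X)$-measurable.

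For \ref{it: mble comp 3}: assume now $Y$ is a separable Banach space and $\phi\col S\to Y$ is strongly $\mathcal{A}$-measurable, hence (Pettis, \cite[Prop. I.1.9]{vakhania87}) $\mathcal{A}/\BB(Y)$-measurable. Then $s\mapsto(s,\phi(s))$ is $\mathcal{A}/\mathcal{A}\otimes\BB(Y)$-measurable, and composing with the jointly measurable $\Phi$ from \ref{it: mble comp 2} gives that $\Phi(\cdot,\phi(\cdot))$ is $\mathcal{A}/\BB(X)$-measurable; since $X$ is separable, this is the same as strong $\mathcal{A}$-measurability by Pettis. I do not expect any real obstacle here; the only mildly delicate point is making the countable nearest-point partition genuinely Borel and genuinely covering $Y$, and invoking separability of $X$ to pass measurability through the countable sum and the pointwise limit — both are routine once set up carefully.
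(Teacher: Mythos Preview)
Your proof is correct and is indeed the standard Carath\'eodory-function argument; the paper does not actually prove this lemma, merely noting that ``the following result is standard.'' Since there is no proof in the paper to compare against, your approach stands on its own and is exactly the kind of argument one would expect here.
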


\section{Cylindrical Brownian motion}\label{appendix2}

We consider  a $U$-cylindrical Brownian motion $W$ in the sense of Definition \ref{def: cylindrical BM}. Some properties are discussed in Subsection \ref{sub:stoch integration}. In this Appendix \ref{appendix2}, we prove some additional lemmas.

We let $\F_t^W$ be as in \eqref{eq:F_t^eta}  and  $\overline{\F_t}\ceqq \F_t\cup\mathcal{N}$, where $\mathcal{N}$ is the collection of all $(\Om,\F,\P)$-null sets. In addition, we define $\bar{\F}_{t^+}\ceqq \cap_{h>0}\sigma(\overline{\F_{t+h}})$. 
The next lemma shows that $W$ is also a $U$-cylindrical Brownian motion with respect to $(\overline{\F}_{t})$, $(\bar{\F}_{t^+})$ and $({\F}_{t}^W)$. 
This is standard for real-valued Brownian motions. 
In the cylindrical case, independence aspects are slightly more subtle, so we include a proof. 
To consider $W$ as a $U$-cylindrical Brownian motion on a completed probability space, note that  $L^2(\Om,\overline{\F})\cong L^2(\Om,\F)$ (but we do not have any inclusion as sets). Indeed, for $f,g$ as in Lemma \ref{lem: mble version of map}, $\iota([f]_{\overline{\P}})\ceqq[g]_{\P}$ defines an isomorphism, where $[\cdot]_{\overline{\P}}$, $[\cdot]_\P$ denote equivalence classes of \as equal functions, so we may identify $W$ with $\iota^{-1}\circ W$ and the latter will fulfill the conditions of Definition \ref{def: cylindrical BM} with the filtrations $(\overline{\F}_{t})$, $(\bar{\F}_{t^+})$ and $({\F}_{t}^W)$. 
  
\begin{lemma}\label{lem:sol on completion} 
Let $(e_k)$ be an orthonormal basis for $U$ and let $W$ be a $U$-cylindrical Brownian motion with respect to $(\Om,{\F},{\P}, (\F_t))$. Identify $W=(W^k)$ through \eqref{eq: def Rinfty BM}. 

Then for all $s\in\R_+$:  $W(s+\cdot)-W(s)\indep \F_s$ as a $C(\R_+;\R^\infty)$-valued random variable. In particular, $W(t)-W(s)\indep \F_s$ for all $0\leq s<t$, as an $\R^\infty$-valued random variable. 

Moreover, $W$ is a $U$-cylindrical Brownian motion with respect to $(\Om,\overline{\F},\overline{\P}, (\overline{\F}_{t}))$, $(\Om,\overline{\F},\overline{\P}, (\bar{\F}_{t^+}))$ and $(\Om,\overline{\F},\overline{\P},({\F}_{t}^W))$. 
\end{lemma}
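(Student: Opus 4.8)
The statement to prove is Lemma \ref{lem:sol on completion}: first that $W(s+\cdot)-W(s)$ is independent of $\F_s$ as a $C(\R_+;\R^\infty)$-valued random variable (hence $W(t)-W(s)\indep\F_s$ for $s<t$), and second that $W$ remains a $U$-cylindrical Brownian motion with respect to the three enlarged filtrations. My plan is to reduce everything to finite-dimensional independence statements, which follow from the standard theory of $\R^n$-valued Brownian motion, and then pass to the limit.

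First I would establish the independence claim. Fix $s\in\R_+$ and set $B^k(r)\ceqq W^k(s+r)-W^k(s)$ for $r\in\R_+$, $k\in\N$. By Lemma \ref{lem: generating maps}, the $\sigma$-algebra generated by the process $(B^k)_{k\in\N}$ (viewed as a $C(\R_+;\R^\infty)$-valued random variable) is generated by the countable family of point-evaluation maps $w\mapsto w_k(r)$ with $k\in\N$, $r\in\Q_{\geq 0}$, so it suffices to show that the random vector $(B^{k_1}(r_1),\dots,B^{k_n}(r_n))$ is independent of $\F_s$ for any finite choice of indices and times. For this, pick $m\geq\max_i k_i$ and note that each increment $B^{k_i}(r_i)=W(\one_{(s,s+r_i]}\otimes e_{k_i})$ (using \eqref{eq: def Rinfty BM} and the $\P$-a.s. identification of $W^k(t)$ with $W(\one_{(0,t]}\otimes e_k)$) is, by \ref{it:bm3} of Definition \ref{def: cylindrical BM}, measurable with respect to $\sigma(W(f):\supp f\subset[s,\infty))$, which is independent of $\F_s$ by \ref{it:bm3}. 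A $\pi$-system / monotone class argument over finite cylinder sets upgrades this to independence of the full generated $\sigma$-algebra, giving $W(s+\cdot)-W(s)\indep\F_s$; the $\R^\infty$-valued special case $W(t)-W(s)\indep\F_s$ is immediate.

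Next I would handle the three filtration enlargements. Write $W$ for $\iota^{-1}\circ W$ as in the paragraph preceding the lemma, so that $W$ makes sense as a map into $L^2(\Om,\overline{\F})\cong L^2(\Om,\F)$; properties \ref{it:bm1} are purely distributional and hence unaffected. For $(\overline{\F}_t)$: condition \ref{it:bm2} is trivial since $\F_t\subset\overline{\F_t}$; for \ref{it:bm3}, if $\supp f\subset[t,\infty)$ then $Wf$ is independent of $\F_t$ by \ref{it:bm3} for the original filtration, and a random variable independent of $\F_t$ is independent of $\overline{\F_t}=\sigma(\F_t\cup\mathcal N)$ because augmenting by null sets does not change independence (this is the standard lemma that $\sigma(X)\indep\G$ implies $\sigma(X)\indep\sigma(\G\cup\mathcal N)$, proved by approximating indicators of $\overline{\F_t}$-sets by $\F_t$-sets up to null sets). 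For $({\F}_t^W)$: note $\F_t^W\subset\overline{\F_t}$ by construction (each $W^k(r)$, $r\leq t$, is $\F_t$-measurable by \ref{it:bm2}), so \ref{it:bm2} holds for $(\F_t^W)$; and \ref{it:bm3} for $(\F_t^W)$ is weaker than \ref{it:bm3} for $(\overline{\F_t})$, hence follows. For $(\bar\F_{t^+})$: \ref{it:bm2} is again trivial from $\F_t\subset\bar\F_{t^+}$. The only genuinely non-routine point is \ref{it:bm3} for $(\bar\F_{t^+})$, i.e.\ if $\supp f\subset[t,\infty)$ then $Wf\indep\bar\F_{t^+}=\cap_{h>0}\sigma(\overline{\F_{t+h}})$. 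I would argue: it suffices to treat $f$ supported in a compact $[t,N]$ and approximate by $f_h$ supported in $[t+h,N]$ for small $h>0$; each $Wf_h$ is independent of $\overline{\F_{t+h}}\supset\bar\F_{t^+}$ by the already-established $(\overline{\F}_t)$-case, and $Wf_h\to Wf$ in $L^2(\Om)$ as $h\downarrow0$ by \ref{it:bm1} (since $\|f-f_h\|_{L^2}\to0$), so $Wf$ is a limit in probability of $\bar\F_{t^+}$-independent variables and is therefore itself independent of $\bar\F_{t^+}$. A general $f\in L^2(\R_+;U)$ with $\supp f\subset[t,\infty)$ is then handled by a further $L^2$-approximation by compactly supported ones.

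\textbf{Expected main obstacle.} The delicate step is \ref{it:bm3} for the right-continuous filtration $(\bar\F_{t^+})$: one cannot directly use \ref{it:bm3} at time $t$ (that only gives independence of $\F_t$, not of $\cap_{h>0}\overline{\F_{t+h}}$, which is strictly larger in general). The shift-and-approximate trick above is the crux — it exploits that $Wf$ depends continuously (in $L^2$) on $f$, so that shrinking the support slightly costs nothing in the limit while buying independence of $\overline{\F_{t+h}}$ for every $h>0$. Everything else is bookkeeping: reducing $C(\R_+;\R^\infty)$-measurability to countably many evaluations via Lemma \ref{lem: generating maps}, and the routine fact that independence survives null-set augmentation.
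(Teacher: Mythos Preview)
Your overall strategy matches the paper's, and the shift-and-approximate argument for \ref{it:bm3} with respect to $(\bar\F_{t^+})$ is exactly what the paper does. However, two steps are not justified as written.

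\textbf{The independence step.} You assert that $\sigma(Wf:\supp f\subset[s,\infty))\indep\F_s$ ``by \ref{it:bm3}'', and then invoke a $\pi$-system argument. But \ref{it:bm3} only gives that \emph{each individual} $Wf$ is independent of $\F_s$; this does not by itself imply that finite vectors $(Wf_1,\ldots,Wf_n)$ are independent of $\F_s$, and the natural $\pi$-system for the generated $\sigma$-algebra consists precisely of events $\bigcap_i\{Wf_i\in A_i\}$, so you are assuming what you need. The fix is to exploit linearity: every linear combination $\sum a_iWf_i=W(\sum a_if_i)$ again has integrand supported in $[s,\infty)$, hence is independent of $\F_s$ by \ref{it:bm3}; matching characteristic functions then gives joint independence of the vector with $\F_s$. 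The paper takes a different route: it shows via L\'evy's characterization that $(W^1,\ldots,W^n)$ is an $n$-dimensional $(\F_t)$-Brownian motion, after which the joint independence $[B^n(s+s_1)-B^n(s);\ldots]\indep\F_s$ is standard.

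\textbf{The $(\F_t^W)$ step.} Your argument for \ref{it:bm2} is backwards: from $\F_t^W\subset\overline{\F_t}$ you cannot conclude that $Wf$ is $\F_t^W$-measurable --- the inclusion helps for \ref{it:bm3} (independence of a smaller $\sigma$-algebra is weaker), not for \ref{it:bm2} (measurability with respect to a smaller $\sigma$-algebra is stronger). The paper proves \ref{it:bm2} for $(\F_t^W)$ by noting that $W(\one_{(0,s]}\otimes e_k)$ is $\F_t^W$-measurable for $s\le t$ by definition of $\F_t^W$, and then approximating general $f$ with $\supp f\subset[0,t]$ by linear combinations of such indicators, using that $W\in\LL(L^2(\R_+;U),L^2(\Om))$.
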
 

\begin{proof} 
Let  $s\in\R_+$. 
By Lemma \ref{lem: generating maps}, we have $\BB(C(\R_+;\R^\infty))=\sigma(\Pi_n^r:n\in\N,r\in\R_+)$, with $\Pi_n^r\col C(\R_+;\R^\infty)\to\R^\infty\col w\mapsto (w_1(r),\ldots,w_n(r),0,0,\ldots)$. Writing $\eta_k^s\ceqq W^k(s+\cdot)-W^k(s)$ for $k\in\N$, it follows that
\begin{align*}
  \sigma(W(s+\cdot)-W(s))
  =\sigma\Big(\bigcap_{j=1}^N\big(\eta_1^s(s_j),\ldots,\eta_n^s(s_j)\big)^{-1}(A_j): N,n\in\N, s_j\in \R_+,A_j\in\BB(\R^n) \Big).
\end{align*}   
The intersections on the right-hand side form a $\pi$-system, thus it suffices to show that 
\begin{equation}\label{eq: pi set}
\bigcap_{j=1}^N\big(\eta_1^s(s_j),\ldots,\eta_n^s(s_j)\big)^{-1}(A_j)\indep {\F_s}.
\end{equation}
The $W^k$ are independent standard real-valued $({\F_t})$-Brownian motions, thus $B^n\ceqq (W^1,\ldots,W^n)$ is an $n$-dimensional standard $({\F_t})$-Brownian motion, as follows from L\'evy's characterization \cite[Th.\ 3.16]{karatzas98}. Hence,  
$\big[B^n(s+s_1)-B^n(s);B^n(s+s_2)-B^n(s+s_1);\ldots;B^n(s+s_N)-B^n(s+s_{N-1})\big]\indep{\F_s}$ (viewing $B^n$ as a row vector here). Putting $\phi\col \R^{N\times n}\to\R^{N\times n},\, (\phi(A))_{jk}\ceqq \sum_{i=1}^jA_{ik}$, we obtain 
\begin{align*} 
 &{            \big(\eta_k^s(s_j)\big) _{j=1} ^{N} }
  { \vphantom{ \big(\eta_k^s(s_j)\big) } } _{k=1} ^{n}
  =\big[B^n(s+s_1)-B^n(s);\ldots;B^n(s+s_N)-B^n(s)\big]\\
&\quad=\phi\big(\big[B^n(s+s_1)-B^n(s);B^n(s+s_2)-B^n(s+s_1);\ldots;B^n(s+s_N)-B^n(s+s_{N-1})\big]\big)\indep {\F_s}, 
\end{align*}
using that $\phi$ is measurable. In particular  \eqref{eq: pi set} holds, hence $W(s+\cdot)-W(s)\indep\F_s$. 
Also, we have $W(t)-W(s)=\pi_{\R^\infty}^{t-s}\circ (W(s+\cdot)-W(s))\indep \F_s$, since $\pi_{\R^\infty}^{t-s}$ defined by \eqref{eq: point evaluation} is measurable. 

Now let us prove the last part of the lemma. 
The case $(\Om,\overline{\F},\overline{\P},(\overline{\F}_{t}))$ is trivial, noting that null sets do not affect independence in Definition \ref{def: cylindrical BM}\ref{it:bm3}. 

The case $(\Om,\overline{\F},\overline{\P},(\F_t^W))$ is then   almost trivial, noting that the $\F_t^W$-measurability of Definition \ref{def: cylindrical BM}\ref{it:bm2} holds if $f=\one_{(0,s]}\otimes e_k$ with $s\leq t$ (recall \eqref{eq:F_t^eta}) and then using linearity and continuity (in $f$) of $W$ and density in $L^2(\R_+;U)$ of the linear span of such $f$. 

Regarding $(\Om,\overline{\F},\overline{\P}, (\bar{\F}_{t^+}))$, the non-trivial part is that  $Wf\indep \bar{\F}_{t^+}$ for all $f\in L^2(\R_+;U)$ with $\supp(f)\subset[t,\infty)$. For such $f$, define $f_n\ceqq f\one_{[t+1/n,\infty)}$. Since $\supp(f_n)\subset [t+1/n,\infty)$, we have $Wf_n\indep \overline{\F_{t+1/n}}\supset \bar{\F}_{t^+}$ for all $n\in\N$.  Furthermore, $f=\lim_{n\to\infty}f_n$ \aev on $\R_+$, hence in $L^2(\R_+;U)$  by the Dominated Convergence Theorem. Consequently, $Wf_n\to Wf$ in $L^2(\Om)$ and $Wf_{n_j}\to Wf$ \as for a subsequence. Let $A\in \bar{\F}_{t^+}$ and let $V\subset \R$ be open. Then  $\one_V(Wf_{n_j})\to\one_V(Wf)$ a.s., so   the Dominated Convergence Theorem gives
\[
\E[\one_V(Wf)\one_A]=\lim_{j\to\infty}\E[\one_V(Wf_{n_j})\one_A]=\lim_{j\to\infty}\E[\one_V(Wf_{n_j})]\E[\one_A]=\E[\one_V(Wf)]\E[\one_A]. 
\]
As the $\pi$-system of open sets generates $\BB(\R)$, we conclude that $Wf\indep\bar{\F}_{t^+}$. 
\end{proof} 

 Finally, we relate the $\sigma$-algebras generated by $W$ and $W_1$, when  $W_1$ is a $Q$-Wiener process of the form \eqref{eq: def W_1} induced by $W$. 

Part \ref{it:W W1} of the next lemma shows that up to null sets, $W$ and $W_1$ share the same information. 
Combined with Lemma \ref{lem: mble version of map} and \cite[Lem.\ 1.14]{kallenberg21}, this yields measurable maps $\phi$ and $\phi_1$ such that $W_1=\phi(W)$ \as and $W=\phi_1(W_1)$ a.s. Generally, $\phi$ and $\phi_1$ depend on $W$ and $W_1$ with this reasoning (see the proof of \cite[Lem.\ 1.14]{kallenberg21}). The last part of the next lemma shows that in the present case, $\phi$ and $\phi_1$ can even be constructed \emph{independently of  $W$ and $W_1$}. 
For $t\in \R_+\cup\{\infty\}$ we write  $\bar{I}_t\ceqq \overline{(0,t)}$. 

\begin{lemma}\label{lem: sigma algs W and W_1 new} 
Let $W$ be a $U$-cylindrical Brownian motion on $(\Om,\F,\P, (\F_t))$ and let $(e_k)$ be an orthonormal basis for $U$.  
Let  $W=(W^k)\col\Om\to C(\R_+;\R^\infty)$ and $W_1\col \Om\to C(\R_+;U_1)$ be defined by \eqref{eq: def Rinfty BM} and  \eqref{eq: def W_1}, respectively. For $t\in\R_+\cup\{\infty\}$, define  
\begin{align}\label{eq: def F_t^W new}
\begin{split}
&\F_t^W\ceqq \overline{\sigma((W^k(s))_{k\in\N}:s\in\bar{I}_t)},\qquad\F_t^{W_1}\ceqq \overline{\sigma(W_1(s):s\in\bar{I}_t)}. \end{split}
\end{align} 
Then, the following holds for all $t\in\R_+\cup\{\infty\}$: 
\begin{enumerate}[label=\textup{(\roman*)},ref=\textup{(\roman*)}]\setlength\itemsep{.3em}
\item \label{it:W (e_k)-indep}
${\F_t^W}=\overline{\sigma(W(\one_{(0,s]}\otimes v):s\in\bar{I}_t,v\in U)}$,  
\item\label{it:W W1} 
$\F_t^W= \F_t^{W_1}$.   
\end{enumerate}
Furthermore, there exist measurable maps $\phi\col C(\R_+;\R^\infty)\to C(\R_+;U_1)$ and $\phi_1\col C(\R_+;U_1)\to C(\R_+;\R^\infty)$, \emph{independent of $W,W_1$ and $(\Om,\F,\P, (\F_t))$}, such that
\[
W_1=\phi(W) \text{ \as in }C(\R_+;U_1),\qquad W=\phi_1(W_1) \text{ \as in }C(\R_+;\R^\infty).
\]
Such maps also exist if $\R_+$ is replaced by $[0,T]$ with $T\in(0,\infty)$.
\end{lemma}
\begin{proof} 
Using Lemma \ref{lem: generating maps} with continuous maps $\R^\infty\to\R\col x\mapsto x_k$ ($k\in\N$), we deduce   
\begin{equation}\label{eq:s k split}
  \F_t^W=\overline{\sigma(W^k(s): s\in\bar{I}_t,k\in\N)}.
\end{equation}

\ref{it:W (e_k)-indep}: `$\subset$': This follows directly from \eqref{eq:s k split} and  \eqref{eq: def Rinfty BM}.

`$\supset$':  Let $s\in\bar{I}_t$ and $v\in U$ be arbitrary. By the Dominated Convergence Theorem, we have $\one_{(0,s]}\otimes v=\sum_{j\in\N}\<e_j,v\?_U\one_{(0,s]}\otimes e_j$ in $L^2(\R_+;U)$. Thus, as   $W\in\mathcal{L}(L^2(\R_+;U);L^2(\Om))$, we find
\begin{equation}\label{eq: L2 lim}
  W(\one_{(0,s]}\otimes v)=\sum_{j\in\N}\<e_j,v\?_UW(\one_{(0,s]}\otimes e_j)
\end{equation}
in $L^2(\Om)$. In particular, the series in \eqref{eq: L2 lim} converges in probability, and hence a.s. by the It\^o--Nisio theorem \cite[Th.\ 6.4.1]{HNVWvolume2}. For each $N\in\N$, we have 
\[
\sigma\Big(\sum_{j=1}^N\<e_j,v\?_UW(\one_{(0,s]}\otimes e_j)\Big)\subset \F_t^W
\]
since $\R^N\to\R\col x\mapsto \sum_{j=1}^N\<e_j,v\?_Ux_j$ is measurable and by \eqref{eq:s k split}. Consequently, 
$
\Om_0\ceqq \{\om\in\Om:\sum_{j\in\N}\<e_j,v\?_UW(\one_{(0,s]}\otimes e_j)(\om) \text{ converges}\}\in\F_t^W,
$ 
$\P(\Om_0)=1$ and
\begin{equation}\label{eq: on Om_0}
  \sigma(\one_{\Om_0}W(\one_{(0,s]}\otimes v))=\sigma(\one_{\Om_0}\sum_{j\in\N}\<e_j,v\?_UW(\one_{(0,s]}\otimes e_j))\subset\F_t^W,
\end{equation}
where the last inclusion follows from the fact that $\one_{\Om_0}\sum_{j=1}^N\<e_j,v\?_UW(\one_{(0,s]}\otimes e_j)$ is $\F_t^W$-measurable for each $N\in\N$, thus the same holds for the pointwise limit. 
Since $\F_t^W$ contains all null sets, \eqref{eq: on Om_0} implies $\sigma(W(\one_{(0,s]}\otimes v))\subset {\F_t^W}$, and `$\subset$' follows. 

We proceed by proving the final statement, after which we derive \ref{it:W W1} as a corollary. 
Define ${\phi}\col C(\R_+;\R^\infty)\to C(\R_+;U_1)$ by 
\[
 w=(w_k)\mapsto   
\begin{cases}
  \sum_{k\in\N}w_k(\cdot) Je_k,\quad &\text{if the series converges in $C(\R_+;U_1)$,}\\
  0, &\text{otherwise.}
  \end{cases}   
\]
Then, ${\phi}$ is Borel measurable, since ${\phi}^n\col w\mapsto \sum_{k=1}^n w_k(\cdot) Je_k$ is  continuous, hence measurable,  and since ${\phi}(w)=\lim_{n\to\infty} \one_{\{({\phi}^k) \text{ converges}\}}(w){\phi}^n(w)$.  
Since the series in \eqref{eq: def W_1} converges a.s.\ in $C(\R_+;U_1)$ (see Remark \ref{rem: Q-wiener}), it follows from the definition of $\phi$ that $W_1 = \phi(W)$ a.s.

To construct $\phi_1$, we exploit a  singular value decomposition of $J$. 
Since $J$ is a Hilbert--Schmidt operator, it is compact. By \cite[Th.\ 9.2]{NeervenFA},  it has a singular value decomposition 
\begin{equation}\label{eq:sing val dec}
Jv=\sum_{j\in \N} \tau_j\<f_j,v\?_Ug_j\quad \text{in }U_1,\quad \text{ for all } v\in U,  
\end{equation} 
with $(f_j)$ an orthonormal sequence in $U$, $(g_j)$ an orthonormal sequence in $U_1$ and $\tau_j> 0$ for all $j$.  Since $J$ is injective, $(J^*J)^{1/2}$ only has strictly positive eigenvalues. Thus by \cite[Th.\ 9.2, Th.\ 9.1(1)]{NeervenFA}, we find that $\mathrm{span}\{f_j:j\in \N\}$ is dense in $U$, so $(f_j)$ is an orthonormal basis for $U$. 

For $i\in \N$ and fixed $t\in\R_+$, the \as convergence of \eqref{eq: def W_1} (which in particular holds in $U_1$ for fixed $t$) and  \eqref{eq:sing val dec}  yield a.s.
\begin{align}\label{eq:sing val dec W1}
\<W_1(t),g_i\?_{U_1}=\sum_{k\in\N}W(\one_{(0,t]}\otimes e_k) \<Je_k,g_i\?_{U_1}&=\sum_{k\in\N}\sum_{j\in \N}\tau_j\<f_j,e_k\?_U W(\one_{(0,t]}\otimes e_k)\<g_j,g_i\?_{U_1}\notag\\
&=\tau_i\sum_{k\in\N} \<f_i,e_k\?_UW(\one_{(0,t]}\otimes e_k)\notag\\
&=\tau_iW(\one_{(0,t]}\otimes f_i). 
\end{align}
In the last equality we used the \as convergence of \eqref{eq: L2 lim}. 

As $\tau_j>0$ for all $j$, 
we can define $\phi_1\col C(\R_+;U_1)\to C(\R_+;\R^\infty)$ by 
  \[
  \phi_1(w)_k\ceqq  
  \begin{cases} 
 \sum_{j\in\N}\frac{1}{\tau_j}\<e_k,f_j\?_U\<w(\cdot),g_j\?_{U_1},\quad &\text{if the series converges in $C(\R_+;\R)$,}\\
  0, &\text{otherwise.}
  \end{cases}
  \]
  The map $\phi_1$ is Borel measurable by the same argument as for $\phi$, and by \eqref{eq:sing val dec W1} we have for all $t\in\R_+$ and $k\in\N$: a.s.  
\begin{align*} 
\sum_{j\in\N}\frac{1}{\tau_j}\<e_k,f_j\?_U\<W_1(t),g_j\?_{U_1} 
= \sum_{j\in\N} \<e_k,f_j\?_U  W(\one_{(0,t]}\otimes f_j)
=W(\one_{(0,t]}\otimes e_k) =W^k(t).
\end{align*}
In the second-last equality, we used that $W\in\mathcal{L}(L^2(\R_+;U),L^2(\Om))$ and applied the It\^o--Nisio theorem in $\R$. Thus, the series on the left-hand side converges a.s.\ to $W^k(t)$ for each fixed $t$. 
The summands $\frac{1}{\tau_j}\<e_k,f_j\?_U\<W_1(\cdot),g_j\?_{U_1}$ are independent, symmetric, and $C(\R_+;\R)$-valued, so their restrictions to $[0, T]$ take values in the separable Banach space $C([0,T];\R)$. Moreover, $W^k$ is a.s.\ continuous. Therefore, the It\^o--Nisio theorem for separably valued random variables \cite[Th.\ V.2.4]{vakhania87} upgrades the pointwise-in-time convergence above to a.s.\ convergence in $C([0,T];\R)$ for every $T\in(0,\infty)$. This implies a.s.\ convergence in $C(\R_+;\R)$ (equipped with the topology of uniform convergence on compact sets), for each fixed $k$. Recalling the definition of $\phi_1$, we now conclude that $\phi_1(W_1)=W$ \as

The same constructions of $\phi$ and $\phi_1$ work when $\R_+$ is replaced by $[0,T]$. 

\ref{it:W W1}: Viewing $W|_{\bar{I}_t}$ and $W_1|_{\bar{I}_t}$ as  $C(\bar{I}_t;\R^\infty)$- and $C(\bar{I}_t;U_1)$-valued random variables respectively, it holds that
  \begin{equation}\label{eq:F_t^W equiv}
  \F_t^{W}=\overline{\sigma(W|_{\bar{I}_t})},\quad \F_t^{W_1}= \overline{\sigma(W_1|_{\bar{I}_t})}.
  \end{equation}
Indeed, for $S=\R^\infty$ and $S= U_1$, the continuous maps  $\pi^s_{S,t}\col C(\bar{I}_t;S)\to S\col w\mapsto w(s)$, $s\in\bar{I}_t$  separate the points in $C(\bar{I}_t;S)$, hence $\BB(C(\bar{I}_t;S))=\sigma(\pi^s_{S,t}:s\in \bar{I}_t)$ by Lemma \ref{lem: generating maps}.  
Recalling that $\pi^s_{\R^\infty,t}\circ W=W(s)$ and $\pi^s_{U_1,t}\circ W_1=W_1(s)$ yields \eqref{eq:F_t^W equiv}. \ref{it:W W1} now immediately follows from the measurable maps $\phi$ and $\phi_1$ (with $\R_+$ replaced by $[0,t]$). 
\end{proof}

\printbibliography

\end{document}